\documentclass[11 pt,reqno]{amsart}
\usepackage{amssymb,amsmath,amstext,amsgen,amsfonts,amsthm,verbatim,hyperref,fullpage,enumitem,graphicx}
\usepackage[usenames,dvipsnames]{color}
\usepackage{amsrefs}

\newcommand{\N}{\mathbb{N}}
\newcommand{\R}{\mathbb{R}}

\newcommand{\Z}{\mathbb{Z}}

\renewcommand{\H}{\mathbb{H}}

\newcommand{\g}{\mathfrak{g}}

\renewcommand{\epsilon}{\varepsilon}
\renewcommand{\phi}{\varphi}
\renewcommand{\hat}{\widehat}
\renewcommand{\tilde}{\widetilde}

\newcommand{\cB}{\mathcal{B}}

\newcommand{\cF}{\mathcal{F}}
\newcommand{\cG}{\mathcal{G}}
\newcommand{\Hd}{\mathcal{H}}

\newcommand{\bT}{\mathbb{T}}

\DeclareMathOperator{\diam}{diam}

\DeclareMathOperator{\Center}{Center}

\newtheorem{theorem}{Theorem}[section]
\newtheorem{proposition}[theorem]{Proposition}
\newtheorem{lemma}[theorem]{Lemma}
\newtheorem{corollary}[theorem]{Corollary}

\theoremstyle{remark}

\begin{document}

\title{Stratified $\beta$-numbers and traveling salesman in Carnot groups}
\author{Sean Li}
\date{\today}
\subjclass[2010]{Primary 28A75, 53C17}
\keywords{Heisenberg group, Traveling Salesman Theorem, Jones $\beta$ numbers, curvature}
\address{Department of Mathematics, University of Connecticut, Storrs, CT 06269}
\email{sean.li@uconn.edu}

\begin{abstract}
  We introduce a modified version of P. Jones's $\beta$-numbers for Carnot groups which we call {\it stratified $\beta$-numbers}.  We show that an analogue of Jones's traveling salesman theorem on 1-rectifiability of sets holds for any Carnot group if we replace previous notions of $\beta$-numbers in Carnot groups with stratified $\beta$-numbers.  As we generalize both directions of the traveling salesman theorem, we get a characterization of subsets of Carnot groups that lie on finite length rectifiable curves.  Our proof expands upon previous analysis of the Hebisch-Sikora norm for Carnot groups.  In particular, we find new estimates on the drift between almost parallel line segments that take advantage of the stratified $\beta$'s and also develop a Taylor expansion technique of the norm.  We also give an example of a Carnot group for which a traveling salesman theorem based on the unmodified $\beta$-numbers must exhibit a gap between the necessary and sufficient directions.
\end{abstract}

\maketitle

\section{Introduction}

The analyst's traveling salesman problem asks under what conditions does a subset $E$ of a metric space $X$ lies on a finite length rectifiable curve.  Whether there is a solution for any subset $E$ depends heavily on the geometry of the metric space $X$.  In 1990, Peter Jones gave a solution to the problem in $\R^2$ \cite{Jones-TSP} via the introduction of the so-called $\beta$-numbers.

Given an arbitrary subset $E \subset \R^2$ and a ball $B(x,r)$, we define
\begin{align}
  \beta_E(x,r) := \inf_L \sup_{z \in E \cap B(x,r)} \frac{d(z,L)}{r} \label{e:og-beta}
\end{align}
Here, the infimum is taken over affine lines $L \subset \R^2$.  We see that $\beta_E(x,r)$ is a number in $[0,1]$ that can be thought of as the minimal (rescaled) radius tube that contains $E$ in $B(x,r)$ and so measures how close $E$ lies to some affine line.

Given the notion of $\beta$-numbers, Jones proved the following theorem, which is now known as the traveling salesman theorem.

\begin{theorem}[\cite{Jones-TSP}] \label{th:jones}
  For a set in $\R^2$, we define
  \begin{align*}
    \beta(E) := \int_0^\infty \int_{\R^2} \beta_E(x,r)^2 ~dx \frac{dr}{r^2}.
  \end{align*}
  Then $E \subset \R^2$ lies on a finite length rectifiable curve if and only if $\diam(E) + \beta(E)$ is finite.  More precisely, there exists a universal constant $C > 1$ so that:
  \begin{enumerate}[label=(\arabic*)]
    \item If $\Gamma$ is any curve containing $E$ then $\beta(E) + \diam(E) \leq C\Hd^1(\Gamma)$,
    \item If $\beta(E) +\diam(E) < \infty$, then there exists a curve $\Gamma \supset E$ for which $\Hd^1(\Gamma) \leq C (\beta(E) + \diam(E))$.
  \end{enumerate}
\end{theorem}

The original theorem in Jones's paper has $\beta$ expressed in terms of a sum over dyadic cubes of $\R^2$ (and $\beta$ is also expressed in terms of cubes), but it is well known that the sum over cubes is equivalent to the integral over balls up to absolute multiplicative constants.  It is also important to note that the 2 in the exponent of $\beta^2$ comes from the power type of the modulus of convexity of $\R^n$ whereas the 2 in the exponent of the $r^2$ is simply the Hausdorff dimension of $\R^2$.

In order for the term $\int_0^\infty \int_{\R^2} \beta_E(x,r)^2 ~dx ~r^{-2} dr$ to be finite, one must have that $\beta_E(x,r)$ is small for ``most'' balls in the sense that the singular integral over $x$ and $r$ is finite.  Thus, one can view this as a quantitative version of Rademacher's theorem, which says rectifiable curves have tangents almost everywhere.  Note that there are two directions for the first statement of the theorem.  The ``only if'' direction is the necessary direction and the ``if'' direction is the sufficient.

Since Jones's original paper, the traveling salesman theorem has been generalized to higher dimensional Euclidean spaces $\R^d$ \cite{Ok-TSP} and even Hilbert space $\ell_2$ \cite{Schul-TSP}.  See \cite{bishop-peres, pajot} for more context and additional applications of these theorems. Recently, researchers have even studied variants of the analyst's traveling salesman problem for H\"older curves \cite{BNV}, although we still do not have a complete picture.

As for addressing the analyst's traveling salesman problem in non-Euclidean spaces, much of the effort has been in the setting of Carnot groups \cite{CLZ}, and in particular the Heisenberg group \cite{FFP, juillet, li-schul-1, li-schul-2}, although there has also been work done in general metric spaces \cite{hahlomaa, schul, david-schul-2} and certain fractal spaces \cite{david-schul}.

We will give an overview of Carnot groups in the following section.  For now, we simply say that they are a special class of nilpotent Lie groups that are topologically just some $\R^n$ and the reader can view as (possibly) noncommutative versions of vector spaces.  Importantly, they contain a distinguished set of lines called the {\it horizontal lines} that will be the analogue of affine lines in $\R^n$.  Due to the presence of these horizontal lines, one can define a na\"ive analogue of the $\beta$ numbers by simply infimizing over horizontal, rather than affine, lines:
\begin{align*}
  \beta_E(x,r) = \inf_L \sup_{z \in E \cap B(x,r)} \frac{d(z,L)}{r}.
\end{align*}
As mentioned above, $L$ are horizontal lines and $d$ will be any metric induced by a homogeneous norm (to be defined later).  The ball $B(x,r)$ is a ball as defined by the metric $d$.

Results in Carnot groups have been partial.  For the Heisenberg group $\H$ (the simplest nonabelian Carnot group) there has been an almost tight characterization.

\begin{theorem}[\cite{li-schul-1,li-schul-2}] \label{th:li-schul}
  Let $E \subset \H$ be a subset of the first Heisenberg group.  Define
  \begin{align*}
    \beta_p(E) := \diam(E) + \int_0^\infty \int_\H \beta_E(x,r)^p ~dx \frac{dr}{r^4}.
  \end{align*}
  If $E$ lies on a finite length rectifiable curve, then $\beta_4(E) < \infty$.  On the other hand, if there is any $p < 4$ so that $\diam(E) + \beta_p(E) < \infty$, then $E$ lies on a finite length rectifiable curve.
\end{theorem}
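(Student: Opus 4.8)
The plan is to prove the two implications separately; no two-sided comparison survives, because there is a genuine gap between $p=4$ and $p<4$. Throughout I pass to the standard dyadic reformulation of $\gamma_p$: for each $n$ fix a maximal $2^{-n}$-separated net $X_n\subset E$, and replace $\gamma_p(E)$ by the comparable quantity $\diam E+\sum_n\sum_{x\in X_n}\beta_E(x,C2^{-n})^{p}\,2^{-n}$; the equivalence is a routine Fubini-and-covering computation using that balls $B(x,r)$ in $\H$ have Haar measure comparable to $r^{4}$ (this is the source of the $r^{4}$ in the integral). I will also freely replace the Carnot--Carath\'eodory distance by a comparable smooth homogeneous norm of Hebisch--Sikora type, so that second-order expansions are available. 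The one structural fact I lean on repeatedly is that, in exponential coordinates, a horizontal line projects to a Euclidean line in the horizontal $\R^{2}$ with $t$-coordinate an affine function of arclength along that projection, the affine slope being determined by the projected line; moreover a horizontal curve has Carnot--Carath\'eodory length equal to the Euclidean length of its horizontal projection; and two horizontal lines over parallel but distinct projected lines \emph{drift} apart in $t$ at a linear rate --- the distance between them at scale $L$ being of order $\sqrt{\epsilon L}$ rather than $\epsilon$, where $\epsilon$ is their horizontal separation.

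\emph{Necessary direction.} Assume $E\subset\Gamma$ with $\mathrm{length}(\Gamma)=\ell<\infty$; since $\beta_E\le\beta_\Gamma$ and $\diam E\le\ell$ it suffices to show $\sum_n\sum_{x\in X_n}\beta_\Gamma(x,C2^{-n})^{4}\,2^{-n}\le C\ell$. Parametrize $\Gamma$ at constant speed; it is then a horizontal curve, and its horizontal projection $\pi\Gamma$ is a Euclidean curve of length $\le\ell$. The engine is a \emph{local estimate}: if on a ball $B=B(x,r)$ the subarc $\Gamma\cap B$ lies at distance $\ge\eta r$ from every horizontal line, then $\Gamma$ spends, inside a fixed dilate of $B$, length at least $c\,\eta^{4}\,r$ in excess of the appropriate straight-line baseline. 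The fourth power --- in place of the Euclidean second power --- is the crux: when $\pi\Gamma$ is nearly straight on $B$, the net area it sweeps relative to the best Euclidean line is $\le C\delta r^{2}$ with $\delta r$ that flatness, which forces $\Gamma$ to within Carnot--Carath\'eodory distance $\lesssim\sqrt{\delta}\,r$ of a horizontal line; so if $\Gamma$ is genuinely $\eta r$-far from horizontal lines then $\delta\gtrsim\eta^{2}$, and one must then exhibit a length deficit $\gtrsim\delta^{2}r\gtrsim\eta^{4}r$ coming from the curvature of $\pi\Gamma$ and the vertical drift it induces. Making this rigorous is exactly where the second-order expansion of the homogeneous norm along near-horizontal directions is used. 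The local estimates are then summed over the dyadic net by a corona/stopping-time decomposition \`a la Schul: scales and balls on which $\beta_\Gamma$ stays below a small threshold are grouped into flat trees, on which $\Gamma$ behaves like a small perturbation of a single horizontal line and the bookkeeping reduces (after projection) to a Euclidean-type estimate in the spirit of Theorem~\ref{th:jones} and its higher-dimensional analogues; the remaining non-flat balls each carry a definite amount of length and contribute at most $C\ell$ in total by a packing argument. The corona step is genuinely needed: a naive projection to $\R^{2}$ over-counts badly, since many $2^{-n}$-separated points of $X_n$ can sit in one thin vertical tube and project near a single point.

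\emph{Sufficient direction.} Assume $\gamma_p(E)<\infty$ for some $p<4$, and run the Jones traveling-salesman construction adapted to $\H$. Inductively build curves $\Gamma_n$, each a finite concatenation of horizontal segments with $X_n\subset\Gamma_n$, where $\Gamma_{n+1}$ refines $\Gamma_n$ by, in each ball $B(x,C2^{-n})$ with $x\in X_{n+1}$, excising a near-geodesic subarc of $\Gamma_n$ and splicing in a short polygonal horizontal path that follows the $\beta$-optimal horizontal line $L_B$ for $B$ to within $O(\beta_E(x,C2^{-n})\,2^{-n})$ and threads the $O(1)$ points of $X_{n+1}$ meeting $B$. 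The length accounting is more delicate than in $\R^{n}$: horizontal segments do not concatenate into geodesics, and the optimal lines at scales $n$ and $n+1$, while both nearly flat on $B$, differ by a small horizontal angle together with a small vertical intercept, so splicing them and threading the new points incurs, besides the Euclidean-type contribution of the horizontal projections, a \emph{drift} contribution governed by the $\sqrt{\epsilon L}$ phenomenon above. One shows, via the same second-order norm expansion, that the extra length added at scale $n$ in ball $B$ is controlled by a suitable power of $\beta_E$ (essentially $\beta_E^{4}$, paid for by the coarser-scale structure, plus a genuinely worse drift correction), and summing over all balls and scales and applying H\"older against the scale weights bounds $\sum_n\big(\mathrm{length}(\Gamma_{n+1})-\mathrm{length}(\Gamma_n)\big)$ by a constant times $\gamma_p(E)+\diam E$. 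It is precisely in making the resulting geometric series converge that one needs $p<4$ rather than the endpoint $p=4$. Hence the $\Gamma_n$ have uniformly bounded length; a subsequence converges uniformly (Arzel\`a--Ascoli) to a curve $\Gamma$, lower semicontinuity of length gives $\mathrm{length}(\Gamma)<\infty$, and since $X_n$ is $2^{-n}$-dense in $E$ and, up to a controlled error, lies on every $\Gamma_m$ with $m\ge n$, one concludes $E\subset\Gamma$.

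\emph{Main obstacle.} In both directions the genuinely non-Euclidean difficulty is the same: quantifying the drift of almost-parallel horizontal segments precisely --- not merely up to the comparability of metrics, but with the correct powers --- via a second-order (Taylor-type) expansion of the Hebisch--Sikora norm, and then tracking how the resulting $\sqrt{\ \cdot\ }$-type loss propagates through a multiscale sum. This loss is what replaces the Euclidean exponent $2$ by $4$ in the necessary direction and what, in the sufficient direction, prevents reaching the endpoint $p=4$; on the summation side, the corona decomposition (necessary direction) and the weighted H\"older estimate (sufficient direction) are the remaining technical points.
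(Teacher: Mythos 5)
A preliminary remark: the statement you were asked about is Theorem \ref{th:li-schul}, which this paper does not prove — it is quoted from \cite{li-schul-1,li-schul-2} (with the $p=2$ case of sufficiency going back to \cite{FFP}). So your proposal can only be compared with those proofs and with the machinery the present paper builds for its generalization (the curvature inequality \eqref{e:alex}, the filtration and flat/non-flat decomposition of Section 3, the farthest-insertion scheme of Section 4). At that level your outline points in the right direction, but the necessary direction has a genuine gap. Your local estimate is justified by projecting to the horizontal plane, using the area swept by $\pi_1(\Gamma)$ to control vertical drift, and importing the Euclidean length-deficit bound for the projection. That argument only sees a single connected arc of $\Gamma\cap B$ whose endpoints you control. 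The hard case — the one that forces most of the work in \cite{li-schul-1} and in Section \ref{s:flat} of this paper — is when the arc of $\Gamma$ through the center of $B$ is extremely flat (so the projection argument gives nothing), yet $\beta_\Gamma(B)$ is large because a \emph{different strand} of $\Gamma$ passes through $B$ at a vertical offset of order $r^2$. Such a configuration has projected flatness essentially zero, so neither $\beta_\Gamma(B)$ nor the Heisenberg triangle excess is bounded below by any power of the Euclidean $\beta$ of $\pi_1(\Gamma)$; the drift between strands is invisible to the projection. The known proofs handle this with a dichotomy at the level of arcs, not balls: for non-flat balls the curvature inequality is proved directly in the Heisenberg metric (delicate norm analysis, not a projection), and for flat balls one shows that the presence of two separated strands forces a definite surplus in any covering of $\Gamma\cap 2B$ at that scale, then runs a corona-type iteration over dyadic cubes. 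Your ``flat trees / packing'' sentence describes the Euclidean--Hilbert corona and does not contain this strand-separation argument.

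In the sufficient direction your skeleton (Jones/farthest-insertion construction, drift accounting via a second-order expansion of the Hebisch--Sikora norm) matches the actual route, but the only genuinely quantitative point — why the drift cost can be summed for every $p<4$ yet not, by this method, at the endpoint $p=4$ — is asserted rather than derived; ``H\"older against the scale weights'' does not by itself produce the needed geometric gain across scales, and in \cite{li-schul-2} this balancing of the horizontal cost against the vertical drift (which, in terms of the ordinary $\beta$, only gives an excess bound of order $\beta^2\ell$) is exactly where the work lies. Compare how the present paper sidesteps the issue by passing to the stratified $\hat{\beta}$, for which the excess obeys the clean bound of Corollary \ref{c:our-cor}. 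So as written the proposal is an accurate map of the landscape, but the two places where the real content of Theorem \ref{th:li-schul} lives — the flat-ball/multi-strand analysis in the necessity half and the sub-endpoint summation in the sufficiency half — are missing.
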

As $\beta \in [0,1]$, having integrability of a lower power of $\beta$ is a stronger condition.  It is unknown whether finiteness of $\diam(E) + \beta_4(E)$ is sufficient for $E$ to lie on a rectifiable curve.

For general Carnot groups, there is only a one-sided result.

\begin{theorem}[\cite{CLZ}]
  Let $E \subset G$ be a subset of a step $s$ Carnot group $G$ of Hausdorff dimension $Q$.  If $E$ lies on a finite length rectifiable curve, then
  \begin{align*}
    \diam(E) + \int_0^\infty \int_G \beta_E(x,r)^{2s^2} ~dx \frac{dr}{r^Q} < \infty.
  \end{align*}
  If $s = 2$, we can take the exponent of $\beta$ to be 4.
\end{theorem}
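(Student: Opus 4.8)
The plan is to establish the necessary direction by running the multiscale machinery of Jones and Okikiolu \cite{Jones-TSP,Ok-TSP} inside the Carnot group, using the Hebisch--Sikora homogeneous norm in place of the Euclidean norm, and replacing the Euclidean parallelogram gain by a quantitative convexity estimate for the Carnot--Carath\'eodory metric whose loss is controlled by the grading.

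First I would reduce to the case $E=\Gamma$ for a finite length curve $\Gamma$: since $E\cap B(x,r)\subseteq\Gamma\cap B(x,r)$ we have $\beta_E\le\beta_\Gamma$, and $\diam E\le\diam\Gamma$. A finite length curve in a Carnot group is horizontal, so after applying a dilation (under which every term of the asserted inequality scales by the same power of the dilation factor, using that $G$ is Ahlfors $Q$-regular) we may assume $\diam\Gamma\asymp\ell(\Gamma)\asymp 1$ and parametrize $\Gamma$ by Carnot--Carath\'eodory arclength, $\gamma:[0,L]\to G$. By the usual reduction (Fubini, Ahlfors regularity, and quasi-monotonicity of $\beta$ under enlargement of the ball) the integral term is comparable to $\sum_{k\ge 0}\sum_{x\in X_k}\beta_\Gamma(x,C_0 2^{-k})^{2s^2}\,2^{-k}$, where the $X_k\subseteq\Gamma$ are nested maximal $2^{-k}$-separated nets and $C_0>1$ is a suitable absolute constant; since $X_k$ is separated, the balls $B(x,C_0 2^{-k})$, $x\in X_k$, have bounded overlap in $G$, so this reduction is legitimate. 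It thus suffices to bound this sum by $\lesssim L$.

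Next comes the bookkeeping step. Let $\Gamma_k$ be the inscribed polygon joining consecutive points of $X_k$ (ordered along $\Gamma$) by Carnot--Carath\'eodory geodesics. Refining a net only inserts vertices, so $\ell(\Gamma_k)\le\ell(\Gamma_{k+1})$ by the triangle inequality, and every inscribed polygon has $\ell(\Gamma_k)\le\ell(\Gamma)=L$; hence $\sum_k(\ell(\Gamma_{k+1})-\ell(\Gamma_k))\le L$. The goal is to show
\begin{align*}
  \sum_{x\in X_k}\beta_\Gamma(x,C_0 2^{-k})^{2s^2}\,2^{-k}\;\lesssim\;\big(\ell(\Gamma_{k+1})-\ell(\Gamma_k)\big)+(\text{a term summing over }k\text{ to }\lesssim L).
\end{align*}
As in the Euclidean argument this is local: if $\beta_\Gamma(x,C_0 2^{-k})$ is not already negligible, one extracts points $a,c,b\in X_{k+1}$ near $x$ with $c$ between $a$ and $b$ along $\Gamma$, with $d(a,b)\gtrsim 2^{-k}$, and with $c$ at distance $\gtrsim\beta_\Gamma(x,C_0 2^{-k})\,2^{-k}$ from the near-optimal horizontal line realizing the infimum in $\beta$; the detour $a\to c\to b$ then accounts for the corresponding increment of length. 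One controls separately the portion of $\Gamma$ invisible to the net, which is harmless since $\Gamma$ stays within $2^{-k}$ of the net between consecutive net points taken along $\Gamma$.

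The heart of the matter — and where I expect the main difficulty to lie — is the local quantitative convexity estimate feeding into the display above: for $a,c,b\in B(x,C_0 2^{-k})$ with $c$ between $a$ and $b$ and at relative distance $\eta$ from the pertinent horizontal line,
\begin{align*}
  d(a,c)+d(c,b)-d(a,b)\;\gtrsim\;\eta^{2s^2}\,d(a,b).
\end{align*}
In $\R^n$ this is the parallelogram law and yields exponent $2$; the obstruction here is that the Carnot--Carath\'eodory metric comes from a homogeneous, non-Euclidean norm on a nonabelian group, so a deviation that reads as distance $\eta r$ may be hiding in a layer-$j$ coordinate of size only $(\eta r)^j$, a regime in which the metric's convexity is badly degenerate. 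The approach is to pass to the Hebisch--Sikora norm $N$ — smooth off the identity and engineered to be as convex as possible — expressing the three-point defect through $N$ of a product of the form (a long horizontal factor)$\cdot$(a small factor), and then Taylor-expanding $N$ in the dilation-graded directions: the horizontal directions return the usual quadratic Euclidean gain, while each successive layer $j=2,\dots,s$, which scales like $r^j$, dilutes the recovered gain by a further factor tied to $j$, so that unwinding all $s$ layers leaves the exponent $2s^2$. When $s=2$ the single vertical layer interacts with the horizontal layer only through the area form, and a hands-on computation — in particular an exact accounting of the drift between nearly parallel horizontal segments — upgrades the exponent to $4$. Combining the two displays with $\sum_k(\ell(\Gamma_{k+1})-\ell(\Gamma_k))\le L$ and undoing the rescaling completes the proof.
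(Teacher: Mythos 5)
Your skeleton---nets, inscribed polygons, telescoping of length increments, and a three-point excess estimate proved via the Hebisch--Sikora norm---is indeed the skeleton of \cite{CLZ} (and of the necessity argument in this paper). The genuine gap is at the charging step: you assert that whenever $\beta_\Gamma(x,C_0 2^{-k})$ is not negligible you can extract $a,c,b\in X_{k+1}$ near $x$, ordered along $\Gamma$, pairwise $\gtrsim 2^{-k}$-separated, with $c$ at distance $\gtrsim \beta_\Gamma(x,C_0 2^{-k})\,2^{-k}$ from the relevant horizontal line, and then harvest triangle excess locally. This extraction fails for ``flat'' balls: $\beta_\Gamma(B)$ sees the whole set $\Gamma\cap B$, while the excess inequality only sees one triple ordered along the curve. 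If $\Gamma$ crosses $B$ in two nearly straight, nearly parallel strands at mutual distance comparable to $\beta_\Gamma(B)\,r$ which are joined only far outside $B$, then every triple that is ordered along $\Gamma$, pairwise $\gtrsim r$-separated and contained in $B$ lies on a single strand and has essentially zero excess (note also that your displayed inequality is false unless the line is tied to $a$ and $b$: take $a,c,b$ on a horizontal segment parallel to $L$ at distance $\eta r$). Nothing at that scale and location can then be charged to $\ell(\Gamma_{k+1})-\ell(\Gamma_k)$, and your auxiliary ``invisible to the net'' term does not account for this multiplicity phenomenon. This is exactly why \cite{Jones-TSP,Ok-TSP,Schul-TSP,li-schul-1,CLZ} and Section \ref{s:flat} of this paper split the balls into non-flat ones (some single arc through $B$ has arc-$\beta$ comparable to $\beta_\Gamma(B)$; these are handled by the curvature estimate after organizing arcs into filtrations) and flat ones (every arc is much flatter, so the curve must visit $B$ in at least two well-separated strands; then any fine covering of $\Gamma\cap 2B$ has total diameter at least $(2+c\,\beta_\Gamma(B)^{\kappa})\diam(B)$ for a suitable power $\kappa$, and a Carleson-type packing over dyadic cubes converts this multiplicity into length). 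Without the flat-ball argument the proof does not close.

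A secondary issue: the heart of the matter, the excess lower bound with exponent $2s^2$, is only described heuristically. Taylor expansion of the Hebisch--Sikora norm in the higher-layer variables naturally yields \emph{upper} bounds of the form $N(x',y)-N(x',0)\le C|y|^2$ (this is how the sufficiency direction goes, cf.\ Lemma \ref{l:HS-taylor}); the necessity direction needs \emph{lower} bounds on the excess in terms of both the horizontal deviation and the non-horizontality of the increments $a^{-1}c$ and $c^{-1}b$, which in \cite{CLZ} come from algebraic lower bounds on $NH$ of products via bracket expressions together with a case analysis (quoted here through Lemma \ref{l:curvature} and Lemmas 3.2 and 3.4 of \cite{CLZ}), not from a Taylor expansion alone. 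So the exponent $2s^2$ does not simply fall out of ``unwinding the layers''; those layerwise lower bounds are precisely what must be proved.
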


The challenge for the sufficient direction has been that the direction of movement for Carnot groups are far more heterogeneous than Euclidean spaces, which cannot be detected by the $\beta$'s.  In the next section, we will exhibit a Carnot group for which the necessary and sufficient directions of the traveling salesman theorem with the original $\beta$s must have a gap.

\begin{proposition} \label{p:gap}
  There exists a Carnot group $G$ with Hausdorff dimension 6 so that for every $p_1 < 4$ and $p_2 > 2$ there are compact curves $E_1, E_2 \subset G$ such that $E_1$ is rectifiable and $E_2$ is not, but
  \begin{align*}
    \int_G \int_0^\infty \beta_{E_1}(x,r)^{p_1} ~\frac{dr}{r^6} ~dx &= \infty, \\
    \int_G \int_0^\infty \beta_{E_2}(x,r)^{p_2} ~\frac{dr}{r^6} ~dx &< \infty.
  \end{align*}
\end{proposition}

Thus, we see that we need a new kind of $\beta$-number if we are to have any hope of a sharp traveling salesman theorem that works for every Carnot group.  Note that this proposition does not preclude the possibility that the usual $\beta$-based traveling salesman theorem might be tight for certain Carnot groups (like $\H$).

Given a step $s$ Carnot group, we can form a projective system by quotienting out the center.  We let $G_s = G$ and get projections $G_i \to G_{i-1}$ until $G_0 = \{0\}$.  Let $\pi_i : G \to G_i$ be the composition of the projections from $G_s$ to $G_i$.

We now define a new type of $\beta$-number that we call stratified $\beta$-numbers.  For a subset $E \subset G$ and a ball $B(x,r)$, define:
\begin{align}
  \hat{\beta}_E(x,r)^{2s} &:= \inf_L \sum_{i=1}^s \sup_{z \in E \cap B(x,r)} \left(\frac{d(\pi_i(z),\pi_i(L))}{r}\right)^{2i} \label{e:orig-def}
\end{align}
where the infimum is over horizontal lines.  We have chosen the exponent so that $\hat{\beta}_E \geq \beta_E$.  One can replace the $\ell_1$-sum by an $\ell_p$ combination of the summands to get an equivalent notion of $\hat{\beta}$.  Note that we have abused notation here as $d$ is originally a metric on $G$, not the projected $\pi_i(G)$.  For now the reader can interpret this as $d$ being different homogeneous norms for every $\pi_i(G)$ (which will still give a valid $\hat{\beta}$) but we will clarify what we mean in the following section.

The following is the main result of this paper.
\begin{theorem} \label{th:main}
  Let $G$ be a step $s$ Carnot group with Hausdorff dimension $Q$.  For a set $E \subset G$, we define the quantity
  \begin{align*}
    \hat{\beta}(E) := \int_0^\infty \int_G \hat{\beta}_E(x,r)^{2s} ~dx \frac{dr}{r^Q}.
  \end{align*}
  Then $E$ lies on a finite length rectifiable curve if and only if $\diam(E) + \hat{\beta}(E)$ is finite.
  More precisely, there exists a constant $C > 1$ depending only on $d$ and the Euclidean structure $|\cdot|$ on $G$ so that:
  \begin{enumerate}[label=(\arabic*)]
    \item If $\Gamma$ is any curve containing $E$ then $\hat{\beta}(E) + \diam(E) \leq C\Hd^1(\Gamma)$,
    \item If $\hat{\beta}(E) + \diam(E) < \infty$, then there exists a curve $\Gamma \supset E$ for which $\Hd^1(\Gamma) \leq C (\hat{\beta}(E) + \diam(E))$.
  \end{enumerate}
\end{theorem}

We conclude with some equivalent formulations of $\hat{\beta}$ that might be of use in the future.  The original $\beta$ was described as the rescaled radius of the thinnest tube containing $E \cap B(x,r)$.  Thus, it can be rewritten as
\begin{align*}
  \beta_E(x,r) = \inf_L \inf \{ \epsilon > 0 : E \cap B_G(x,r) \subset L \cdot \delta_r(B_G(\epsilon)) \}.
\end{align*}
We have a similar description of $\hat{\beta}$, as well as a characterization using the Riemannian metric.

\begin{proposition} \label{p:euc-char}
  Let $E \subset G$ be a subset, $B_G(x,r)$ be a ball in $G$, and $\rho$ be a left-invariant Riemannian metric on $G$.  The following quantities are equivalent up to multiplicative constants depending only on $d$, $\rho$, and the Euclidean structure $|\cdot|$:
  \begin{enumerate}[label=(\arabic*)]
    \item $\hat{\beta}_E(x,r)$,
    \item $\inf_L \inf \{ \epsilon > 0 : E \cap B_G(x,r) \subset L \cdot \delta_r(B_{\R^n}(\epsilon^s)) \}$,
    \item $\inf_L \sup \{ \rho(z,L)^{1/s} : z \in \delta_{1/r}(x^{-1}E) \cap B_G(0,1) \}$.
  \end{enumerate}
  Here, the $L$ in the infimums of (2) and (3) vary over horizontal lines.
\end{proposition}
The normalization in (3) cannot be removed as $\rho$ does not behave well with regards to scaling.

One sees that $\hat{\beta}$ gives asymptotically tighter bounds than $\beta$ the lower the layer we are considering.  Proposition \ref{p:euc-char} will be proven in Section \ref{s:proof}.  The fact that the $B_{\R^n}(\epsilon^s)$ is a Euclidean ball is not terribly important.  The important thing is the way it scales with $\epsilon$.  It could be the ball of any norm on $\R^n$ of radius $\epsilon^s$.


Proofs of the traveling salesman theorem tend to follow a standard path.  The most important step is to establish the following Alexandrov-like curvature condition: if $p_1,p_2,p_3$ are in $B(x,r)$ and $d(p_i,p_j) \geq \alpha r$ for $i \neq j$, then there exist constants $c,C > 0$ depending on $\alpha$ so that
\begin{align}
  c \beta(x,r)^\gamma r \leq d(p_1,p_2) + d(p_2,p_3) - d(p_1,p_3) \leq C \beta(x,r)^\gamma r \label{e:alex}
\end{align}
where $\beta^\gamma$ is the appropriate $\beta$ for the situation.  Once the left hand inequality is established, one can apply a clever telescoping argument to derive the necessary direction (although a little more work is needed, see Section \ref{s:flat}).  If the right hand inequality is established, then one applies a farthest insertion algorithm (given in Jones's original paper) to derive the sufficient direction.  Thus, most of this paper will be devoted to proving results in the vein of \eqref{e:alex} for $\hat{\beta}_E^{2s}$ (see Proposition \ref{p:curvature} and Corollaries \ref{c:curvature}, \ref{c:our-cor} for the relevant results).  We will not go too in-depth for the subsequent steps, instead referring the reader to other papers where these steps are covered in detail.

\subsection{Acknowledgments}
The research presented here was supported by NSF grant DMS-1812879.  We would also like to thank the anonymous referees for their detailed review that caught many errors and helped improve the presentation of the paper, including substantial simplifications of many proofs.

\section{Preliminaries}

A Lie algebra $\g$ is stratified if it is nilpotent and can be decomposed into a direct sum of subspaces $\g = V_1 \oplus ... \oplus V_s$ for which $[V_1, V_j] = V_{j+1}$ for $j \geq 1$.  It is understood that $V_k = 0$ for $k > s$.  The layer $V_1$ is called the horizontal layer and the largest $s$ for which $V_s \neq 0$ is called the (nilpotency) step of $\g$.  We let $n_i = \dim(V_i)$ and $n = n_1 + ... + n_s$.  We can thus identify $\g$ with $\R^n$ where the direct sum $\g = \bigoplus V_i$ is orthogonal.  Note that our choice of basis for $\R^n \cong \R^{n_1 + ... + n_s}$ has been chosen to be consistent with the grading of $\g$.

A Carnot group $G$ is a simply connected Lie group whose Lie algebra is stratified.  The exponential map is a diffeomorphism between simply connected Lie groups and their Lie algebras, and so we can use $\exp$ to identify elements of $G$ using vectors of $\g \cong \R^n$.  This allows us to push the coordinates of $\R^n$ to $G$ (the so-called exponential coordinates).  Thus, we will use $(g_1,...,g_s) \in \R^n$ to write coordinates for elements $g \in G$ where $g_i$ are the components of $\exp^{-1}(g)$ in $V_i$.  The identity element of $G$ is still 0.

The subgroups $\exp(V_i \oplus V_{i+1} \oplus ... \oplus V_s)$ are all normal.  For $i \in \{1,...,s\}$, we then define the projection maps $\pi_i : G \to G/\exp(V_{i+1} \oplus ... \oplus V_s)$.  In exponential coordinates, this amounts to the usual projection of $\R^n$ onto $\R^{n_1 + ... + n_i}$.  The image of projection $\pi_i$ is also a Carnot group and its Lie algebra is simply $\g / \bigoplus_{j=i+1}^s V_j$.

We use $\|\cdot\|$ to denote the standard Euclidean norm on $G$ when viewed as $\R^n$ through exponential coordinates.  This will depend upon an arbitrary choice of basis for the $V_i$s.  We can then also define centered Euclidean balls in $G$ as such:
\begin{align*}
  B_{\R^n}(\epsilon) = \{g \in G : \|g\| < \epsilon\}.
\end{align*}

The group multiplication in $G$ can be expressed at the level of the Lie algebra via the Baker-Campbell-Hausdorff (BCH) formula:
\begin{align}
  \log(\exp(U)\exp(V)) = \sum_{k > 0} \frac{(-1)^{k-1}}{k} \underset{\substack{r_i+s_i > 0,\\r_i,s_i \geq 0,\\1 \leq i \leq k}}{\sum} a(r_1,s_1,...,r_k,s_k) [U^{r_1} V^{s_1} \cdots U^{r_k} V^{s_k}] \label{e:BCH-form}
\end{align}
where the bracket term denotes iterated Lie brackets
\begin{align*}
  [ X^{r_1} Y^{s_1} \dotsm X^{r_n} Y^{s_n} ] = [\underbrace{X,[X,\dotsm[X}_{r_1} ,[ \underbrace{Y,[Y,\dotsm[Y}_{s_1} ,\,\dotsm\, [ \underbrace{X,[X,\dotsm[X}_{r_n} ,[ \underbrace{Y,[Y,\dotsm Y}_{s_n}].
\end{align*}
One can also translate the BCH formula into the exponential coordinates as
\begin{align*}
  (u_1,...,u_s) \cdot (v_1,...,v_s) = (u_1 + v_1, u_2 + v_2 + P_2,..., u_s + v_s + P_s)
\end{align*}
where each $P_i$ are polynomials of $u_1,...,u_{i-1},v_1,...,v_{i-1}$.  Recall that each $u_i,v_i$ are elements of $\R^{n_i}$.  We call these $P_i$s the BCH polynomials.

For example, the Heisenberg group $\H$ has a Lie algebra that is generated by $X,Y,Z$ with the bracket relations $[X,Y] = Z$, $[X,Z] = [Y,Z] = 0$.  We identify each element of $\H$ by $(a,b,c) = \exp(aX + bY + cZ)$.  Thus, the BCH formula says that
\begin{align*}
  (a,b,c) \cdot (a',b',c') &= \exp\left(aX + bY + cZ + a'X + b'Y + c'Z + \frac{1}{2} [aX + bY + cZ, a'X + b'Y + c'Z] \right) \\
  &= \exp((a + a')X + (b + b')Y + (c + c' + (ab' - a'b)/2)Z) \\
  &= (a+a',b+b',c+c' + (ab' - a'b)/2).
\end{align*}
In the first line, we used the fact that all the $k > 2$ terms of \eqref{e:BCH-form} are zero due to the nilpotency of the Lie algebra.  Thus, $P_2((a,b),(a',b')) = \frac{1}{2}(ab' - a'b)$ in this case.

An important fact about BCH polynomials is that $P_k(u,v)$ are composed of monomials of iterated Lie brackets of terms of $u = (u_1,...,u_s)$ and $v = (v_1,...,v_s)$ whose layers add up to $k$.  For example, $P_3$ is a linear combination of the monomials
$$[u_1,[u_1,v_1]], [v_1,[u_1,v_1]], [u_1,v_2], [v_1,u_2].$$

Importantly, for every $\lambda > 0$, there exists an automorphism $\delta_\lambda : G \to G$, which in exponential coordinates is given by
\begin{align*}
  \delta_\lambda(g_1,...,g_s) = (\lambda^1 g_1, \lambda^2 g_2,...,\lambda^s g_s).
\end{align*}

A homogeneous norm is a function $N : G \to [0,\infty)$ for which
\begin{enumerate}[label=(\arabic*)]
  \item $N(g^{-1}) = N(g)$,
  \item $N(\delta_\lambda(g)) = \lambda N(g)$, for all $g \in G$, $\lambda > 0$,
  \item $N(g) = 0 \Leftrightarrow g = 0$,
  \item $N(gh) \leq N(g) + N(h)$.
\end{enumerate}
Property (4) says that the norm is subadditive and is not always standard in the definition of homogeneous norm.  We include it because then one can define, for any homogeneous norm, a corresponding homogeneous metric $d(g,h) = N(g^{-1}h)$, which will be a left-invariant metric.  If subadditivity were not present, then one cannot guarantee the triangle inequality.  When we want to stress the subadditivity property of the homogeneous norm, we will use the term {\it subadditive homogeneous norm}.  Property (2) of homogeneous norms tells us that the $\delta_\lambda$ automorphisms scale the metric:
$$d(\delta_\lambda(g), \delta_\lambda(h)) = \lambda d(g,h).$$
We remark that it is well known that any two homogeneous norms (subadditive or not) on a Carnot group are biLipschitz equivalent.

In this paper, we will work primarily with homogeneous metrics, and specifically, with the metrics corresponding to two (classes of) subadditive homogeneous norms that exist for every Carnot group.  The first norm was introduced by Hebisch and Sikora in \cite{hebisch-sikora}.  For a fixed Euclidean ball $B_{\R^n}(\eta) \subset \R^n \cong G$ centered at the origin, one can define the Minkowski gauge
\begin{align*}
  N_{HS}(g) = \inf \{ \lambda > 0 : \delta_{1/\lambda}(g) \in B_{\R^n}(\eta) \}.
\end{align*}
As the balls $B_{\R^n}(\eta)$ are not homothetic under the dilation $\delta_\lambda$, we get truly different norms for different $\eta$.  The main result in \cite{hebisch-sikora} is that for sufficiently small $\eta$, $N_{HS}$ is a subadditive homogeneous norm that is smooth on $G \setminus \{0\}$.  We use $d_{HS}$ for the corresponding homogeneous metric.  The balls of $N_{HS}$ centered at the origin are then dilations of $B_{\R^n}(\eta)$ and so are axis-aligned ellipsoids.

Given a Carnot group $G$ and one of its projections $\pi_i(G)$, if we take $\eta > 0$ small enough, then $B_{\R^n}(\eta)$ and $B_{\R^{n_1 + ... + n_i}}(\eta)$ both can form Hebisch-Sikora norms.  In this way, we can assume that a $N_{HS}$, defined for $G$, is also a homogeneous norm for all of its projections $\pi_i(G)$.  This is how we make sense of the $d$ in $\hat{\beta}$.

Having fixed $\eta$, we also take $B_{\R^{n_1+...+n_i}}(\eta)$ as the unit ball of a new Euclidean norm $|\cdot|$ on each of the quotients $\pi_i(G) \cong \R^{n_1 + ... + n_i}$.  This is equivalent to defining $|\cdot| = \frac{1}{\eta}\|\cdot\|$ where $\|\cdot\|$ is the standard Euclidean metric.  Now $|g \pm h|$ makes sense for $g,h \in G$.  As the unit ball of $N_{HS}$ is now also the unit ball of $|\cdot|$, we then have that
\begin{align*}
  d_{HS}(\pi_1(g), \pi_1(h)) = |\pi_1(g) - \pi_1(h)|, \qquad \forall g,h \in G.
\end{align*}
From now on, all balls of the form $B_{\R^n}$ will be taken with respect to $|\cdot|$.

We remark that the maps $\pi_i$ are 1-Lipschitz as we have the relationship $N_{HS}(\pi_i(g)) \leq N_{HS}(g)$.  This gives us the following relation between $d_{HS}$ in $G$ and in its projections under $\pi_i$:
\begin{lemma}
  $$d_{HS}(\pi_i(g), \pi_i(h)) = \min_{g' \in \pi_i^{-1}(\pi_i(g)), h' \in \pi_i^{-1}(\pi_i(h))} d_{HS}(g',h'), \qquad \forall g,h \in G$$
\end{lemma}

\begin{proof}
  In one direction, we have $d_{HS}(g', h') \geq d_{HS}(\pi_i(g'),\pi_i(h')) = d_{HS}(\pi_i(g), \pi_i(h))$.

  In the other direction, let $u = \pi_i(g)^{-1}\pi_i(h) \in G_i \cong \R^{n_1 + ... + n_i}$.  We define the element $(u,0) \in G$ where $0 \in \R^{n_{i+1} + ... + n_s}$.  Given any $g' \in \pi_i^{-1}(g)$, we let $h' = g' \cdot (u,0)$.  Then
  $$d_{HS}(g',h') = N_{HS}(u,0) = N_{HS}(u) = d_{HS}(\pi_i(g), \pi_i(h)).$$
  Here, the first $N_{HS}$ is in $G$ and the second is in $G_i$.
\end{proof}

The second norm is analogous to the $\ell_\infty$ norm and is defined as
\begin{align}
  N_\infty(x_1,...,x_s) = \max_{1 \leq i \leq s} \lambda_i |x_i|^{1/i}, \label{e:infty-norm}
\end{align}
where $\lambda_i > 0$ are constants with $\lambda_1 = 1$.  Lemma 2.5 of \cite{breuillard} says that one can choose the $\lambda_i$s so that $N_\infty$ is a subadditive homogeneous norm.  The proof chooses $\lambda_i$ inductively based only on the BCH polynomial $P_i$ and $\{\lambda_j\}_{j=1}^{i-1}$.  Thus, we may choose $\{\lambda_j\}_{j=2}^s$ so that each set of coefficients $\{\lambda_j\}_{j=1}^i$ defines a subaddivitve homogeneous norms on $\pi_i(G)$.  We use $d_\infty$ for the corresponding homogeneous metric.

Both norms are known to be {\it quasi-convex}, that is, there is a constant $C_d > 0$ depending only on the metric so that for any two points $x,y \in G$, there is a rectifable curve $\gamma$ going from $x$ to $y$ with
\begin{align}
  \text{Length}(\gamma) \leq C_d d(x,y). \label{eq:qc}
\end{align}

Also, we can use left-translation to push-forward a given inner product on $\g \cong T_0G$ to get a left-invariant inner product on $TG$.  This allows us to define a Riemannian metric $\rho$.  It is known that given a $M > 0$, there exists a constant $C_0 > 0$ depending on $M$, $|\cdot|$, and $\rho$ so that origin centered Riemannian balls of radius at most $M$ are equivalent to origin centered Euclidean balls:
\begin{align}
  B_{\R^n}(\epsilon/C_0) \subseteq  B_\rho(\epsilon) \subseteq B_{\R^n}(C_0\epsilon), \qquad \forall \epsilon \in (0,M]. \label{e:riem-compare}
\end{align}

Elements of the exponential image of the horizontal layer, $\exp(V_1)$, are called horizontal elements of $G$.  In exponential coordinates, horizontal elements look like $(g_1,0,...,0)$.  Let $S^{n_1-1}$ be the unit sphere of $V_1 \subset \g$ with respect to the Euclidean metric $|\cdot|$.  Given any $v \in S^{n_1-1}$ and $g \in G$, we get an isometric embedding of $\R$ into $G$ by $t \mapsto g\exp(tv)$.  We call the image of such mappings {\it horizontal lines}.

We also define the map
\begin{align*}
  \tilde{\pi} : G &\to G \\
  (g_1,...,g_s) &\mapsto (g_1,0,...,0).
\end{align*}
Thus, $\tilde{\pi}$ maps $g$ to the horizontal element of $G$ that lies ``under'' $g$.  Note that this is not a homomorphism, nor it is Lipschitz.  However, we do have
\begin{align}
  N(\tilde{\pi}(g)) \leq N(g), \qquad \forall g \in G  \label{e:NH-shrink}
\end{align}
when $N \in \{N_{HS}, N_\infty\}$.  We can now define
\begin{align*}
  NH(g) = d(g, \tilde{\pi}(g)),
\end{align*}
which measures how non-horizontal an element of $g$ is.  Here, $d$ can be any homogeneous metric.

Finally, it is not hard to see from the BCH polynomials that the Jacobians of all left translations have determinant 1.  This tells us that the Lebesgue measure on the underlying manifold $\R^n$ of the Carnot group $G$ is also a Haar measure.

\section{The inadequacy of $\beta$}
In this section, we show that the non-stratified $\beta$s cannot provide sharp two-sided traveling salesman inequalities for certain Carnot groups.  This necessitates our use of a more complex $\beta$ in the sequel.

Consider the Carnot group $G = \R^2 \times \H$.  This can be viewed as the group $(\R^5,\cdot)$ where
\begin{align*}
  (x,y,u,v,w) \cdot (x',y',u',v',w') = \left(x+x', y+y', u+u', v+v', w+w' + \frac{1}{2} (uv' - u'v) \right).
\end{align*}
We can write elements of $G$ as $(z,g)$ where $z \in \R^2$ and $g \in \H_1$.  Then we endow $G$ with a product metric $d((z,g), (z',g')) = \max\{|z - z'|,d_\infty(g,g')\}$ where $d_\infty(g,g') = N_\infty(g^{-1}g')$ comes from the norm of \eqref{e:infty-norm}.  The Hausdorff dimension of $G$ is 6.

Let $G_1 = \H$ and $G_2 = \R^2$.  We can also view these as subgroups $H_1 = \{0\} \times \H$ and $H_2 = \R^2 \times \{0\}$ of $G$.  It is immediate from the construction of the metric on $G$ that $G_i$ is isometrically isomorphic to $H_i$.  Let $\iota_1 : \H \to H_1$ and $\iota: \R^2 \to H_2$ be the natural isomorphisms.  Since the $\beta$-based traveling salesman theorems for $\H$ and $\R^2$ have different exponents, we will use these two subgroups to exhibit a gap between the necessary and sufficient directions a $\beta$-based traveling salesman theorem for $G$.

Proposition \ref{p:gap} will follow from the following proposition.

\begin{proposition} \label{p:gap-1}
  For every $p_1 < 4$, there exists a finite length rectifiable curve $E_1 \subset H_1$ such that
  \begin{align*}
    \int_G \int_0^\infty \beta_{E_1}(x,r)^{p_1} ~\frac{dr}{r^6} ~dx = \infty.
  \end{align*}
  On the other hand, for every $p_2 > 2$, there exists a non-rectifiable curve $E_2 \subset H_2$ such that
  \begin{align*}
    \int_G \int_0^\infty \beta_{E_2}(x,r)^{p_2} ~\frac{dr}{r^6} ~dx < \infty.
  \end{align*}
\end{proposition}

Let $\tilde{\pi}_i : G \to H_i$ be the natural projection maps.  From the definition of the product metric $d$, we see that the $\tilde{\pi}_i$ are all 1-Lipschitz.  We first show that if a set $E$ lies in one of these subgroups, the best approximating horizontal line can also be taken to lie in the subgroup.

\begin{lemma} \label{l:equal-beta}
  Let $i \in \{1,2\}$.  If $E$ is a subset of $G_i$, then
  \begin{align*}
    \inf_{L \subset G} \inf_{z \in \iota_i(E)} d(z,L) = \inf_{L \subset G_i} \inf_{z \in E} d(z,L).
  \end{align*}
  Here, the $L$ in the infimums are horizontal lines in their respective groups.
\end{lemma}

\begin{proof}
  As $G_i$ and $H_i$ are isometrically isomorphic via $\iota_i$, it suffices to prove that if $E \subseteq H_i$, then
  \begin{align*}
    \inf_{L \subset G} \inf_{z \in E} d(z,L) = \inf_{L \subset H_i} \inf_{z \in E} d(z,L)
  \end{align*}
  where the infimum on the right hand side is over horizontal lines lying in $H_i$.  The $\leq$ inequality is clear.  Thus, we only need to prove the $\geq$ inequality.

  Let $L: \R \to G$ be the parameterization of a horizontal line in $G$ that we will also identify with its image.  Now define the new map $L' = \tilde{\pi}_i \circ L$.  This also parameterizes a horizontal line that is now a subset of $H_i$.  It suffices to show
  \begin{align*}
    d(z,L) \geq d(z,L'), \qquad \forall z \in E.
  \end{align*}
  Let $z \in E$ and let $t \in \R$.  We have by 1-Lipschitzness of $\tilde{\pi}_i$ that
  \begin{align*}
    d(z,L(t)) \geq d(\tilde{\pi}_i(z),\tilde{\pi}_i(L(t))) = d(z, L'(t)),
  \end{align*}
  where we used the construction of $L'$ and the fact that $z \in H_i$ in the last equality.  This proves that $d(z,L) \geq d(z,L')$, which proves the lemma.
\end{proof}

We now prove the following lemma, which says the Carleson integral of $\beta$s for sets lying in the subgroups $H_i$ behave as the Carleson integrals of $\beta$s for those sets in the model groups $G_i$.  This will allow us to prove Proposition \ref{p:gap} using previous known examples.

\begin{lemma} \label{p:subbeta}
  There are constants $c_1, c_2 > 0$ so that for any $p$,
  \begin{align*}
    \int_G \int_0^\infty \beta_{\iota_1(E)}(x,r)^p ~\frac{dr}{r^6} ~dx &= c_1 \int_{G_1} \int_0^\infty \beta_E(x,r)^p ~\frac{dr}{r^4} ~dx, \quad \forall E \subset G_1, \\
    \int_G \int_0^\infty \beta_{\iota_2(E)}(x,r)^p ~\frac{dr}{r^6} ~dx &= c_2 \int_{G_2} \int_0^\infty \beta_E(x,r)^p ~\frac{dr}{r^2} ~dx, \quad \forall E \subset G_2.
  \end{align*}
\end{lemma}

\begin{proof}
  We will prove the $G_1$ case of the lemma as the $i = 2$ case only requires trivial modifications of the forthcoming proof.

  Let $(z,g) \in G$ here $z \in \R^2$ and $g \in \H$.  We first claim that $B((z,g),r)$ intersects $H_1$ if and only if $|z| \leq r$ in which case
  \begin{align*}
    B((z,g),r) \cap H_1 = \iota_1(B(g,r)).
  \end{align*}
  Note that $H_1 = \{(0,h) : h \in \H\}$.  Thus, the first part of the claim follows as soon as we show that $d((z,g),H_1) = |z|$.  The $\leq$ direction follows as $(0,g) \in H_1$ and $d((z,g),(0,g)) = |z|$.  Now let $(0,h) \in H_1$.  Then
  \begin{align*}
    d((z,g),(0,h)) = \max\{|z|, d_\infty(g,h)\}  \geq |z|,
  \end{align*}
  which gives the $\geq$ direction as desired.

  As for the second part of the claim, we see that when $|z| \leq r$, then $(0,h) \in B((z,g),r) \cap H_1$ precisely when $d(g,h) \leq r$, that is $h \in B(g,r)$.  As $\{0\} \times B(g,r) = \iota_1(B(g,r))$, we get the second part of the claim.

  Now let $E \subset G_1$ and let $g \in \H$.  By the previous claim we have that $\beta_{\iota_1(E)}((z,g),r) = 0$ unless $|z| \leq r$ in which case
  \begin{align*}
    \beta_{\iota_1(E)}((z,g),r) &= \inf_{L \subset G} \inf_{y \in B((z,g),r) \cap \iota_1(E)} \frac{d(y,L)}{r} = \inf_{L \subset G} \inf_{y \in \iota_1(B(g,r) \cap E)} \frac{d(y,L)}{r} \\
    &= \inf_{L \subset G_1} \inf_{y \in B(g,r) \cap E} \frac{d(y,L)}{r} = \beta_E(g,r)
  \end{align*}
  where we used Lemma \ref{l:equal-beta} in the penultimate equality.

  We now integrate
  \begin{align*}
    \int_0^\infty \int_G \beta_{\iota_1(E)}(x,r) ~dx \frac{dr}{r^6} 
      &= \int_0^\infty \int_{\H} \int_{\R^2} \beta_{\iota_1(E)}((z,g),r) ~dz ~dg \frac{dr}{r^6} \\
      &= \int_0^\infty \int_{\H} \int_{\R^2} {\bf 1}_{\{|z| \leq r\}} \beta_E(g,r) ~dz ~dg \frac{dr}{r^6} \\
      &= c_1 \int_0^\infty \int_{\H} \beta_E(g,r) ~dg \frac{dr}{r^4}.
  \end{align*}
  This proves the lemma.
\end{proof}

We now prove Proposition \ref{p:gap-1}.

\begin{proof}[Proof of Proposition \ref{p:gap-1}]
  As mentioned at the beginning of this section, this will amount to recalling the corresponding $E_i \subset G_i$ so that
  \begin{align*}
    \int_{\H} \int_0^\infty \beta_{E_1}(x,r)^{p_1} ~\frac{dr}{r^4} ~dx &= \infty, \\
    \int_{\R^2} \int_0^\infty \beta_{E_2}(x,r)^{p_2} ~\frac{dr}{r^2} ~dx &< \infty
  \end{align*}
  for the chosen $p_1, p_2$.

  The $E_1$ will come from the counter example from \cite{juillet}, although three remarks need to be made.

  First, in \cite{juillet}, Juillet uses a special metric on $\H$, the Carnot-Carath\'eodory distance (that we will not introduce), whereas we are using the $N_\infty$ group norm.  However, as the Carnot-Carath\'eodory distance is induced by a homogeneous norm, the two metrics are biLipschitz equivalent.  As $\beta$-numbers change only by a nonzero multiplicative factor under biLipschitz change of metrics, the finiteness of the Carleson integrals are the same regardless of which metric we use.

  Second, Juillet sums $\beta$s over balls whose centers come from a system of nets in $\H$, but it is well known that summing $\beta$ over nets and Carleson integrating $\beta$ give quantities that differ only by a multiple depending on the choice of nets.  In particular, infiniteness of the quantity is preserved.

  Finally, the result of Juillet states our result only for when $p_1$ is chosen to be 2 (although the statement also holds for any $p_1 < 2$).  But a simple modification of the construction by choosing $\theta_n = \frac{C}{n^{2/p_1}}$ on p. 1046 of \cite{juillet} allows us to extend the example to any $p_1 < 4$.  The rest of the proof will remain unchanged except at the following spots.  For Lemma 3.2 on p. 1047, $L$, the length of the curve, will now be bounded by a constant that depends on $p_1$.  In the proof we have the modified bound:
  \begin{align*}
    \sum_{n=1}^N \theta_n^2 \leq \sum_{n=1}^N \frac{C^2}{n^{4/p_1}} < \infty
  \end{align*}
  as $p_1 > 4$.

  At the summation on p. 1052, we change $\beta^2$ to $\beta^{p_1}$ and get the lower bound for the sum of $\beta$s:
  \begin{align*}
    \sum_{k \in \N} 2^{-k} \sum_{x \in \Delta_k} \beta_\H^{p_1}(x, A \cdot 2^{-k}) (\omega([0,1])) &\geq \sum_{k \in \N} 2^{-k} 2^k \left( \frac{D_{\theta_{\lceil k/2 \rceil + 1}}}{A} \right)^{p_1} \\
                                                                                                   &\geq C^{p_1/2} \sum_{k \in \N} \frac{1}{\lfloor k/2 \rfloor + 1} = \infty.
  \end{align*}

  For $E_2$, one can use the example of Proposition 4.1 of \cite{BM}.  Again, the quantity calculated there was a sum of $\beta$ numbers over balls centered at a system of nets, but this is equivalent to our Carleson integral.
\end{proof}

\section{Lemmas}

In this section, we prove a few results that will be useful in the following sections.




The following lemma will allow us to bound BCH polynomials.

\begin{lemma} \label{l:BCH-bound}
  There exists some constant $C > 0$ depending only on $G$ and the Euclidean structure $|\cdot|$ so that if $|y_i| \leq \eta$ and $|x_i| \leq 1$ for all $i \in \{1,...,k-1\}$ and any $\eta \in (0,1)$, then
  \begin{align*}
    |P_k(x_1,...,x_{k-1},y_1,...,y_{k-1})| \leq C\eta.
  \end{align*}
\end{lemma}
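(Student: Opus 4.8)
The plan is to prove the bound by induction on $k$. For $k=1$ there is nothing to prove since $P_1 \equiv 0$, and for $k=2$ the polynomial $P_2$ is bilinear in $x_1$ and $y_1$ (it comes from the single bracket term $\tfrac{1}{2}[x_1,y_1]$ in BCH), so $|P_2| \leq C|x_1||y_1| \leq C\eta$ immediately. The key structural fact I would exploit is that each BCH polynomial $P_k$ is a sum of iterated-bracket terms, and in exponential coordinates every such term that contributes to the $V_k$-component is a polynomial in the coordinates $x_1,\dots,x_{k-1},y_1,\dots,y_{k-1}$ that is \emph{homogeneous of weighted degree $k$}, where we assign weight $i$ to each coordinate in $V_i$ (this is exactly the statement that $\delta_\lambda$ is an automorphism: applying $\delta_\lambda$ to both factors scales the $V_k$-component by $\lambda^k$). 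Crucially, because the bracket always involves at least one $U$-slot and at least one $V$-slot in each BCH summand (the formula has no pure-$U$ or pure-$V$ terms beyond the linear ones, which are absorbed into $u_k+v_k$), every monomial appearing in $P_k$ contains at least one factor of some $y_j$.

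With that structure in hand, the argument is a bookkeeping estimate. Fix a monomial in $P_k$; by weighted homogeneity it has the form $c \prod_a x_{i_a} \prod_b y_{j_b}$ with $\sum_a i_a + \sum_b j_b = k$ and at least one $y$-factor present, say $y_{j_1}$ with $j_1 \leq k-1$. Using $|x_i| \leq 1$ we discard all the $x$-factors at cost $1$; using $|y_j| \leq \eta < 1$ we have $\prod_b |y_{j_b}| \leq |y_{j_1}| \leq \eta$. Hence each monomial is bounded by $|c|\,\eta$, and summing over the finitely many monomials of $P_k$ (a number depending only on $\g$) gives $|P_k| \leq C\eta$ with $C$ depending only on $G$. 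Strictly speaking one does not even need the induction once the homogeneity-plus-``at least one $y$'' structure is isolated; but phrasing it inductively makes the extraction of the $V_k$-component from the BCH formula cleanest, since $P_k$ depends only on the lower layers and one can peel off the top layer at each step.

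The main obstacle is not analytic but combinatorial-algebraic: one must justify carefully that in exponential coordinates the $V_k$-component of $\log(\exp U \exp V)$ is, after the linear part $u_k + v_k$ is removed, a polynomial in $u_1,\dots,u_{k-1},v_1,\dots,v_{k-1}$ only, that it is weighted-homogeneous of degree $k$, and that every monomial contains at least one $v$-variable. The first two points are standard consequences of nilpotency and of $\delta_\lambda$ being a Lie algebra automorphism (so that $\delta_\lambda U, \delta_\lambda V \mapsto \delta_\lambda(\text{product})$); the ``at least one $v$'' point follows because if $V=0$ then $\exp U \exp V = \exp U$, forcing $P_k$ to vanish when all $y$-variables are set to $0$, which for a polynomial means every monomial has a positive $y$-degree. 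Once these three facts are granted, the estimate above is routine.
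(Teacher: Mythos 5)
Your proof is correct and follows essentially the same route as the paper: bound each of the finitely many monomials of $P_k$ by noting it contains at least one $y$-factor of size at most $\eta$ while all remaining factors are bounded by $1$. The only (harmless) differences are that you justify the presence of a $y$-factor via $P_k(x,0)\equiv 0$ rather than the paper's observation about the innermost bracket entry coming from the BCH series, and the weighted-homogeneity remark and the inductive framing are not actually needed for the estimate.
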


\begin{proof}
  We first define the quantity $c = max\{1,\sup_{x,y \in B_{\R^n}(1)} |[x,y]|\}$, which is finite by boundedness of $B_{\R^n}(1)$.  It depends only on $G$ and the Euclidean metric.  We can conclude then that
  \begin{align}
    |[u,v]| \leq c|u||v|, \qquad \forall u,v \in G. \label{e:cross-bound}
  \end{align}

  $P_k$ is a polynomial of nested Lie bracket monomials.  As the number of summands in the polynomial is bounded by a constant dependent only on the algebraic structure of $G$, it suffices to prove each monomial is bounded by a constant of $\eta$.  Each Lie bracket monomial is of the form $[g_1,[g_2,...,[g_{j-1},g_j]...]]$ where $j \leq k$ and $g_i \in \{x_1,...,x_{k-1},y_1,...,y_{k-1}\}$.  It must be that one of $\{g_{j-1},g_j\}$ (say $g_j$) is in $\{y_1,...,y_{k-1}\}$.  Indeed, as the monomial is expanded from iterated Lie brackets of the vectors $x,y$, the inner bracket must be of the form $[x,y]$ as otherwise the iterated bracket would be 0 from antisymmetry of the bracket.

  We get
  $$|[g_1,[g_2,...,[g_{j-1},g_j]...]]| \overset{\eqref{e:cross-bound}}{\leq} c^{j-1} \prod_{i=1}^j |g_i| \leq c^{j-1} \eta \prod_{i=1}^{j-1} |g_i| \leq c^s \eta.$$
\end{proof}

The next few lemmas are the main results of this section.  They will allow us to conclude the following fact: if $g \in h \cdot B_{\R^n}(\epsilon)$ and $u,v \in S^{n_1-1}$ are so that $|u-v| < \epsilon$, then $g \exp(u) \in h\exp(v) \cdot B_{\R^n}(C\epsilon)$ for some constant $C$ depending only on the metric $d$ and the Euclidean metric on $G$.  Had we only had $d(g,h) \leq \epsilon$, then we could only guarantee $d(g\exp(u), h\exp(v)) \leq C\epsilon^{1/s}$ (see Lemmas 3.6, 3.7, and 3.8 of \cite{li-coarse}).

This essentially comes from the fact that we are group multiplying an element $g^{-1}h$ with entries of order $\epsilon$ by displacements of order 1.  One can see this by considering a simple case in the Heisenberg group.  Let $h = (0,0,0)$, $g = (0,\epsilon,0)$, and $u = v = (1,0,0)$.  Then
\begin{align*}
  (g\exp(u))^{-1}h\exp(v) = ((0,\epsilon,0)(1,0,0))^{-1}(1,0,0) = (0,0,\epsilon)
\end{align*}
Thus, while $|g^{-1}h|$ and $N(g^{-1}h)$ are both of order $\epsilon$, we have that $|(g\exp(u))^{-1}h\exp(v)|$ is of order $\epsilon$ while $N((g\exp(u))^{-1}h\exp(v))$ is of order $\epsilon^{1/2}$.

This behavior will be why stratified $\beta$-numbers are needed.

We first begin with a smoothness argument on products of elements.

\begin{lemma} \label{l:smoothness}
  There exists a $C > 0$ depending only on $G$ and the Euclidean structure $|\cdot|$ so that if $\eta \in (0,1)$, $p \in B_{\R^n}(\eta)$, and $u,v \in B_{\R^n}(1)$ are such that $|u-v| \leq \eta$, then
  \begin{align*}
    v^{-1}pu \in B_{\R^n}(C\eta).
  \end{align*}
\end{lemma}

\begin{proof}
  Define the function $\phi(v,p,u) = v^{-1}pu$.  This is a smooth function on $G \times G \times G$.  In particular, it is $L$-Lipschitz with respect to the Euclidean metric on $B_{\R^n}(2) \times B_{\R^n}(2) \times B_{\R^n}(2)$ for some $L > 0$ (depending on $|\cdot|$) and so
  \begin{align*}
    |\phi(v,p,u)| \leq |\phi(v,p,u) - \phi(u,p,u)| + |\phi(u,p,u) - \phi(u,0,u)| \leq L|v-u| + L|p| \leq 2L\eta.
  \end{align*}
\end{proof}

We can now prove the main result of this section.
\begin{lemma} \label{l:close-euc}
  There exists a $C > 0$ depending only on $G$ and the Euclidean structure $|\cdot|$ so that if $g,h \in G$ and $u,v \in B_{\R^{n_1}}(1)$ are such that $g \in h \cdot \delta_\ell(B_{\R^n}(\eta))$ and $|u-v| \leq \eta$ for some $\eta \in (0,1)$ and $\ell > 0$, then
  \begin{align*}
    g\delta_t(u) \in h\delta_t(v) \cdot \delta_\ell(B_{\R^n}(C\eta)), \qquad \forall t \in [0,\ell].
  \end{align*}
\end{lemma}

\begin{proof}
  We may dilate the setting to reduce to the case that $\ell = 1$.  Thus, let $t \in [0,1]$ and $u' = \delta_t(u)$ and $v' = \delta_t(v)$.  We need to prove that
  \begin{align*}
    (v')^{-1}h^{-1} gu' \in B_{\R^n}(C\eta)
  \end{align*}
  for some $C$.

  As $u,v$ lie in the first layer $\R^{n_1}$, we get that $u' = tu$ and $v' = tv$ and so we still have $|u' - v'| = t|u-v| \leq \eta$.  Note that $h^{-1}g \in B_{\R^n}(\eta)$ and $u',v' \in B_{\R^{n_1}}(1)$.  Thus, the result follows from applying Lemma \ref{l:smoothness} with $v'$, $h^{-1}g$, and $u'$.
\end{proof}

We now relate the relation between two points of $G$ in terms of containment by Euclidean balls with a $\hat{\beta}$-style bound on their distance.  The following lemma applies to both $d_\infty$ and $d_{HS}$.

\begin{lemma} \label{l:beta-balls}
  Given $M > 0$, there exists $C > 0$ depending only on $M$, $d$, and the Euclidean structure $|\cdot|$ so that $p,q \in G$ are such that $p \in q \cdot \delta_\ell (B_{\R^n}(\eta))$ for some $\eta \in (0,M]$ and $\ell > 0$ if and only if
  $$\max_{1 \leq i \leq s} \left(\frac{d(\pi_i(p),\pi_i(q))}{\ell}\right)^i \leq C\eta.$$
\end{lemma}

\begin{proof}
  As the $\pi_i$ are homomorphisms, we may translate and dilate to reduce to the case $\ell = 1$ and $q = 0$.  We will prove the lemma for $d_\infty$ as that allows us to conclude the same statement for $d_{HS}$ by a biLipschitz change of metric (for a different $C$).

  We are now asked to show that $p \in B_{\R^n}(\eta)$ is equivalent to
  \begin{align*}
    \max_{1 \leq i \leq s} N_\infty(\pi_i(p))^i \leq C\eta.
  \end{align*}

  If $p \in B_{\R^n}(\eta)$, then $|p_i| \leq C_0\eta$ for all $i$ for some $C_0$ depending on the Euclidean metric.  As $\eta \leq M$, we have for any $i$ that
  \begin{align*}
    N_\infty(\pi_i(p))^i = N_\infty(p_1,...,p_i)^i = \max_{1 \leq j \leq i} \lambda_j^i |p_i|^{i/j} \leq \eta \max_{1 \leq j \leq i} \lambda_j^i C_0^{i/j} M^{i/j-1}.
  \end{align*}
  The result then holds for some $C$ depending on the $\lambda_i$, $M$, and $C_0$.

  Now assume $N_\infty(\pi_i(p))^i \leq \eta$ for all $i$.  Then for any $i$
  \begin{align*}
    \lambda_i^i |p_i| \leq N_\infty(\pi_i(p))^i \leq \eta
  \end{align*}
  and so $|p_i| \leq C_1 \eta$ for some $C_1 > 0$ depending on the $\lambda_i$.  The result then holds for some $C$ depending on the $\lambda_i$ and the Euclidean metric.
\end{proof}

The following corollary is an immediate consequence of the previous two lemmas.

\begin{corollary} \label{l:close-lines}
  There exists a constant $C > 0$ depending only on $d_{HS}$ and the Euclidean structure $|\cdot|$ so that if $g,h \in G$ and $u,v \in B_{\R^{n_1}}(1)$ are such that $g \in h \cdot \delta_\ell(B_{\R^n}(\eta))$ and $|u-v| \leq \eta$ for some $\eta \in (0,1)$ and $\ell > 0$, then
  \begin{align*}
    \sup_{t \in [0,\ell]} \left( \frac{d_{HS}(\pi_i(g \cdot (tu,0,...,0)),\pi_i(h \cdot (tv,0,...,0)))}{\ell} \right)^i \leq C \eta, \qquad \forall i \in \{1,...,s\}.
  \end{align*}
\end{corollary}

\section{Necessity}

We begin by proving the necessary direction of Theorem \ref{th:main}.  This will be achieved by proving the following theorem.

\begin{theorem} \label{th:necessity}
  Let $G$ be a step $s$ Carnot group with Hausdorff dimension $Q$ and $\Gamma \subset G$, a finite length rectifiable curve.  Then there is some constant depending only on $G$ and its Euclidean structure $|\cdot|$ so that
  \begin{align*}
    \diam(\Gamma) + \int_0^\infty \int_G \hat{\beta}_\Gamma(x,r)^{2s} ~dx \frac{dr}{r^Q} \leq C \ell(\Gamma).
  \end{align*}
\end{theorem}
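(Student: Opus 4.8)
The plan follows the classical scheme of Jones, with the group structure entering only through a one-sided Alexandrov (curvature) estimate for $\hat{\beta}^{2s}$. Since both $\ell(\Gamma)$ and $\int_0^\infty\int_G\hat{\beta}_\Gamma(x,r)^{2s}\,dx\,\frac{dr}{r^Q}$ change by at most a multiplicative constant under a biLipschitz change of homogeneous metric, I am free to fix $d=d_{HS}$ throughout: unlike $d_\infty$, the Hebisch--Sikora norm is smooth away from the origin (it is cut out of $G\times(0,\infty)$ by the smooth equation $\sum_j |g_j|^2\lambda^{-2j}=\eta^2$) and its unit ball is genuinely curved in every layer direction, which is what makes a lower curvature bound possible. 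The bound $\diam(\Gamma)\leq\ell(\Gamma)$ is immediate, so everything lies in the integral, which I would first discretize. For $r\asymp 2^{-n}$ and a maximal $2^{-n}$-separated net $X_n$ of the $2^{-n}$-neighbourhood of $\Gamma$, the facts that $|B(x,r)|\asymp r^Q$, that $\hat{\beta}_\Gamma(y,r)\lesssim\hat{\beta}_\Gamma(x,Ar)$ when $d(x,y)\leq r$ (an admissible line for the larger ball serves the smaller one, the radius ratio being absorbed into the exponents $2i\leq 2s$), that $\hat{\beta}_\Gamma(x,r)=0$ off the $r$-neighbourhood of $\Gamma$, and bounded overlap of the relevant balls together reduce the theorem to
\begin{align*}
  \sum_{n\in\Z} 2^{-n}\sum_{x\in X_n}\hat{\beta}_\Gamma(x,A2^{-n})^{2s}\lesssim\ell(\Gamma).
\end{align*}

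The heart of the matter is a curvature lower bound: there exist $\alpha_0,c>0$ so that whenever $B(x,r)$ meets $\Gamma$ in three points $p_1,p_2,p_3$ occurring in this order along $\Gamma$ with $d(p_i,p_j)\geq\alpha_0 r$ for $i\neq j$, one has
\begin{align*}
  c\,\hat{\beta}_\Gamma(x,r)^{2s}r\leq d(p_1,p_2)+d(p_2,p_3)-d(p_1,p_3).
\end{align*}
To prove it, fix a horizontal line $L$ realizing $\hat{\beta}_\Gamma(x,r)$ up to a constant, and let the layer $i$ and the point $z^*\in\Gamma\cap B(x,r)$ maximize $(d(\pi_i(z^*),\pi_i(L))/r)^{2i}$, so that this $i$-th summand is already $\asymp\hat{\beta}_\Gamma(x,r)^{2s}$, while every $p\in\Gamma\cap B(x,r)$ obeys $d(\pi_j(p),\pi_j(L))\lesssim\hat{\beta}_\Gamma(x,r)^{s/j}r$ for all $j$; by Lemma~\ref{l:beta-balls} this confines $p_1,p_3$ and $z^*$ to an anisotropic Euclidean box about $L$ whose $j$-th side has length $\asymp\hat{\beta}_\Gamma(x,r)^{s/j}r$. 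A stopping-time along $\Gamma$ replaces $p_1,p_3$ by flanking points $a,b\in\Gamma\cap CB(x,r)$ with $d(a,z^*),d(z^*,b)\asymp r$, and one bounds $d(a,z^*)+d(z^*,b)-d(a,b)$ from below by Taylor-expanding $N_{HS}$ around a point of $L$: the $\lambda^{-2j}$ terms of the defining equation produce, when the $i$-th coordinate of $z^*$ is displaced from $L$ by $\asymp\hat{\beta}_\Gamma(x,r)^{s/i}r$, a defect of order $\big(\hat{\beta}_\Gamma(x,r)^{s/i}r\big)^{2i}/r^{2i-1}=\hat{\beta}_\Gamma(x,r)^{2s}r$ — the exponent $2i$ in \eqref{e:orig-def} is chosen precisely so that each layer contributes a comparable $\hat{\beta}^{2s}r$ — while Lemma~\ref{l:close-lines} controls the cross terms and the drift of $\Gamma$ off $L$ between the scales involved; the superadditivity of excess in the next paragraph then passes back from $(a,b)$ to $(p_1,p_2,p_3)$. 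I expect this estimate to be the main obstacle: it is a \emph{lower} bound, so one genuinely needs the convexity of $d_{HS}$ rather than $d_\infty$, and the exponents across all $s$ layers must align simultaneously.

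For the telescoping, parametrize $\Gamma$ by arc length $\gamma\colon[0,\ell(\Gamma)]\to G$ and for $u\leq v\leq w$ put $\mathrm{Exc}(u,w):=(w-u)-d(\gamma(u),\gamma(w))\geq 0$. Then
\begin{align*}
  \mathrm{Exc}(u,w)=\mathrm{Exc}(u,v)+\mathrm{Exc}(v,w)+\big(d(\gamma(u),\gamma(v))+d(\gamma(v),\gamma(w))-d(\gamma(u),\gamma(w))\big),
\end{align*}
so $\mathrm{Exc}$ is superadditive, and summing the bracketed triangle defects over any family of triples whose parameter intervals form a tree telescopes to at most $\mathrm{Exc}(0,\ell(\Gamma))\leq\ell(\Gamma)$ (using that the sum of consecutive chord lengths over a finite partition tends to $\ell(\Gamma)$). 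It then remains to choose, for each $n$ and each $x\in X_n$, a tripod as in the curvature inequality whose associated parameter interval respects a fixed dyadic tree on $[0,\ell(\Gamma)]$ so that these intervals have bounded overlap across all scales at once; using arc length and the $2^{-n}$-separation of $X_n$ this is the standard Jones--Schul bookkeeping, the balls on which $\Gamma$ fails to be spread at scale $2^{-n}$ being organized — via a stopping time exploiting flatness, where Lemma~\ref{l:close-lines} glues the near-horizontal approximating lines across consecutive scales — into chains each contributing at most a constant times its own length; this is the content of Section~\ref{s:flat}. Combining the curvature inequality, the superadditivity telescoping, and this bookkeeping yields the displayed dyadic estimate, hence Theorem~\ref{th:necessity}.
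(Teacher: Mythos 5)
Your proposal founders on its central claim. The inequality you call the heart of the matter --- that for \emph{every} ordered, $\alpha_0 r$-separated triple $p_1,p_2,p_3\in\Gamma\cap B(x,r)$ one has $c\,\hat{\beta}_\Gamma(x,r)^{2s}r\leq d(p_1,p_2)+d(p_2,p_3)-d(p_1,p_3)$ --- is false, already in the Euclidean plane and a fortiori here. If $\Gamma$ passes through $B(x,r)$ along one nearly straight strand, leaves, and later returns through the ball along a second nearly straight strand parallel to the first at distance $\beta r$, then $\hat{\beta}_\Gamma(x,r)\asymp\beta$ (no single line serves both strands), yet every well-separated triple taken in order along $\Gamma$ that happens to lie on one strand has excess essentially zero. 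Your attempted proof inherits the same defect: the point $z^*$ maximizing the deviation from the optimal line $L$, together with flanking points $a,b$ at distance $\asymp r$ along $\Gamma$, may all lie on the deviating strand and be collinear, so $d(a,z^*)+d(z^*,b)-d(a,b)$ does not see $d(\pi_i(z^*),\pi_i(L))$ at all. What is actually provable (and what Lemma \ref{l:curvature}, Proposition \ref{p:curvature} and Corollary \ref{c:curvature} assert) is that the excess of a chosen triple/quadruple/quintuple controls the deviation of \emph{those points and their chord segments} from one another --- an ``arc'' quantity, not the $\beta$ of the full set $\Gamma\cap B(x,r)$.

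Bridging that gap between arc-$\beta$ and set-$\beta$ is precisely what the flat/non-flat dichotomy is for, and it cannot be absorbed into ``bookkeeping'' as your last paragraph suggests. In the paper, balls where some arc through the ball has $\hat{\beta}(\tau)\gtrsim\epsilon_0\hat{\beta}_\Gamma(B)$ are handled by the filtration and telescoping machinery (this part of your outline --- superadditivity of the excess, the multiscale summation --- is in the right spirit, though you still need the segment versions of the curvature bound and the $\ell_{2i}$-summation of the $d_{\tau_k}$, not a single-scale triple estimate). The flat balls, where every arc through the center is much flatter than the set, require a genuinely different argument with no curvature input: Lemmas \ref{l:long-line} and \ref{l:disjoint-line} produce two separated pieces of $\Gamma$ inside $2B$, the covering estimate \eqref{e:ball-cover} converts this into a length surplus of order $\hat{\beta}_\Gamma(B)^s\diam(B)$, and the stratification of $\hat{\beta}$ values into the families $\cB^M$ sums these surpluses to $\lesssim\Hd^1(\Gamma)$. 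Your sketch omits this mechanism (and mischaracterizes Section \ref{s:flat} as a chain decomposition), so as written the argument does not prove the theorem; note also that were your universal triple inequality true, no flat-ball argument would be needed at all, which is a sign it is too strong. A minor further point: the Taylor expansion of $N_{HS}$ gives an \emph{upper} bound on the excess and is used in the sufficiency direction; the lower curvature bounds in the necessity direction come instead from the convexity estimates for the Hebisch--Sikora ball imported from \cite{CLZ}.
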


Note that the bound on the $\diam(\Gamma)$ term is obvious.  In fact, we may assume without loss of generality $\diam(\Gamma) = 1$ as $\hat{\beta}_{\delta_\lambda(E)}(\delta_\lambda(x), \lambda r) = \hat{\beta}_E(x,r)$.  Indeed, we have that $z \in E \cap B(x,r)$ if and only if $\delta_\lambda(z) \in \delta_\lambda(E) \cap B(\delta_\lambda(x),\lambda r)$.  Now given any horizontal line $L$, we get
\begin{align*}
  \frac{d(\pi_i(\delta_\lambda(z)), \pi_i(\delta_\lambda(L)))}{\lambda r} = \frac{d(\delta_\lambda(\pi_i(z)), \delta_\lambda(\pi_i(L)))}{\lambda r} = \frac{d(\pi_i(z), \pi_i(L))}{r}.
\end{align*}

For $g,h \in G$ and $t \in [0,1]$, define
\begin{align*}
  L_{g,h}(t) := g \delta_t(\tilde{\pi}(g^{-1}h)).
\end{align*}
This is a horizontal line segment that starts from $g$ and goes in the horizontal direction towards $h$ (although it may not hit $h$).  We also write
$$\overline{g,h} = \{L_{g,h}(t)\}_{t \in [0,1]}.$$

We first prove the following simple bound.
\begin{lemma} \label{l:lines-bounds}
  Let $p_1,p_2,... \in G$ and $\ell > 0$ be so that $d_{HS}(p_i,p_j) \leq \ell$ for all $i, j$.  Then for any $i \in \{1,...,s\}$, we have that
  \begin{align*}
    \sup_{t \in [0,1]} d_{HS}(\pi_i(L_{p_j,p_k}(t)), \pi_i(\overline{p_m,p_n})) \leq 2\ell, \qquad \forall j,k,m,n.
  \end{align*}
\end{lemma}

\begin{proof}
  As $\pi_i$ is 1-Lipschitz, it suffices to prove the inequality without the $\pi_i$ present.  As $N_{HS}(\tilde{\pi}(g)) \leq N_{HS}(g)$, we get for any $i,j$ that
  \begin{align*}
    d_{HS}(p_j,L_{p_j,p_k}(t)) = d_{HS}(0, \delta_t(\tilde{\pi}(p_j^{-1}p_k))) = t N_{HS}(\tilde{\pi}(p_j^{-1}p_k)) \leq t\ell \leq \ell
  \end{align*}
  when $t \in [0,1]$.  As $p_m \in \overline{p_m,p_n}$, we get
  \begin{align*}
    d_{HS}(L_{p_j,p_k}(t), \overline{p_m,p_n}) \leq d_{HS}(p_j, L_{p_j,p_k}(t)) + d_{HS}(p_j,p_m) \leq 2\ell.
  \end{align*}
\end{proof}

\subsection{Stratified curvature}
As mentioned in the introduction, we will be interested in proving inequalities for the triangle inequality excess:
\begin{align*}
  \Delta_i(p_1,...,p_k) = \sum_{j=1}^{k-1} d_{HS}(\pi_i(p_j),\pi_i(p_{j+1})) - d_{HS}(\pi_i(p_1),\pi_i(p_k)).
\end{align*}
It is important to note that this quantity is not preserved in any good sense under a change of biLipschitz equivalent metrics.  Thus, most of the results in this subsection will only be valid for $d_{HS}$.  However, these results are used to prove the necessary direction of the main theorem which is biLipschitz invariant.

Given $\lambda \in (0,1)$, $\ell > 0$, and $i \in \{1,...,s\}$, we define
\begin{align*}
  A(\lambda,\ell,k) &= \{(g,h) \in G \times G : N_{HS}(\pi_k(g)), N_{HS}(\pi_k(h)), N_{HS}(\pi_k(gh)) \in (\lambda \ell, \ell)\}, \\
  \overline{A}(\lambda,\ell,k) &= \{(g, h) \in G \times G : N_{HS}(\pi_k(g)), N_{HS}(\pi_k(h)), N_{HS}(\pi_k(gh)) \in [\lambda \ell, \ell]\}.
\end{align*}
Note that $A(\lambda,\ell,k)$ and $\overline{A(\lambda,\ell,k)}$ are open and compact in $G \times G$ respectively.

We will first need the following lemma from \cite{CLZ} that bounds $\Delta_s(p_1,p_2,p_3)$ from below when the points $p_i$ are sufficiently well separated.

\begin{lemma} \label{l:CLZ}
  For every $\lambda \in (0,1)$, there exists a constant $C > 0$ depending only on $d_{HS}$ and $\lambda$ so that if $p_1,p_2,p_3 \in G$ are so that $\lambda \ell \leq d_{HS}(p_i,p_j) \leq \ell$ when $i \neq j$ for some $\ell > 0$, then
  \begin{align*}
    \frac{d_{HS}(\pi_1(p_2),\pi_1(\overline{p_1,p_3}))^2}{\ell} + \frac{NH(p_1^{-1}p_3)^{2s}}{\ell^{2s-1}} \leq C\Delta_s(p_1,p_2,p_3).
  \end{align*}
\end{lemma}

\begin{proof}
  This is essentially Lemma 3.4 of \cite{CLZ}, but we will explain the conversion, as \cite{CLZ} uses slightly different notation.  In \cite{CLZ}, $r$ is the step of $G$, the constant $m \in [0,1]$ can be chosen arbitrarily as stated right before Lemma 3.4, and $\alpha$ is a constant depending only on the group and metric structure of $G$ as defined in (3.1) of \cite{CLZ}.  The $\R^n$ is the first layer and so corresponds to our $\R^{n_1}$.  The $x_1,y_1$ corresponds to our projections $\pi_1(x)$ and $\pi_1(y)$ in $\R^{n_1}$ and the $\|\cdot\|$ denotes the Hebisch-Sikora norm in the Carnot group.  The $NH(g)$ in \cite{CLZ} is just the group term $\tilde{\pi}(g)^{-1}g$, and so $\|NH(g)\|$ is equivalent to our $NH(g)$.

  For $u \in \R^{n_1}$ the $\ell_u$ is the line in $\R^{n_1}$ from the origin to $u$.  Note that this is not the same as our $\pi_1(\overline{p,q})$ as this line segment starts from $\pi_1(p)$ and goes to $\pi_1(q)$.  However, we do have that
  $$\pi_1(\overline{p,q}) = \pi_1(p) + \ell_{\pi_1(p^{-1}q)}.$$
  One now simply sets $x = p_1^{-1}p_2$ and $y = p_2^{-1}p_3$ and uses the fact that $x_1 + y_1 = \pi_1(p_1^{-1}p_3)$ (the left hand side uses the projection notation of \cite{CLZ} but the right hand side does not) to get that
  \begin{align*}
    \ell_{x_1+y_1} - x_1 = (\pi_1(p_1) + \ell_{\pi_1(p_1^{-1}p_3)}) - (\pi_1(p_1) + \pi_1(p_1^{-1}p_2)) = \pi_1(\overline{p_1,p_3}) - \pi_1(p_2)
  \end{align*}
  where we are interpreting $\ell_{x_1+y_1}$ as a set.  Thus, we have that
  \begin{align*}
    d_{\R^n}(x_1, \ell_{x_1+y_1}) = d_{HS}(\pi_1(p_2), \pi_1(\overline{p_1,p_3}))
  \end{align*}
  where the left hand side is written in the notation of \cite{CLZ}.

  To get our lemma, we first bound
  \begin{align}
    NH(p_1^{-1}p_3) &\leq N_{HS}(p_1^{-1}p_3) + N_{HS}(\tilde{\pi}(p_1^{-1}p_3)) \overset{\eqref{e:NH-shrink}}{\leq} 2\ell, \label{e:A1-bound} \\
    d_{HS}(\pi_1(p_2), \pi_1(\overline{p_1,p_3})) &\leq d_{HS}(\pi_1(p_2), \pi_1(p_1)) \leq \ell. \label{e:A2-bound}
  \end{align}
  As $d_{HS}(p_i,p_j) \geq \lambda \ell$, we get that the $A$ term in \cite{CLZ} is bounded by
  \begin{align*}
    A \leq \frac{m^{2s}}{16} \max\{\alpha, 1\} (2\lambda^{1-2s} + (2\lambda)^{-1})\ell
  \end{align*}
  Thus, by choosing $m$ sufficiently small depending on $\alpha$ and $\lambda$, we get that $d_{HS}(p_i,p_j) \geq \lambda \ell \geq 4A$ for all $i \neq j$ for the hypothesis of the lemma.  The conclusion of Lemma 3.4 of \cite{CLZ} now gives our lemma (remembering again that $d(p_i,p_j) \geq \lambda \ell$).
\end{proof}

We will need to augment this bound as it is not sufficient for our goals.

First, we will prove a lemma that says if the projection under $\pi_k$ of well-separated elements of $G$ are no longer well separated, then the points must have a large $\Delta_s$.

\begin{lemma} \label{l:flat-points}
  For each $\lambda \in (0,1)$ and $k \in \{1,...,s-1\}$ there exists a constant $C > 0$ depending also on $d_{HS}$ so that if $(u,v) \in \overline{A}(\lambda,\ell,s) \backslash A(\lambda/2,\ell,k)$ then
  \begin{align*}
    N_{HS}(u) + N_{HS}(v) - N_{HS}(uv) \geq C\ell.
  \end{align*}
\end{lemma}

\begin{proof}
  Assume such a $C$ does not exist.  Then one can find a sequence $(u_n,v_n)$ for which
  $$N_{HS}(u_n) + N_{HS}(v_n) - N_{HS}(u_nv_n) \to 0.$$
  As $\overline{A}(\lambda,\ell,s) \backslash A(\lambda/2,\ell,k)$ is compact, we can pass to a limit pair $(u,v) \in \overline{A}(\lambda,\ell,s)$ for which
  \begin{align}
    N_{HS}(u) + N_{HS}(v) - N_{HS}(uv) = 0. \label{e:triangle-eq}
  \end{align}
  By Lemma \ref{l:CLZ}, we then have that $NH(uv) = 0$.  This means $uv$ is precisely the horizontal element $uv = (\pi_1(u) + \pi_1(v),0,...,0)$ and that $N_{HS}(uv) = |\pi_1(u) + \pi_1(v)|$.

  As the Hebisch-Sikora norm is 1-Lipschitz under $\pi_1$ and isometric only for horizontal elements, we have that $u$ and $v$ must be horizontal as otherwise \eqref{e:triangle-eq} will be positive.  However, this would mean that $N_{HS}(\pi_k(u)) = N_{HS}(u)$ and $N_{HS}(\pi_k(v)) = N_{HS}(v)$, which would mean $(u,v) \in A(\lambda/2,\ell,k)$, a contradiction.
\end{proof}

We now start controlling the triangle inequality excess from below.  In the following lemma, we seek to apply Lemma \ref{l:CLZ} to projections of points under $\pi_k$.  However, if the projected points are no longer well separated, one of the requirements of Lemma \ref{l:CLZ}, then we can apply Lemma \ref{l:flat-points} instead.

\begin{lemma} \label{l:curvature}
  For every $\lambda \in (0,1)$ and $k \in \{2,...,s\}$, there exists a constant $C > 0$ depending only on $d_{HS}$ and $\lambda$ so that if $p_1,p_2,p_3 \in G$ are so that $\lambda \ell \leq d_{HS}(p_i,p_j) \leq \ell$ when $i \neq j$ for some $\ell > 0$, then
  \begin{align*}
    \frac{d_{HS}(\pi_1(p_2),\pi_1(\overline{p_1,p_3}))^2}{\ell} &+ \frac{NH(\pi_k(p_1^{-1}p_3))^{2k}}{\ell^{2k-1}} \leq C(\Delta_k(p_1,p_2,p_3) + \Delta_s(p_1,p_2,p_3)).
  \end{align*}
\end{lemma}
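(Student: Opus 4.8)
The plan is to prove the two summands on the left-hand side separately, each bounded by a constant times the "excess" quantity $E := d_{HS}(p_1,p_2) + d_{HS}(p_2,p_3) - d_{HS}(p_1,p_3) \geq 0$. After rescaling we may assume $\ell = 1$, so all three pairwise distances lie in $[\lambda,1]$, and after a left translation we may assume $p_1 = 0$. It is convenient to also split into two regimes: the "large excess" regime $E \geq \epsilon_0$ for a small threshold $\epsilon_0 = \epsilon_0(G,\lambda)$, and the "small excess" regime $E < \epsilon_0$. In the large excess regime both summands on the left are bounded by an absolute constant (the first because $d_{HS}(\pi_1(p_2),\pi_1(\overline{p_1,p_3})) \leq d_{HS}(p_2, p_1) + \diam \leq 2$ roughly, after using $N_{HS}(\pi_1(\cdot)) \le N_{HS}(\cdot)$, and the second because $NH(\pi_i(p_2^{-1}p_3)) \lesssim 1$), so the inequality holds trivially with $C = 2/\epsilon_0$; hence it suffices to treat the small excess regime.

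For the small excess regime the key idea is a \emph{Taylor expansion of the Hebisch--Sikora norm}. Since $E$ is small, the points $p_2$ and the horizontal segment $\overline{p_1,p_3}$ are nearly collinear in the metric sense; more precisely, the triple $p_1,p_2,p_3$ is close to lying on a single horizontal line. One first establishes, via a compactness/continuity argument à la Lemma \ref{l:pi-NH-decomp}, that smallness of $E$ forces (i) $\pi_1(p_2)$ to be within $o(1)$ of the affine line through $\pi_1(p_1)$ and $\pi_1(p_3)$ in $\pi_1(G) \cong \R^{n_1}$, and (ii) $NH(\pi_i(p_1^{-1}p_3))$ to be small for each $i$. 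The quantitative version is where the real work lies: expanding $N_{HS}$ around a point of its (smooth, away from $0$) unit sphere, the first-order term vanishes when the configuration is exactly collinear, and the quadratic form appearing in the second-order term is positive definite transverse to the horizontal direction. Here the anisotropy of $\delta_\lambda$ is essential: a perturbation in the $i$-th layer enters the Minkowski-gauge expansion at order $2i$ in the "physical" size of that layer, which is exactly why one gets $NH(\pi_i(\cdot))^{2i}$ (not $NH(\pi_i(\cdot))^2$) controlled by $E$, and matches the $2s$ exponent in $\hat\beta$.

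I would carry this out in the following order: (1) reduce to $\ell=1$, $p_1 = 0$, and to the small-excess regime; (2) prove the qualitative statement that $E \to 0$ forces both the horizontal $\beta$-type quantity and each $NH(\pi_i(\cdot))$ to $\to 0$, using compactness of $\partial B_{HS}(1)$; (3) set up the second-order Taylor expansion of $t \mapsto N_{HS}(\cdot)$ along the near-collinear configuration, identifying the Hessian and checking positive-definiteness in the directions transverse to $V_1$ (this is where the Hebisch--Sikora construction's smoothness and the stratification interact); (4) read off the two lower bounds from the positive quadratic terms, tracking the correct anisotropic powers $2$ and $2i$; (5) for the $NH(\pi_1(p_1^{-1}p_3))$-to-$\beta$-term conversion, use Lemma \ref{l:beta-balls} together with the observation that $d_{HS}(\pi_1(p_2), \pi_1(\overline{p_1,p_3}))$ is comparable to the Euclidean distance from $\pi_1(p_2)$ to the affine line in the $n_1$-dimensional horizontal layer (where $d_{HS}$ restricts to a norm biLipschitz to Euclidean). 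The main obstacle I anticipate is step (3): extracting a clean, layer-graded second-order lower bound for the Hebisch--Sikora gauge — one has to be careful that the "cross terms" between different layers in the Taylor expansion do not destroy the positivity, which is presumably handled by Young's inequality absorbing the cross terms into the diagonal $NH(\pi_i)^{2i}$ terms at the cost of enlarging $C$, and this is exactly the Taylor expansion technique the abstract advertises.
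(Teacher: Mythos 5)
Your reduction to $\ell=1$, $p_1=0$, and to a small-excess regime matches the coarse structure of the argument (the paper handles the large-excess case by noting both left-hand terms are $\lesssim \ell$ and ruling out the bad branch of a dichotomy using the $\lambda\ell$-separation). But the core of your plan --- steps (3)--(4), a second-order Taylor expansion of $N_{HS}$ at a point of its unit sphere with a Hessian that is positive definite transverse to $V_1$ --- has a genuine gap. The quantity to be controlled is $NH(\pi_i(p_1^{-1}p_3))$, where $p_1^{-1}p_3 = ab$ with $a=p_1^{-1}p_2$, $b=p_2^{-1}p_3$. Its nonhorizontal part is produced not only by the nonhorizontal coordinates of $a$ and $b$ but also by the BCH bracket terms generated when the horizontal components $u,v$ of $a,b$ are misaligned. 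So what must be shown is that the excess $N(a)+N(b)-N(ab)$ controls, with the correct anisotropic powers, both the angle $|u||v|\bigl|\tfrac{u}{|u|}-\tfrac{v}{|v|}\bigr|$ and the nonhorizontal coordinates of $a$ and $b$, and then that these algebraically combine (via the BCH polynomials) into a bound on $NH(\pi_i(ab))^{2i}$. A pointwise expansion of the gauge, which only describes how the norm responds to perturbing a single point off a line, does not see the group product at all and therefore cannot by itself yield the $NH(\pi_i(p_1^{-1}p_3))^{2i}$ bound. This is exactly the content the paper does \emph{not} reprove: it quotes Lemma 3.2 of \cite{CLZ} applied in the quotient $\pi_i(G)$ (the algebraic bound \eqref{e:NHi} on $NH(\pi_i(ab))^{2i}$) and then adapts the convexity argument of Lemma 3.4 of \cite{CLZ} to obtain the dichotomy \eqref{e:first-option}/\eqref{e:triangle-HS}; the Taylor expansion of $N_{HS}$ (Lemma \ref{l:HS-taylor}) is used in this paper only for the \emph{opposite}, upper-bound direction in the sufficiency section.

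Two further soft spots: your qualitative step (2) asserts, via compactness, that vanishing excess forces horizontal collinearity; for the Hebisch--Sikora metric this is itself a nontrivial property of the specific gauge (it is essentially the rigidity inside the Hebisch--Sikora subadditivity proof) and would need an argument, and in any case compactness cannot produce the exponents $2$ and $2i$, so it does not shorten the quantitative work. And in step (5), bounding $d_{HS}(\pi_1(p_1^{-1}p_2),\pi_1(\overline{p_1,p_3}))^2$ by the excess is not a clean projection-to-$\R^{n_1}$ argument: projecting gives $d_{HS}(p_j,p_k)\geq c|\pi_1(p_j)-\pi_1(p_k)|$ with the inequality in the wrong direction for the $d_{HS}(p_1,p_3)$ term, so one needs the quantitative near-horizontality of $p_1^{-1}p_3$ first; the two left-hand terms are coupled and are handled simultaneously in the CLZ-style dichotomy rather than separately as your plan suggests.
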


\begin{proof}
  Let $u = p_1^{-1}p_2$ and $v = p_2^{-1}p_3$.  By assumption, $(u,v) \in \overline{A}(\lambda,\ell,s)$.  If $(u,v) \in A(\lambda/2,\ell,k)$, then we apply Lemma \ref{l:CLZ} to the points $\pi_k(p_j)$ in the Carnot group $\pi_k(G)$ to get an upper bound of $C\Delta_k(p_1,p_2,p_3)$ for the left hand side of the desired inequality.

  Thus, we may now assume that $(u,v) \in \overline{A}(\lambda,\ell,s) \backslash A(\lambda/2,\ell,k)$.  Then Lemma \ref{l:flat-points} tells us there is a constant $C_0 > 0$ so that
  \begin{align*}
    \Delta_s(p_1,p_2,p_3) \geq C_0\ell.
  \end{align*}
  If we show
  \begin{align}
    \frac{d_{HS}(\pi_1(p_2),\pi_1(\overline{p_1,p_3}))^2}{\ell} + \frac{NH(\pi_k(p_1^{-1}p_3))^{2k}}{\ell^{2k-1}} \leq 2^{2k+1}\ell \label{e:first-option}
  \end{align}
  then the lemma follows with $C = 2^{2s+1} C_0^{-1}$.  This follows from \eqref{e:A2-bound} and a bound similar to \eqref{e:A1-bound}, but using the fact that $\pi_k$ is 1-Lipschitz.
\end{proof}

We now combine the curvature bounds of Lemma \ref{l:curvature} across all layers to get the following new stratified curvature bound.  One can now see where the $\hat{\beta}$ starts appearing.

\begin{proposition} \label{p:curvature}
  For any $\lambda > 0$, there exists a constant $C > 0$ depending only on $d_{HS}$, $\lambda$, and Euclidean structure $|\cdot|$ so that the following holds.  Let $p_1,p_2,p_3,p_4 \in G$ be points so that $\lambda \ell \leq d_{HS}(p_i,p_j) \leq \ell$ if $i \neq j$ for some $\ell > 0$.  Then
  \begin{align*}
    \sum_{i=1}^s \sup_{t \in [0,1]} \frac{d_{HS}(\pi_i(L_{p_1,p_3}(t)),\pi_i(\overline{p_1,p_4}))^{2i}}{\ell^{2i-1}} +  \sup_{t \in [0,1]} \frac{d_{HS}(\pi_i(L_{p_3,p_4}(t))),\pi_i(\overline{p_1,p_4}))^{2i}}{\ell^{2i-1}} \leq C \Delta,
  \end{align*}
  where $\Delta = \sum_{i=2}^s \Delta_i(p_1,p_2,p_3,p_4)$.
\end{proposition}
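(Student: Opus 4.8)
The plan is to reduce the statement, via two applications of the triangle–excess bound of Lemma~\ref{l:curvature}, to two applications of the almost–parallel–lines estimate of Lemma~\ref{l:close-lines}; the real work is in arranging for the error term to come out with the sharp power of $\Delta$. First I would normalize: since every quantity here is invariant under the dilations $\delta_\rho$ and under left translation, I may assume $\ell = 1$ and $p_1 = 0$. Write
\begin{align*}
  \Delta_{123} = d(p_1,p_2)+d(p_2,p_3)-d(p_1,p_3), \qquad \Delta_{134} = d(p_1,p_3)+d(p_3,p_4)-d(p_1,p_4);
\end{align*}
both are $\geq 0$ by the triangle inequality and $\Delta = \Delta_{123}+\Delta_{134}$, so $\Delta_{123},\Delta_{134}\leq\Delta$. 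Fix a small $\mu = \mu(\lambda,G) > 0$ to be pinned down at the end. If $\Delta \geq \mu$, then since $L_{p_1,p_3}(t)$, $L_{p_3,p_4}(t)$ and $\overline{p_1,p_4}$ all lie in $B(p_1,2)$, every summand on the left is at most $4^i$, so the left side is $\leq C \leq (C/\mu)\Delta$ and we are done. Hence assume $\Delta < \mu$.

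Next I would extract geometry from Lemma~\ref{l:curvature}, applied to $(p_1,p_2,p_3)$ and to $(p_1,p_3,p_4)$. With $\ell = 1$ and $p_1 = 0$, the first gives $NH(\pi_i(p_3))^{2i} \leq C\Delta$ for all $i$, and the second gives $d(\pi_1(p_3),\pi_1(\overline{p_1,p_4}))^2 \leq C\Delta$ and $NH(\pi_i(p_4))^{2i} \leq C\Delta$ for all $i$. Set $w = \pi_1(p_3)$, $z = \pi_1(p_4) \in \R^{n_1}$. Since $d(\pi_1(\cdot),\pi_1(\cdot))$ dominates the Euclidean distance on $V_1$ up to a constant, the middle bound reads $\dist(w,\{sz : s\in[0,1]\}) \leq C\sqrt\Delta$. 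Because $NH(p_3),NH(p_4)$ are small, Lemma~\ref{l:pi-NH-decomp} forces $|w|$ and $|z|$ to be bounded below by a constant depending only on $\lambda$ and $G$; applying Lemma~\ref{l:proj} to $(p_1,p_3,p_4)$ and to $(p_4,p_3,p_1)$ then shows the point of $\{sz : s\in[0,1]\}$ nearest $w$ is $cz$ with $c \in [0,1]$, so $|w - cz| = \dist(w,\R z) \leq C\sqrt\Delta$. Finally a downward induction on $j$ gives $|p_{3,j}| \leq C\sqrt\Delta$ for all $j \geq 2$: the $j$-th layer of $\tilde\pi(\pi_j(p_3))^{-1}\pi_j(p_3)$ has Euclidean size at most $NH(\pi_j(p_3))^{j} \leq (C\Delta)^{1/2}$, and it equals $p_{3,j}$ plus Baker--Campbell--Hausdorff terms whose monomials either vanish (those built only from $\pm w$, whose innermost bracket is $[\pm w,\pm w]=0$) or carry a factor $p_{3,k}$, $2\leq k<j$, bounded by $C\sqrt\Delta$ by induction and the estimates of Lemma~\ref{l:BCH-bound}.

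For the first sum, $L_{p_1,p_3}(t) = (tw,0,\dots,0)$ and $L_{p_1,p_4}(ct) = (ctz,0,\dots,0) \in \overline{p_1,p_4}$ since $ct \in [0,1]$; Lemma~\ref{l:close-lines} with $g = h = p_1 = 0$, $\ell = 1$, $\eta = C\sqrt\Delta$ and direction vectors $u = w$, $v = cz$ (admissible since $|u-v| \leq C\sqrt\Delta$ and $|u|,|v|$ are bounded) yields $\sup_t d(\pi_i(L_{p_1,p_3}(t)),\pi_i(L_{p_1,p_4}(ct)))^i \leq C\sqrt\Delta$, hence $\sup_t d(\pi_i(L_{p_1,p_3}(t)),\pi_i(\overline{p_1,p_4}))^{2i} \leq C\Delta$ for each $i$. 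For the second sum, put $q_0 = L_{p_1,p_4}(c) = (cz,0,\dots,0) \in \overline{p_1,p_4}$. The key point is that $q_0^{-1}p_3$ has \emph{every} coordinate of Euclidean size $\leq C\sqrt\Delta$: its first layer is $w - cz$, already controlled, and its $j$-th layer is $p_{3,j}$ plus Baker--Campbell--Hausdorff terms in $cz$ and in $w,p_{3,2},\dots,p_{3,j-1}$; each such monomial either is built only from $\pm w$ and $cz$ (and vanishes once one rewrites $[cz,w] = [cz, w-cz]$, since its innermost bracket is then zero up to the small factor $w - cz$) or carries a factor $p_{3,k}$ with $k \geq 2$, bounded by $C\sqrt\Delta$. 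Hence $|q_0^{-1}p_3| \leq C\sqrt\Delta$. Since $L_{p_3,p_4}(t) = p_3 \cdot (t(z-w),0,\dots,0)$, $q_0 \cdot (t(1-c)z,0,\dots,0) = L_{p_1,p_4}(c+(1-c)t) \in \overline{p_1,p_4}$, and $|(z-w)-(1-c)z| = |cz-w| \leq C\sqrt\Delta$, Lemma~\ref{l:close-lines} with $g = p_3$, $h = q_0$, $\eta = C\sqrt\Delta$ and directions $u = z-w$, $v = (1-c)z$ gives $\sup_t d(\pi_i(L_{p_3,p_4}(t)),\pi_i(\overline{p_1,p_4}))^{2i} \leq C\Delta$ for each $i$. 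Summing both bounds over $i \in \{1,\dots,s\}$ (with $\ell^{2i-1} = 1$) and choosing $\mu$ small enough to meet every threshold used above (those of Lemmas~\ref{l:curvature} and~\ref{l:proj}, and $\eta < 1$) completes the proof.

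I expect the main obstacle to be the penultimate claim: that $q_0^{-1}p_3$ is Euclidean-close to the identity with error $\sqrt\Delta$, rather than the weaker $\Delta^{1/2s}$ one would read off directly from $d_{HS}(p_3,q_0)$. Getting this sharp power is precisely what makes the final bound $C\Delta$ instead of $C\Delta^{1/s}$ (which would be useless for the necessity proof in higher step), and securing it requires the layer-by-layer control of the coordinates of $p_3$ from the second step together with the cancellation in the Baker--Campbell--Hausdorff terms that comes from $w$ lying within $C\sqrt\Delta$ of the line $\R z$.
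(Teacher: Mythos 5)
Your proposal is correct and follows essentially the same route as the paper's proof: reduce to the case of small $\Delta$, apply Lemma \ref{l:curvature} to the triples $(p_1,p_2,p_3)$ and $(p_1,p_3,p_4)$, use Lemma \ref{l:proj} to produce $c\in[0,1]$ with $|w-cz|\lesssim\sqrt{\Delta}$, and then invoke Lemma \ref{l:close-lines} twice (once at $p_1$, once at $p_3$ against the point $(cz,0,\dots,0)$). The only cosmetic difference is that where the paper gets $p_3\in(cz,0,\dots,0)\cdot B_{\R^n}(C\sqrt{\Delta})$ by two applications of Lemma \ref{l:beta-balls} together with a product-of-balls containment, you re-derive that coordinate bound by hand via the BCH cancellation $[cz,w]=[cz,w-cz]$ --- same content, different packaging.
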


It may seem strange that $p_2$ does not appear in the left hand side of the conclusion but does for the right.  The $p_2$ will come from applying Lemma \ref{l:curvature} to the points $p_1,p_2,p_3$ to get a bound on $NH(p_1^{-1}p_3)$.  This, combined with bounds on $d_{HS}(\pi_1(p_3), \pi_1(\overline{p_1,p_4}))$ that we get from applying Lemma \ref{l:curvature} to $p_1,p_3,p_4$ allows us to bound how far $p_3$ is to the horizontal line segment $\overline{p_1,p_4}$ so that we can get the second term bound on $L_{p_3,p_4}$ via Corollary \ref{l:close-lines}.  This second term bound is required in the proof of Corollary \ref{c:curvature} that follows this proposition.

Note that it is impossible to derive such a bound using Lemma \ref{l:curvature} directly on $p_1,p_3,p_4$ as it only bounds the behavior of the projection $\pi_1(p_3)$.  To achieve this, we would need a stronger version of Lemma \ref{l:CLZ}, but we do not seek to do so as it is not necessary.

\begin{proof}


  We may translate and dilate the setting so that $p_1 = 0$ and $\ell=1$.  Lemma \ref{l:lines-bounds} gives that the left hand side is then bounded by $s2^{2s+1}$.  Fix a $\mu > 0$ to be determined. If $\Delta \geq \mu$, then we have that the inequality holds for $C = s2^{2s+1}/\mu$.  Thus, we may suppose $\Delta < \mu$.
  
  We first bound the $L_{p_1,p_3}$ term. 
  We start by applying Lemma \ref{l:CLZ} to the triple $p_1,p_3,p_4$ to get (with some overestimating) that
  \begin{align*}
    d_{HS}(\pi_1(p_3),\pi_1(\overline{0,p_4}))^2 \leq C_0\Delta.
  \end{align*}
  Choose $u,v \in B_{\R^{n_1}}(1)$ so that $\pi_1(p_3) = u$ and $\pi_1(p_4) = v$.  Recall that we assumed that $\Delta < \mu$ for some yet to be determined $\mu$.  We now choose $\mu$ sufficiently small so that $\sqrt{C_0\mu} < 1$.
  Recalling that $d_{HS}$ on $G_1$ is equivalent to the Euclidean metric $|\cdot|$, we see that there is a $\gamma \in [0,1]$ so that $|u-\gamma v| \leq \sqrt{C_0\Delta} < 1$.  Applying Corollary \ref{l:close-lines} to $g = h = p_1$ with vectors $u$ and $\gamma v$, we get that there exists $C_1 > 0$ so that
  \begin{align}
    \sum_{i=1}^s \sup_{t \in [0,1]} \frac{d_{HS}(\pi_i(L_{0,p_3}(t)),\pi_i(L_{0,p_4}(\gamma t)))^{2i}}{\ell^{2i-1}} \leq C_1 \Delta. \label{e:p1p3-prebound}
  \end{align}
  As $\gamma \in [0,1]$, we get that $L_{0,p_4}(\gamma t)$ is contained in $\overline{0,p_4}$ and, remembering that $p_1 = 0$, we get
  \begin{align}
    \sum_{i=1}^s \sup_{t \in [0,1]} \frac{d_{HS}(\pi_i(L_{p_1,p_3}(t)),\pi_i(\overline{p_1,p_4}))^{2i}}{\ell^{2i-1}} \leq C_1 \Delta, \label{e:p1p3}
  \end{align}
  which takes care of the $L_{p_1,p_3}$ term in the summand.

  As for the $L_{p_3,p_4}$ term, we again seek to apply Corollary \ref{l:close-lines}, but we first need to verify that $p_3$ lies close to $(\gamma v,0,...,0) \in \overline{p_1,p_4}$.  To do so, we first take $t = 1$ in \eqref{e:p1p3-prebound} to get that $\tilde{\pi}(p_3) = L_{p_1,p_3}(1)$ is close to $(\gamma v,0,...,0)$ in the following way:
  \begin{align}
    \sum_{i=1}^s d_{HS}(\pi_i(\tilde{\pi}(p_3)), \pi_i(\gamma v, 0,...,0))^{2i} \leq C_1\Delta. \label{e:tildep-bound-1}
  \end{align}
  Here, we used the fact that $\pi_1(p_4) = v$ and the fact that $p_1 = 0$, $\ell=1$.  Applying Lemma \ref{l:curvature} (for many $i$) on the triple $p_1,p_2,p_3$, we get
  \begin{align*}
    \sum_{i=2}^s NH(\pi_i(p_3))^{2i} \leq sC_0\Delta.
  \end{align*}
  Using the definition of $NH$ and the fact that $\tilde{\pi}(\pi_i(p_3)) = \pi_i(\tilde{\pi}(p_3))$ for all $i$, we then get that $p_3$ is close to $\tilde{\pi}(p_3)$ in the following way:
  \begin{align}
    \sum_{i=1}^s d_{HS}(\pi_i(p_3), \pi_i(\tilde{\pi}(p_3)))^{2i} \leq s C_0 \Delta. \label{e:tildep-bound-2}
  \end{align}
  We now combine \eqref{e:tildep-bound-1} and \eqref{e:tildep-bound-2} to get that $p_3$ is close to $(\gamma v,0,...,0)$ in the following way:
  \begin{align*}
    \sum_{i=1}^s d_{HS}(\pi_i(p_3), \pi_i(\gamma v,0,...,0))^i \leq \sqrt{ s\sum_{i=1}^s d_{HS}(\pi_i(p_3), \pi_i(\gamma v, 0,...,0))^{2i}} \leq \sqrt{s 2^{2s-1} (sC_0 + C_1)\Delta}.
  \end{align*}
  Now an application of Lemma \ref{l:beta-balls} gives that $p_3 \in (\gamma v,0,...,0) \cdot B_{\R^n}(C_2\sqrt{\Delta})$ for some $C_2$ depending only on $G$.
  
  We can finally apply Corollary \ref{l:close-lines} with points $p_3$ and $(\gamma v,0,...,0)$ and vectors $v - u$ and $(1-\gamma)v$ as we did to get \eqref{e:p1p3-prebound} and \eqref{e:p1p3} to get that
  \begin{align}
    \sum_{i=1}^s \sup_{t \in [0,1]} \frac{d_{HS}(\pi_i(L_{p_3,p_4}(t)),\pi_i(\overline{p_1,p_4}))^{2i}}{\ell^{2i-1}} \leq C_3 \Delta \label{e:p3p4}
  \end{align}
  for some $C_3 > 0$.  The proposition now follows from \eqref{e:p1p3} and \eqref{e:p3p4}.
\end{proof}

Note that Proposition \ref{p:curvature} can only bound the drift between horizontal line segments that share some endpoint (namely either $p_1$ or $p_4$).  In the sequel, we will also be considering pairs of line segments that do not necessarily share an endpoint.  We now combine Lemma \ref{l:curvature} and Proposition \ref{p:curvature} to derive the following useful corollary, which allows us to bound horizontal line segments that do not share any endpoints.

\begin{corollary} \label{c:curvature}
  For any $\lambda > 0$, there exists a constant $C > 0$ depending only on $d_{HS}$, $\lambda$, and the Euclidean structure $|\cdot|$ so that the following holds.  Let $p_1,p_2,p_3,p_4,p_5 \in G$ be points so that $\lambda \ell \leq d_{HS}(p_i,p_j) \leq \ell$ if $i \neq j$ for some $\ell > 0$.  Then
  \begin{align*}
    \sum_{k=1}^s \sup_{t \in [0,1]} \frac{d_{HS}(\pi_k(L_{p_3,p_4}(t))),\pi_k(\overline{p_1,p_5}))^{2k}}{\ell^{2k-1}} \leq C \Delta,
  \end{align*}
  where $\Delta = \sum_{k=2}^s \Delta_k(p_1,p_2,p_3,p_4,p_5)$.
\end{corollary}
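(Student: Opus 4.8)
\textbf{Proof proposal for Corollary \ref{c:curvature}.}

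The plan is to apply Proposition \ref{p:curvature} twice — once to the four points $p_1,p_2,p_3,p_4$ and once to the four points $p_1,p_3,p_4,p_5$ — and then to chain the resulting estimates, layer by layer, using the triangle inequality on each projection $\pi_i(G)$. Note first that every four-element subset of $\{p_1,\dots,p_5\}$ still satisfies the hypothesis $\lambda \ell \le d_{HS}(p_i,p_j) \le \ell$, so Proposition \ref{p:curvature} applies to both subconfigurations. Set
\begin{align*}
  \Delta_1 &:= d_{HS}(p_1,p_2) + d_{HS}(p_2,p_3) + d_{HS}(p_3,p_4) - d_{HS}(p_1,p_4), \\
  \Delta_2 &:= d_{HS}(p_1,p_3) + d_{HS}(p_3,p_4) + d_{HS}(p_4,p_5) - d_{HS}(p_1,p_5).
\end{align*}
A one-line application of the triangle inequality gives $\Delta - \Delta_1 = d_{HS}(p_1,p_4) + d_{HS}(p_4,p_5) - d_{HS}(p_1,p_5) \ge 0$ and $\Delta - \Delta_2 = d_{HS}(p_1,p_2) + d_{HS}(p_2,p_3) - d_{HS}(p_1,p_3) \ge 0$, so $0 \le \Delta_1 \le \Delta$ and $0 \le \Delta_2 \le \Delta$. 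Proposition \ref{p:curvature} applied to $p_1,p_2,p_3,p_4$ (keeping the $L_{p_3,p_4}$ term of its conclusion) bounds $\sum_i \sup_t d_{HS}(\pi_i(L_{p_3,p_4}(t)),\pi_i(\overline{p_1,p_4}))^{2i}/\ell^{2i-1}$ by $C\Delta_1 \le C\Delta$; and applied to $p_1,p_3,p_4,p_5$, with $L_{p_1,p_4}$ and $\overline{p_1,p_5}$ playing the roles of $L_{q_1,q_3}$ and $\overline{q_1,q_4}$, it bounds $\sum_i \sup_\tau d_{HS}(\pi_i(L_{p_1,p_4}(\tau)),\pi_i(\overline{p_1,p_5}))^{2i}/\ell^{2i-1}$ by $C\Delta_2 \le C\Delta$.

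Now fix $i \in \{1,\dots,s\}$ and $t \in [0,1]$. Since $\pi_i$ is continuous, $\overline{p_1,p_4}$ is compact, so there is $\tau = \tau(i,t) \in [0,1]$ with $d_{HS}(\pi_i(L_{p_3,p_4}(t)),\pi_i(L_{p_1,p_4}(\tau))) = d_{HS}(\pi_i(L_{p_3,p_4}(t)),\pi_i(\overline{p_1,p_4}))$. Because $N_{HS}$ restricts to a genuine (subadditive) homogeneous norm on $\pi_i(G)$, the quantity $d_{HS}(\pi_i(\cdot),\pi_i(\cdot))$ is a metric there, and distance to the compact set $\pi_i(\overline{p_1,p_5})$ obeys the triangle inequality; hence
\begin{align*}
  d_{HS}(\pi_i(L_{p_3,p_4}(t)),\pi_i(\overline{p_1,p_5})) \le d_{HS}(\pi_i(L_{p_3,p_4}(t)),\pi_i(L_{p_1,p_4}(\tau))) + d_{HS}(\pi_i(L_{p_1,p_4}(\tau)),\pi_i(\overline{p_1,p_5})).
\end{align*}
Raising to the power $2i$, using $(x+y)^{2i} \le 2^{2i-1}(x^{2i}+y^{2i})$, dividing by $\ell^{2i-1}$, taking $\sup_{t\in[0,1]}$ and summing over $i$, the first term is controlled by the first estimate of the previous paragraph (by the choice of $\tau$), while the second term, being bounded by the supremum over all $\tau$, is controlled by the second estimate. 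This gives
\begin{align*}
  \sum_{i=1}^s \sup_{t\in[0,1]} \frac{d_{HS}(\pi_i(L_{p_3,p_4}(t)),\pi_i(\overline{p_1,p_5}))^{2i}}{\ell^{2i-1}} \le 2^{2s}C\Delta,
\end{align*}
which is the assertion with the new constant $2^{2s}C$, depending only on $G$ and $\lambda$.

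There is no real obstacle here beyond organizing the two applications of Proposition \ref{p:curvature}. The two points to get right are: (i) checking that the triangle-inequality excesses $\Delta_1,\Delta_2$ of the chosen subconfigurations are each dominated by $\Delta$, which is exactly where the ``extra'' point $p_2$ and the hidden point $p_4$ get absorbed; and (ii) noting that the closest-point parameter $\tau$ is allowed to depend on the layer $i$ — harmless, since the final bound is a sum over $i$ of purely layerwise estimates. (One could equally replace the first application of Proposition \ref{p:curvature} by Lemma \ref{l:curvature} together with Lemma \ref{l:close-lines} to pass from points to the full segment $L_{p_3,p_4}$, but invoking Proposition \ref{p:curvature} directly is cleaner.)
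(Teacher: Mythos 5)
Your proof is correct, and its skeleton matches the paper's: both arguments isolate the segment $\overline{p_1,p_4}$ as an intermediary, bound $L_{p_3,p_4}$ against $\overline{p_1,p_4}$ by applying Proposition \ref{p:curvature} to $p_1,p_2,p_3,p_4$, bound $L_{p_1,p_4}$ against $\overline{p_1,p_5}$, and finish with the pointwise triangle inequality on each $\pi_i(G)$ together with $(a+b)^{2i}\le 2^{2i-1}(a^{2i}+b^{2i})$ (a step the paper leaves implicit and you spell out carefully, including the harmless dependence of the closest-point parameter on the layer $i$). The one place you genuinely diverge is the middle estimate: the paper handles $\sup_t d_{HS}(\pi_i(L_{p_1,p_4}(t)),\pi_i(\overline{p_1,p_5}))$ by going back to the three-point Lemma \ref{l:curvature} applied to $p_1,p_4,p_5$ (with excess $\Delta_1=d_{HS}(p_1,p_4)+d_{HS}(p_4,p_5)-d_{HS}(p_1,p_5)$), then running the projection and Lemma \ref{l:close-lines} argument again by hand; you instead reuse Proposition \ref{p:curvature} as a black box on the quadruple $p_1,p_3,p_4,p_5$, reading off its $L_{q_1,q_3}$ term. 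Your route is cleaner in that it avoids repeating the Lemma \ref{l:close-lines} machinery, at the mild cost that your two excesses no longer sum exactly to $\Delta$ (the paper's $\Delta_1+\Delta_2=\Delta$), but your triangle-inequality verifications that each excess is individually at most $\Delta$ are correct and entirely sufficient, since only the final constant is affected. All hypotheses check out: every four-point subconfiguration inherits $\lambda\ell\le d_{HS}\le\ell$, and $d_{HS}$ restricted to $\pi_i(G)$ is a genuine metric, so the point-to-set triangle inequality you invoke is valid.
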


Like Proposition \ref{p:curvature}, the presence of $p_2$ on the right hand side cannot be removed without a stronger version of Lemma \ref{l:CLZ}.

\begin{proof}
  We dilate the setting so that $\ell = 1$.  By Lemma \ref{l:lines-bounds}, the left hand side of our desired inequality is then bounded by $s2^{2s}$.  Define
  \begin{align*}
    \Delta^{(1)} &= \sum_{k=2}^s \Delta_k(p_1,p_4,p_5), \\
    \Delta^{(2)} &= \sum_{k=2}^s \Delta_k(p_1,p_2,p_3,p_4).
  \end{align*}
  Then $\Delta^{(1)}, \Delta^{(2)} \geq 0$ and $\Delta^{(1)} + \Delta^{(2)} = \Delta$.  Let $C_0$ be the constant from Lemma \ref{l:curvature}.  If $C_0\Delta^{(1)} \geq 1$, then the corollary is satisfied by taking $C = s2^{2s}C_0$.  Thus, we may assume $C_0\Delta^{(1)} < 1$.

  We begin by showing $L_{p_1,p_4}$ is close to $\overline{p_1,p_5}$.  We first apply Lemma \ref{l:curvature} to $p_1,p_4,p_5$ and use the fact that $\ell=1$ to get
  \begin{align}
    d_{HS}(\pi_1(p_4),\pi_1(\overline{p_1,p_5}))^2  \leq C_0\Delta^{(1)}. \label{e:close1}
  \end{align}
  Let $u = \pi_1(p_1^{-1}p_4)$ and $v = \pi_1(p_1^{-1}p_5)$.  By \eqref{e:close1}, there is some $\gamma \in [0,1]$ so that $|u - \gamma v| \leq \sqrt{C_0 \Delta^{(1)}} < 1$.  We now apply Corollary \ref{l:close-lines} to $g = h = p_1$ and vectors $u,\gamma v$ with $\ell = 1$ so that
  \begin{align*}
    \max_{1 \leq k \leq s} \sup_{t \in [0,1]} d_{HS}(\pi_k(L_{p_1,p_4}(t)), \pi_k(\overline{p_1,p_5}))^{2k} \leq C_1 \Delta^{(1)} \leq C_1 \Delta,
  \end{align*}
  which gives
  \begin{align*}
    \sum_{k = 1}^s \sup_{t \in [0,1]} d_{HS}(\pi_k(L_{p_1,p_4}(t)), \pi_k(\overline{p_1,p_5}))^{2k} \leq s C_1 \Delta.
  \end{align*}
  
  We now show that $L_{p_3,p_4}$ is close to $\overline{p_1,p_4}$ by applying Proposition \ref{p:curvature} to $p_1,p_2,p_3,p_4$ to get
  \begin{align*}
    \sum_{k=1}^s \sup_{t \in [0,1]} d_{HS}(\pi_k(L_{p_3,p_4}(t))),\pi_k(\overline{p_1,p_4}))^{2k} \leq C_2 \Delta^{(2)} \leq C_2 \Delta.
  \end{align*}
  The last two inequalities, together with the triangle inequality and the inequality $(a+b)^p \leq 2^{p-1}(a^p + b^p)$ give the corollary.
\end{proof}

\subsection{Balls, filtrations, multiresolutions}
In this subsection, we will discretize the Carleson integral and reduce to a parametric setting.  Most of the setup will resemble sections 2.2-2.5 of \cite{li-schul-1} which go over the construction at much greater detail, including choices of parameters which still work in our setting.  We thus refer the reader to those sections for complete details.

In this section, we will endow $G$ with $d = d_\infty$.  Thus, all balls and diameters will be defined in terms of this metric.  The $d_\infty$ gives us a way to prove Lemma \ref{l:long-line} in Section \ref{s:flat} but is otherwise inconsequential.  We do not know if Lemma \ref{l:long-line} can be proven using $d_{HS}$.  Given a ball $B = B(x,r)$ and $\lambda > 0$, we define $\lambda B := B(x,\lambda r)$.

Let $\Gamma$ be a rectifiable curve in $G$.  Recall that we have $\diam(\Gamma) = 1$.  We may parameterize $\Gamma$ via a surjective 1-Lipschitz function $\gamma : \mathbb{T} \to \Gamma$ where $\mathbb{T}$ is a circle in $\R^2$ with circumference $C\Hd^1(\Gamma)$ where $C > 0$ depends on the metric $d$.  Indeed, this follows from the appendix of \cite{Schul-TSP} (see especially Lemma 3.7 of the appendix).  Note that although \cite{Schul-TSP} is proven for a geodesic metric on the Hilbert space, the proof also works for quasi-convex metrics on generic metric spaces by using \eqref{eq:qc} to connect points.  The Lipschitz bound on the parameterization will change depending on the quasi-convexity constant.

We fix a direction of flow along $\mathbb{T}$. By a subarc $\tau$ of $\gamma$, we mean a restriction of $\gamma$ onto a subinterval of $\mathbb{T}$.  We define set relations between two subarcs $\tau$ and $\zeta$ by using the set relation for their defining intervals.  However, the diameter of a subarc is the diameter of the {\it image}.

For each $n \in \N$, let $Y_n \subset \Gamma$ be a maximal $2^{-n}$ separated net\footnote{Recall an $\alpha$-separated net of $X$ is a subset $A \subset X$ for which $d(x,y) \geq \alpha$ for all $x,y \in A$.} and define the following multiresolution families of $\Gamma$
\begin{align*}
  \hat{\cG} &= \{B(x, 10 \cdot 2^{-n}) : x \in Y_n, n \in \Z\}, \\
  \cG &= \{B(x, 10 \cdot 2^{-n}) : x \in Y_n, n \in \N\}.
\end{align*}
Given a ball $B = B(x,r)$, we define $\hat{\beta}_\Gamma(B) := \hat{\beta}_\Gamma(x,r)$.

A straightforward modification of Lemma 2.6 of \cite{li-schul-1} shows that to prove Theorem \ref{th:necessity}, it suffices to prove
\begin{align*}
  \sum_{B \in \hat{\cG}} \hat{\beta}_\Gamma(B)^{2s} \diam(B) \leq C \Hd^1(\Gamma).
\end{align*}

\begin{lemma}
  There exists a constant $C > 0$ depending only on the metric structure of $G$ so that
  \begin{align*}
    \sum_{B \in \hat{\cG} \setminus \cG} \hat{\beta}_\Gamma(B)^{2s} \diam(B) \leq C \Hd^1(\Gamma).
  \end{align*}
\end{lemma}

\begin{proof}
  By picking any horizontal line $L$ that intersects $\Gamma$, we have for $B = B(x,10 \cdot 2^{-n}) \in \hat{\cG} \setminus \cG$ that
  \begin{align*}
    \hat{\beta}_\Gamma(B)^{2s} \leq \sum_{i=1}^s \left( \frac{\diam(\Gamma)}{10 \cdot 2^{-n}} \right)^{2i} \leq s2^{2n}.
  \end{align*}
  Recall that $\diam(\Gamma) = 1$ and $n \leq 0$.  As $G$ is a doubling metric space and $Y_n$ are $2^{-n}$-separated nets, a standard packing argument gives that
  \begin{align*}
    \sup_{n \leq 0} \# \{x \in Y_n : B(x,10 \cdot 2^{-n}) \cap \Gamma \neq \emptyset\} \leq M
  \end{align*}
  for some $M$ depending only on the metric structure of $G$.  Thus,
  \begin{align*}
    \sum_{B \in \hat{\cG} \setminus \cG} \hat{\beta}_\Gamma(B)^{2s} \diam(B) \leq \sum_{n \leq 0} M s2^{2n} \cdot 20 \cdot 2^{-n} \leq 40Ms \diam(\Gamma) \leq 40Ms \Hd^1(\Gamma).
  \end{align*}
\end{proof}

By the previous lemma, we see that it now suffices to prove that
\begin{align}
  \sum_{B \in \cG} \hat{\beta}_\Gamma(B)^{2s} \diam(B) \leq C \Hd^1(\Gamma). \label{e:hb-bound}
\end{align}
This is our goal for the remainder of the section.

Fix $J \in \N$.  We decompose $\cB := \{2B : B \in \cG\}$ into $D'$ families $\{\cB_i\}$ where for any two distinct balls $B_1,B_2 \in \cB_i$,
\begin{enumerate}
  \item $r(B_1)/r(B_2) \in 2^{J\Z}$,
  \item if $r(B_1) = r(B_2) = r$, then $d(B_1,B_2) > 3r$.
\end{enumerate}
In the second condition, we use the usual notion of distance between sets:
\begin{align*}
  d(B_1,B_2) = \inf_{x \in B_1, y \in B_2} d(x,y).
\end{align*}
By Lemma 2.11 of \cite{li-schul-1} (where we fix $C = 20$ and $\kappa = 3$), we have that $D'$ is bounded.
\begin{lemma}[Lemma 2.11 of \cite{li-schul-1}] \label{l:separate-scales}
  We may take $D' = D \cdot J$ where $D$ is some constant depending only on $G$.
\end{lemma}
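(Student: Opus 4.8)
This is the decomposition lemma of \cite{li-schul-1}, and I would reprove it here exactly as there. The plan is to obtain the families by two independent refinements of the scale collection $\cB$: one that forces condition (1) by separating scales modulo $J$, and one that forces condition (2) by a greedy coloring at each fixed scale whose number of colors is controlled purely by the doubling geometry of $G$. The product of the two counts is the asserted $D\cdot J$.

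First I would record that every ball of $\hat{\cG}$, hence of $\cB = \{2B : B \in \hat{\cG}\}$, has radius of the form $20\cdot 2^{-n}$ for some $n \in \N$, with center in the maximal $2^{-n}$-separated net $Y_n \subset \Gamma$. Grouping balls according to the residue of $n$ modulo $J$ immediately yields condition (1): if $2B_1, 2B_2$ fall in the same residue class, with radii $20\cdot 2^{-n_1}$ and $20\cdot 2^{-n_2}$, then $n_1 \equiv n_2 \pmod J$, so $r(B_1)/r(B_2) = 2^{\,n_2-n_1} \in 2^{J\Z}$. This costs a factor $J$.

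Next I would fix a scale $n$ and refine the net $Y_n$. Since the Lebesgue measure on $G$ is a Haar measure and the automorphisms $\delta_\lambda$ scale it by $\lambda^Q$ while scaling $d_\infty$ by $\lambda$, the metric space $(G, d_\infty)$ is Ahlfors $Q$-regular, and a standard volume-packing estimate bounds, uniformly in $n$, the number of points of the $2^{-n}$-separated set $Y_n$ lying within distance $M\cdot 2^{-n}$ of any fixed point of $Y_n$ by a constant depending only on $G$ and $M$. Here $M$ is an absolute constant chosen large enough --- depending only on the fixed parameters $C=20$ and $\kappa=3$ --- that $M\cdot 2^{-n}$-separation of two scale-$n$ centers forces the corresponding balls of $\cB$ to satisfy condition (2). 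Thus the graph on $Y_n$ joining two points precisely when their distance is at most $M\cdot 2^{-n}$ has maximum degree below some $D = D(G)$, and a greedy coloring splits $Y_n = Y_n^1 \sqcup \cdots \sqcup Y_n^{D}$ into $D$ pieces, each $M\cdot 2^{-n}$-separated.

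Finally I would set, for $k \in \{0,\dots,J-1\}$ and $j \in \{1,\dots,D\}$,
\[
  \cB_{j,k} := \{\, 2B \in \cB : B = B(x, 10\cdot 2^{-n}),\ x \in Y_n^j,\ n \equiv k \pmod J \,\}.
\]
These $D\cdot J$ families partition $\cB$; any two distinct members of $\cB_{j,k}$ have radius ratio in $2^{J\Z}$ by the first refinement, and if they share a radius $20\cdot 2^{-n}$ their centers lie in the common $M\cdot 2^{-n}$-separated set $Y_n^j$, so condition (2) holds. Hence $D' = D\cdot J$ works with $D$ depending only on $G$. The only step requiring genuine care is the packing estimate in the third paragraph, i.e.\ turning the Ahlfors $Q$-regularity of $(G,d_\infty)$ into a degree bound for the nets $Y_n$ that is uniform over all scales; the remaining assembly is bookkeeping.
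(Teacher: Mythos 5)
Your argument is correct and is essentially the same as the one behind the cited result: the paper gives no proof here, deferring to Lemma 2.11 of \cite{li-schul-1}, whose proof is exactly this combination of separating scales modulo $J$ (the factor $J$) with a packing/greedy-coloring bound at each fixed scale, the latter coming from the Ahlfors $Q$-regularity (doubling) of the homogeneous metric, which gives the factor $D = D(G)$. Your bookkeeping of the constants (radii $20\cdot 2^{-n}$, centers in the $2^{-n}$-separated nets $Y_n$, and $M$ chosen so that $M\cdot 2^{-n}$-separation of centers forces $d(B_1,B_2)>3r$) is sound, so nothing needs changing.
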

We will do the decomposition with $J = 100$ so that $D'$ is a constant depending only on $G$.  Let $\cG^1 \cup ... \cup \cG^{D'}$ denote the decomposition of $\cG$ that follows from the decomposition $\cB_1 \cup ... \cup \cB_{D'}$.  We then apply Lemma 2.12 of \cite{li-schul-1} with $J$ and $\kappa = 3$ to each $\cB_i$, to produce a family of open ``dyadic cubes'' $\{\Delta(\cB,i)\}_{i=1}^{D'}$ so that
\begin{enumerate}
  \item for all $B \in \cG^i$, there exists $Q = Q(B) \in \Delta(\cB,i)$ for which $2B \subset Q \subset 2(1 + 2^{-J+2})B$,
  \item if $B,B' \in \cG^i$ is so that $r(B) < r(B')$ and $Q(B) \cap Q(B') \neq \emptyset$, then $Q(B') \supseteq Q(B)$,
  \item if $B,B' \in \cG^i$ is so that $r = r(B) = r(B')$ and $B \neq B'$, then $d(Q(B),Q(B')) > 2r$.
\end{enumerate}
Note that properties (1) and (3), together with the $2^J$ separation of scales of $\cG^i$, show that the map $B \mapsto Q(B)$ is injective.

For each $B \in \cG^i$, we define
\begin{align*}
  \Lambda(B) = \{\tau = \gamma|_I : &~I \subset \mathbb{T} \text{ is a connected component of } \gamma^{-1}(\Gamma \cap Q(B)) \text{ and } \gamma(I) \cap B \neq \emptyset\}.
\end{align*}
Thus, $\Lambda(B)$ is a collection of subarcs $\gamma|_I$.

Lemma 2.17 of \cite{li-schul-1} tells us that each $\cF^{0,i} = \bigcup_{B \in \cG^i} \Lambda(B)$ is a {\it prefiltration}, that is, for each $i$ there exists some $L_i > 0$ so that one has the decomposition $\cF^{0,i} = \bigcup_{n \geq 1} \cF^{0,i}_n$ into collections of arcs such that
\begin{enumerate}[label=(\arabic*)]
  \item For $\tau \in \cF_n^{0,i}$, $L_i2^{-nJ} \leq \diam(\tau) \leq L_i2^{-nJ+3}$,
  \item For distinct $\tau_1, \tau_2 \in \cF_n^{0,i}$, we must have $\tau_1 \cap \tau_2 = \emptyset$,
  \item If $\tau \in \cF_n^{0,i}$ and $\zeta \in \cF_{n+k}^{0,i}$ for $k > 0$, then either $\tau \cap \zeta = \emptyset$ or $\zeta \subset \tau$. 
\end{enumerate}
Recall that $J = 100$ is the same constant as before and set relation for arcs is defined by the set relation of their defining intervals.  We will not really deal with prefiltrations.  Instead, we use Lemma 2.13 of \cite{li-schul-1} to construct, for each prefiltration $\cF^0$ in $\{\cF^{0,i}\}_i$, a collection of arcs $\cF = \bigcup_n \cF_n$ so that
\begin{enumerate}
  \item For any $\tau \in \cF_n$, $L_i2^{-100n-10} \leq \diam(\tau) \leq L_i2^{-100n+4}$,
  \item For $\tau_1,\tau_2 \in \cF_n$, they are either disjoint, identical, or intersect in one or both of their endpoints,
  \item For $\zeta \in \cF_{n+1}$, there is a unique element $\tau \in \cF_n$ so that $\zeta \subset \tau$,
  \item For all $n$, $\bigcup_{\tau \in \cF_n} \tau = \mathbb{T}$,
  \item For each $\tau^0 \in \cF^0_n$, there is an element $\tau \in \cF_n$ so that $\tau \supset \tau^0$, and moreover if $I_0$ and $I$ are domains of $\tau^0$ and $\tau$, respectively, then the image under $\gamma$ of any connected component of $I \backslash I_0$ has diameter no more than $L_i2^{-100n-10}$.  Recall that $I, I_0$ are intervals,
  \item Different $\tau_0,\tau_1 \in \cF_n^0$ give rise to different arcs in $\cF_n$.
\end{enumerate}
By property (4), we are saying that the union of the domains of the subarcs is the entirety of $\mathbb{T}$.

Such collections of arcs are called {\it filtrations}, so we get $D'$ different filtrations.  Given a subarc $\tau$, we let $I_\tau \subseteq \mathbb{T}$ denote its parameterizing space.

To recap, we first construct our multiresolution family $\cG$ from the maximal $2^{-n}$-separated nets $Y_n$.  We then decomposed $\cG$ into $D'$ families of balls $\cG^i$, each with good scale separation.  For each $\cG^i$, we constructed families of dyadic cubes $\Delta(\cB,i)$ which we used to get our prefiltrations $\cF^{0,i}$.  Finally, we used each $\cF^{0,i}$ to construct a family of filtrations $\cF^i$.

As property (5) of filtrations tells us that each $\tau \in \cF^{0,i}$ gives rise to some $\tau' \in \cF^i$ of the same ``generation'', we define
\begin{align*}
  \Lambda'(B) = \{\tau' : \tau \in \Lambda(B)\}.
\end{align*}

We record the following lemma which controls the diameter of arcs in filtrations relative to the ball they come from.

\begin{lemma}
  If $\tau \in \Lambda'(B)$, then
  \begin{align}
    \frac{1}{2} \diam(B) \leq \diam(\tau) \leq \diam(3B) = 3\diam(B). \label{e:curve-diam-bound}
  \end{align}
\end{lemma}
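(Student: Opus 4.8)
The plan is to unwind the definition of $\Lambda'(Q(B))$ back to the prefiltration and then invoke the single property of filtrations (property (5)) that controls how far a filtration arc can extend past the prefiltration arc it is built from. So let $\tau' \in \Lambda'(Q(B))$. By construction $\tau'$ is the arc of $\cF^i$ associated to some prefiltration arc $\tau \in \Lambda(Q(B)) \subseteq \cF^{0,i}$, and if $\tau \in \cF^{0,i}_k$ then $\tau' \in \cF^i_k$. Writing $I_\tau \subseteq I_{\tau'} \subseteq \mathbb{T}$ for the parameterizing subarcs, property (5) says that the $\gamma$-image of each connected component of $I_{\tau'} \setminus I_\tau$ has diameter at most $L 2^{-100k - 10}$.

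First I would compare $\diam(\tau')$ with $\diam(\tau)$. Since $I_\tau \subseteq I_{\tau'}$ are subarcs of the circle, $I_{\tau'} \setminus I_\tau$ has at most two connected components, and the closure of the $\gamma$-image of each contains an endpoint of $\tau = \gamma(I_\tau)$; thus $\gamma(I_{\tau'})$ is the union of $\tau$ with at most two sets of diameter $\le L 2^{-100k-10}$, each meeting $\tau$, and applying $\diam(A \cup A') \le \diam(A) + \diam(A')$ for $A \cap A' \ne \emptyset$ gives
\begin{align*}
  \diam(\tau') = \diam(\gamma(I_{\tau'})) \le \diam(\tau) + 2 L 2^{-100k-10}.
\end{align*}
Property (1) in the definition of prefiltration gives $\diam(\tau) \ge L 2^{-100k}$, so $2 L 2^{-100k - 10} = 2^{-9} L 2^{-100k} \le 2^{-9}\diam(\tau)$, and hence $\diam(\tau') \le (1 + 2^{-9})\diam(\tau)$.

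Next I would bound $\diam(\tau)$ by $\diam(B)$. Since $\tau \in \Lambda(Q(B))$, its domain is a connected component of $\gamma^{-1}(\Gamma \cap Q(B))$, so $\gamma(I_\tau) \subseteq Q(B)$ and $\diam(\tau) \le \diam(Q(B))$. Property (1) of the dyadic cubes gives $Q(B) \subseteq 2(1 + 2^{-98})B$, and since $d = d_\infty$ is a homogeneous metric --- so $\delta_\lambda$ scales distances by $\lambda$, whence $\diam(\lambda B) = \lambda \diam(B)$ for any $\lambda > 0$ --- we obtain $\diam(Q(B)) \le 2(1 + 2^{-98})\diam(B)$. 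Combining,
\begin{align*}
  \diam(\tau') \le (1 + 2^{-9}) \cdot 2(1 + 2^{-98}) \diam(B) < 3\diam(B) = \diam(3B),
\end{align*}
the last equality again being homogeneity of the metric; this is exactly the asserted chain $\diam(\tau') \le \diam(3B) \le 3\diam(B)$. I do not expect any genuine obstacle here: the only points to get right are that the drift exponent $-100k-10$ beats the prefiltration lower-bound exponent $-100k$ at the same level $k$ (so the drift is a small fixed fraction of $\diam(\tau)$) and that the numerical constant $(1+2^{-9})\cdot 2(1+2^{-98})$ is safely below $3$.
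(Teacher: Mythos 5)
Your proof is correct and follows exactly the route the paper has in mind: the paper offers no written argument, asserting the bound is ``immediate from the definition of filtration (especially Property (5))'', and your write-up is precisely that argument spelled out --- Property (5) shows $\tau'$ exceeds its prefiltration arc $\tau$ by at most $2L2^{-100k-10} \leq 2^{-9}\diam(\tau)$, while $\tau \subset Q(B) \subset 2(1+2^{-98})B$ gives $\diam(\tau) \leq 2(1+2^{-98})\diam(B)$, and the resulting constant is comfortably below $3$. The only implicit convention you rely on (as does the paper's notation) is that the constant $L$ in the prefiltration and filtration properties is the same, which is how the construction cited from Li--Schul is set up.
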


\begin{proof}
  As $\lambda B = x\delta_\lambda(x^{-1}B)$ where $x$ is the center of $B$, we get that $\diam(\lambda B) = \lambda \diam(B)$ when $\diam$ is computed with any metric that scales with dilation.

  Let $\tau^0 \in \Lambda(B)$ be the subarc that gave rise to $\tau$ so that $\tau \supseteq \tau^0$.  We have that
  \begin{align*}
    \frac{1}{2} \diam(B) \leq \diam(\tau^0) \leq 2(1+2^{-98})\diam(B).
  \end{align*}
  The upper bound comes from the fact that the image of $\tau^0$ lies in $Q(B)$ and property (1) of cubes.  The lower bound comes from the fact that $\tau^0$ goes from inside $B$ to the boundary of $Q(B) \supset 2B$.  The lower bound on $\diam(\tau)$ now follows from the fact that $\tau$ contains $\tau^0$.

  We now derive the upper bound.  Let $B \in \cG^i$.  We first note that property (1) of prefiltrations and our upper bound on $\tau^0$ gives for some minimal $n$ that
  \begin{align*}
    L_i2^{-100n} \leq \diam(\tau^0) \leq 2(1+2^{-98}) \diam(B)
  \end{align*}
  Now let $I,I^0$ be the domains of $\tau$ and $\tau^0$, and let $I'$ be a connected interval of $I \setminus I^0$.  By property (5) of filtrations, we have that
  \begin{align*}
    \diam(\tau|_{I'}) \leq L_i2^{-100n-10} < \frac{1}{4} \diam(B).
  \end{align*}
  Finally, as the image of $\tau|_{I^0} = \tau^0$ lies in $Q(B) \subseteq 2(1+2^{-98}) B$, we get the upper bound of $\diam(3B)$ on $\diam(\tau)$.
\end{proof}

Given a curve $\tau$, we let $a(\tau)$ and $b(\tau)$ denote the endpoints of the domain where $a(\tau) < b(\tau)$.  We also define
\begin{align*}
  L_\tau := \overline{\gamma(a(\tau)),\gamma(b(\tau))},
\end{align*}
and
\begin{align*}
  \beta_i(\tau) &:= \sup_{t \in I_\tau} \frac{d(\pi_i(\gamma(t)),\pi_i(L_\tau))}{\diam(\tau)}, \quad i \in \{1,...,s\}, \\
  \hat{\beta}(\tau)^{2s} &:= \sum_{i=1}^s \beta_i(\tau)^{2i}.
\end{align*}
We define
\begin{align*}
  \cG_2 &:= \{B \in \cG : \hat{\beta}(\tau) < \epsilon_0 \hat{\beta}_\Gamma(B) ~\forall \tau \in \Lambda'(B)\}, \\
  \cG_1 &:= \cG \backslash \cG_2.
\end{align*}
Here $\epsilon_0 > 0$ will be a sufficiently small constant that we choose in subsection \ref{s:flat}.

Balls of $\cG_1$ have one curve whose ``wigglyness'' captures most of $\hat{\beta}_\Gamma(B)$ whereas curves of $\cG_2$ are mostly flat, but it is the combination of two or more of these curves that gives $\hat{\beta}_\Gamma(B)$.  See Figure \ref{g:balls} for representative images.  We call the families $\cG_1$ and $\cG_2$ non-flat and flat balls, respectively.
\begin{figure} 
  \includegraphics[scale=0.3]{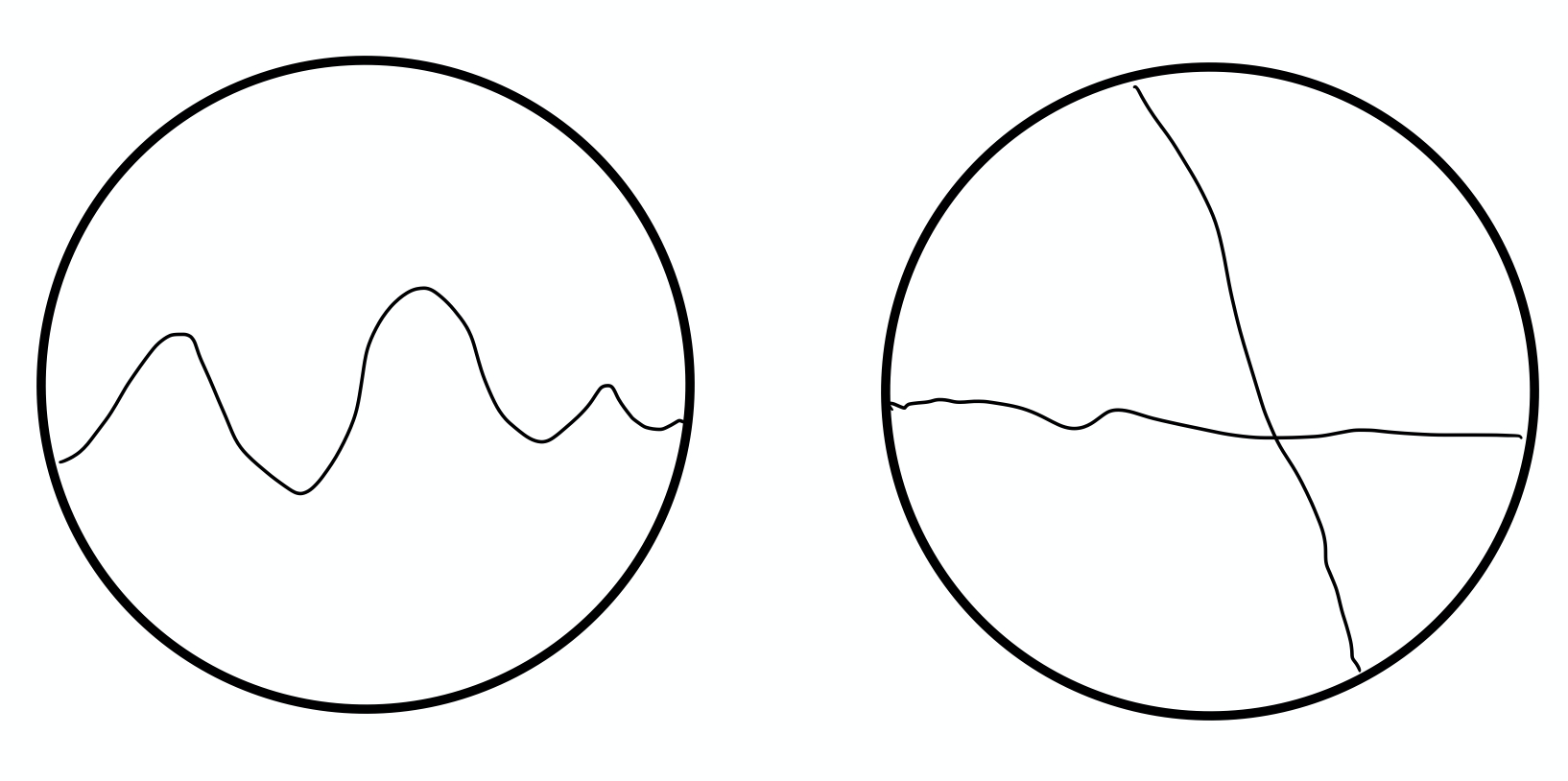}
  \caption{Balls of type $\cG_1$ and $\cG_2$.  In the second ball, the curves should be close to horizontal lines.}
  \label{g:balls}
\end{figure}

To prove \eqref{e:hb-bound}, it now suffices to prove
\begin{align}
  \sum_{B \in \cG_1} \hat{\beta}_\Gamma(B)^{2s} \diam(B) &\leq C\Hd^1(\Gamma), \label{e:nonflat} \\
  \sum_{B \in \cG_2} \hat{\beta}_\Gamma(B)^{2s} \diam(B) &\leq C\Hd^1(\Gamma). \label{e:flat}
\end{align}
We will prove \eqref{e:nonflat} in the following subsection and \eqref{e:flat} in the subsection after.

\subsection{Non-flat balls} \label{s:nonflat}
The main goal of this subsection will be to prove the following proposition:
\begin{proposition} \label{p:tb-bound}
  There exists a $C > 0$ depending only on $d$ so that for any filtration $\cF$ and $i \in \{1,...,s\}$, we have
  \begin{align*}
    \sum_{\tau \in \cF} \beta_i(\tau)^{2i} \diam(\tau) \leq C\Hd^1(\Gamma).
  \end{align*}
\end{proposition}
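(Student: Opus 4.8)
The plan is to run the (by now standard) Jones--Schul telescoping argument, in the form used for Carnot groups in \cite{li-schul-1}, feeding in the stratified curvature bounds of Lemma~\ref{l:curvature}, Proposition~\ref{p:curvature} and Corollary~\ref{c:curvature} together with the almost-parallel drift estimate of Lemma~\ref{l:close-lines} in place of the Euclidean Pythagorean identity. We fix $i\in\{1,\dots,s\}$; the exponent $2i$ is exactly what makes $\beta_i(\cdot)^{2i}$ obey a ``quadratic'' telescoping once the $i$-th layer is read through the $i$-th power of the metric, which is what those lemmas make precise.

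\emph{Step 1: the filtration builds up the length.} I would first record that for each generation $k$ the arcs of $\cF_k$ tile $\mathbb T$ and those of $\cF_{k+1}$ refine them, so with $\ell_k:=\sum_{\tau\in\cF_k}d(\gamma(a(\tau)),\gamma(b(\tau)))$ the triangle inequality gives $\ell_k\le\ell_{k+1}$, while $1$-Lipschitzness of $\gamma$ and disjointness of the $I_\tau$ give $\ell_k\le|\mathbb T|=32\,\Hd^1(\Gamma)$. Setting $\mathrm{exc}(\tau):=\big(\sum_\zeta d(\gamma(a(\zeta)),\gamma(b(\zeta)))\big)-d(\gamma(a(\tau)),\gamma(b(\tau)))\ge 0$, the sum over children $\zeta\in\cF_{k+1}$ of $\tau\in\cF_k$, this gives $\sum_{\tau\in\cF}\mathrm{exc}(\tau)\le 32\,\Hd^1(\Gamma)$. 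Since inserting extra vertices only lengthens a polygon, the excess over the chord $\gamma(a(\tau))\gamma(b(\tau))$ of any polygon through finitely many points of $\gamma(I_\tau)$ of generations $\le k+m$ is then at most $\sum_{\sigma\subseteq\tau,\ k\le\mathrm{gen}(\sigma)<k+m}\mathrm{exc}(\sigma)$.

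\emph{Step 2: charge $\beta_i(\tau)^{2i}\diam(\tau)$ to local excesses.} For $\tau\in\cF_k$, let $t_\tau\in I_\tau$ realize the supremum in $\beta_i(\tau)$. I would follow the unique chain $\tau=\tau^{(0)}\supset\tau^{(1)}\supset\cdots$ of arcs through $t_\tau$ down to the first $m$ with $\diam(\tau^{(m)})\le c_0\,\beta_i(\tau)\diam(\tau)$, so $m\asymp\log\!\big(1/\beta_i(\tau)\big)$; $1$-Lipschitzness places $\gamma(t_\tau)$ within $O(\diam(\tau^{(m)}))$ of $\gamma(a(\tau^{(m)}))$, which for $c_0$ small is negligible against $\beta_i(\tau)\diam(\tau)$, so it is enough to control how far the horizontal chord $L_{\gamma(a(\tau^{(m)})),\gamma(b(\tau^{(m)}))}$ drifts from $L_\tau$. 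Telescoping through the intermediate chords $L_{\gamma(a(\tau^{(j)})),\gamma(b(\tau^{(j)}))}$ and bounding each step by Proposition~\ref{p:curvature}/Corollary~\ref{c:curvature} (applied, after dyadic pigeonholing and the insertion of auxiliary points of $\Gamma$, to $\gamma(a(\tau^{(j)}))$, an interior point, the endpoints of $\tau^{(j+1)}$, and $\gamma(b(\tau^{(j)}))$) while the chords are far from parallel, and by the drift bound of Lemma~\ref{l:close-lines} once they are almost parallel, should give an estimate of the shape
\begin{align*}
  \beta_i(\tau)^{2i}\diam(\tau)\ \lesssim\ \sum_{j=0}^{m}2^{-cj}\,\Delta_{\tau,j},
\end{align*}
where $\Delta_{\tau,j}\ge0$ is the excess (relative to a chord) of a configuration of boundedly many points of $\gamma(I_\tau)$ of generation $k+j$ and $c>0$ is absolute; the geometric factor $2^{-cj}$ is forced by the scaling $d(\pi_i(\cdot),\pi_i(\cdot))^i\lesssim\eta\,\ell^i$ of Lemma~\ref{l:close-lines} together with the fixed ratio $2^{-100}$ between successive generations, after raising the telescoped inequality to the $2i$-th power and using H\"older.

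\emph{Step 3: sum up --- the main obstacle.} By Step~1 each $\Delta_{\tau,j}$ is at most a bounded sum of filtration excesses $\mathrm{exc}(\sigma)$ over arcs $\sigma$ of generation $k+j$ lying in $\tau^{(j)}$ or a neighbour, so summing the displayed bound over $\tau\in\cF$ and interchanging the order of summation controls $\sum_{\tau}\beta_i(\tau)^{2i}\diam(\tau)$ by $\sum_{\sigma\in\cF}\mathrm{exc}(\sigma)$ weighted by $\sum_{j\ge0}2^{-cj}\cdot(\#\text{ ancestors of }\sigma\text{ at generation }\mathrm{gen}(\sigma)-j\text{ that charge }\sigma)$. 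As there is at most one such ancestor per offset $j$, the weight on each $\mathrm{exc}(\sigma)$ is $\le\sum_{j\ge0}2^{-cj}=O(1)$, and the proposition would follow from $\sum_\sigma\mathrm{exc}(\sigma)\lesssim\Hd^1(\Gamma)$. I expect the hard part to be exactly this bookkeeping: choosing the auxiliary points in Step~2 so that the geometric decay $2^{-cj}$ truly survives the passage from $\Delta_{\tau,j}$ to honest filtration excesses, and so that each fixed arc is charged boundedly often per offset. This is the part that must be carried out carefully as in \cite{Schul-TSP,li-schul-1}, now with the stratified curvature estimates. (One last point: Lemma~\ref{l:curvature}, Proposition~\ref{p:curvature} and Corollary~\ref{c:curvature} are proved for a Hebisch--Sikora metric while balls here use $d_\infty$, but the two are biLipschitz equivalent, so this costs only constants.)
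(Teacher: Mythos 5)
Your proposal is correct in outline and follows essentially the same route as the paper: child-chord deviations between consecutive filtration generations are charged to triangle-inequality excess via Proposition~\ref{p:curvature} and Corollary~\ref{c:curvature} (with an auxiliary interior point), telescoped down the filtration to control $\beta_i(\tau)$, and summed with geometric decay in the scale offset against $\sum_\tau \mathrm{exc}(\tau)\lesssim \Hd^1(\Gamma)$; the paper merely organizes the summation differently, first proving $\sum_{\tau}d_\tau^{2i}/\diam(\tau)^{2i-1}\lesssim\Hd^1(\Gamma)$ for the maximal child-chord deviation (Lemma~\ref{l:dtau-bound}) and then using a maximal-deviation chain together with Minkowski's inequality in $\ell_{2i}$, the factor $2^{-cj}$ coming from the diameter ratio between generations rather than from your per-$\tau$ H\"older step. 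The one substantive detail you leave to ``bookkeeping'' that the paper settles explicitly is the separation hypothesis of the curvature estimates: if the endpoints of the depth-two subarcs of $\tau$ fail to be $\gtrsim\diam(\tau)$-separated, the excess $\Delta(\tau)$ is already comparable to $\diam(\tau)$ and the bound is trivial, so one may always assume the separation needed to invoke Proposition~\ref{p:curvature}/Corollary~\ref{c:curvature} directly (Lemma~\ref{l:close-lines} is not needed separately at this stage, since it is already built into the proof of Proposition~\ref{p:curvature}).
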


Assuming this proposition, one can now derive \eqref{e:nonflat}.
\begin{corollary}
  \begin{align*}
    \sum_{B \in \cG_1} \hat{\beta}_\Gamma(B)^{2s} \diam(B) \leq C \Hd^1(\Gamma).
  \end{align*}
\end{corollary}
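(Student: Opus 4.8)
The plan is to reduce the corollary to Proposition \ref{p:tb-bound} by the same bookkeeping that proves the analogous estimate in \cite{li-schul-1}; this is precisely inequality \eqref{e:nonflat}. For each $B \in \cG_1$, the definition of $\cG_1$ furnishes an arc $\tau_B \in \Lambda'(Q(B))$ with $\hat{\beta}(\tau_B) \geq \epsilon_0 \hat{\beta}_\Gamma(B)$, and if $B \in \cG^i$ then $\tau_B \in \cF^i$. Raising to the power $2s$ gives $\hat{\beta}_\Gamma(B)^{2s}\diam(B) \leq \epsilon_0^{-2s}\hat{\beta}(\tau_B)^{2s}\diam(B)$, so it suffices to control $\sum_{i=1}^{D'}\sum_{B \in \cG_1 \cap \cG^i}\hat{\beta}(\tau_B)^{2s}\diam(B)$.

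The point is that, within a fixed $\cG^i$, the map $B \mapsto \tau_B$ is boundedly-to-one with comparable diameters, with the relevant constants depending only on $G$. Indeed, \eqref{e:curve-diam-bound} gives $\diam(\tau_B) \leq 3\diam(B)$; conversely, $\tau_B$ comes from a connected component $\tau^0$ of $\gamma^{-1}(\Gamma \cap Q(B))$ whose image meets $B$, so (outside the harmless edge case $\Gamma \subset Q(B)$, where all relevant quantities are comparable to $\diam(\Gamma)=1$) this image runs from $\partial Q(B)$ into $B$ and back, forcing $\diam(\tau^0) \gtrsim r(B)$, while $\diam(\tau_B) \asymp \diam(\tau^0)$ since $\tau_B$ and $\tau^0$ occupy the same level of the (pre)filtration; hence $\diam(B) \asymp \diam(\tau_B)$. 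Consequently, for a fixed $\tau \in \cF^i$ every $B \in \cG^i$ with $\tau_B = \tau$ has radius comparable to $\diam(\tau)$ — hence one of boundedly many possible radii, by property (1) of the decomposition $\{\cB_i\}$ — and has $Q(B) \supseteq \gamma(\tau^0)$; since cubes of equal radius in $\Delta(\cB,i)$ are pairwise separated (property (3)) and Carnot groups are doubling, only boundedly many such $B$ exist. Therefore
\begin{align*}
  \sum_{B \in \cG_1}\hat{\beta}_\Gamma(B)^{2s}\diam(B)
  &\lesssim \sum_{i=1}^{D'}\sum_{\tau \in \cF^i}\hat{\beta}(\tau)^{2s}\diam(\tau) \\
  &= \sum_{i=1}^{D'}\sum_{j=1}^s\sum_{\tau \in \cF^i}\beta_j(\tau)^{2j}\diam(\tau) \\
  &\lesssim D' s\,\Hd^1(\Gamma),
\end{align*}
where the implicit constants depend only on $G$ (and on $\epsilon_0$, which itself depends only on $G$), the middle equality is the definition of $\hat\beta(\tau)$, and the last inequality is Proposition \ref{p:tb-bound} applied to each filtration $\cF^i$ and each $j \in \{1,\dots,s\}$. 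Since $D'$ depends only on $G$ by Lemma \ref{l:separate-scales} (with $J=100$), this is the asserted bound.

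The one step requiring care is the comparability $\diam(B) \asymp \diam(\tau_B)$ together with the resulting bounded multiplicity of witnessing arcs. This is purely organizational, however, and is exactly what the filtration construction imported from \cite{li-schul-1} was designed to provide — the scale of $Q(B)$ is matched to the scale of the arcs in $\Lambda'(Q(B))$ — so no new geometric estimate is needed here beyond Proposition \ref{p:tb-bound}.
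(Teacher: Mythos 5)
Your proof is correct and follows essentially the same route as the paper: pick a witnessing arc $\tau_B \in \Lambda'(Q(B))$ with $\hat{\beta}(\tau_B) \geq \epsilon_0 \hat{\beta}_\Gamma(B)$, split over the $D'$ families, and reduce to Proposition \ref{p:tb-bound} applied to each filtration. The only difference is that you spell out the diameter comparability $\diam(B) \asymp \diam(\tau_B)$ and the bounded multiplicity of $B \mapsto \tau_B$, which the paper leaves implicit (absorbed into its factor $4/\epsilon_0^{2s}$ and the passage from the sum over $B$ to the sum over $\tau \in \cF_i$).
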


\begin{proof}
  For each $B \in \cG_1$, let $\tau_B \in \Lambda'(B)$ denote a curve so that $\hat{\beta}(\tau_B) \geq \epsilon_0 \hat{\beta}_\Gamma(B)$.  Recall that $\cG$ was decomposed into $D'$ families $\{\cG^i\}_{i=1}^{D'}$, which thus also partitions $\cG_1$ into $\{\cG_1^i\}_{i=1}^{D'}$.  Let $\cF_i$ denote the filtration constructed using $\cG_1^i$.  By \eqref{e:curve-diam-bound}, $\diam(\tau_B) \geq \frac{1}{2}\diam(B)$.  As $D'$ is a constant depending only on $G$, we now get
  \begin{align*}
    \sum_{B \in \cG_1} \hat{\beta}_\Gamma(B)^{2s} \diam(B) &= \sum_{i=1}^{D'} \sum_{B \in \cG_1^i} \hat{\beta}_\Gamma(B)^{2s} \diam(B) \leq \frac{2}{\epsilon_0^{2s}} \sum_{i=1}^{D'} \sum_{B \in \cG_1^i} \hat{\beta}(\tau_B)^{2s} \diam(\tau_B) \\
    &\leq \frac{2}{\epsilon_0^{2s}} \sum_{i=1}^{D'} \sum_{\tau \in \cF_i} \hat{\beta}(\tau)^{2s} \diam(\tau) = \frac{2}{\epsilon_0^{2s}} \sum_{i=1}^{D'} \sum_{\tau \in \cF_i} \sum_{j=1}^s \beta_j(\tau)^{2j} \diam(\tau) \leq C\Hd^1(\Gamma).
  \end{align*}
\end{proof}

We now start working on Proposition \ref{p:tb-bound}.  We will now view $i \in \{1,...,s\}$ and the filtration $\cF$ as fixed.  For simplicity, we will assume for this filtration $\cF$ that $L=1$ in property (1) of filtrations.  Indeed, one could just first scale everything by $L^{-1}$, prove the results of this subsection, and then rescale back.

We define for $\tau \in \cF_n$ and $k \in \N$ the families
\begin{align*}
  \cF_{\tau,k} := \{\tau' \in \cF_{n+k} : \tau' \subset \tau\}.
\end{align*}
Given $\tau \in \cF$, we further define
\begin{align*}
  d_\tau := \max_{\tau' \in \cF_{\tau,1}} \sup_{z \in \pi_i(L_{\tau'})} d(z,\pi_i(L_\tau)).
\end{align*}

\begin{lemma} \label{l:dtau-bound}
  \begin{align}
    \sum_{\tau \in \cF} \frac{d_\tau^{2i}}{\diam(\tau)^{2i-1}} \leq C\Hd^1(\Gamma). \label{e:dtau-bound}
  \end{align}
\end{lemma}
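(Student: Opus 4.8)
The plan is to establish the pointwise estimate
\[
  \frac{d_\tau^{2i}}{\diam(\tau)^{2i-1}} \leq C\,\Delta(\tau) \qquad \text{for every } \tau \in \cF,
\]
and then to deduce \eqref{e:dtau-bound} from it by summing over $\tau \in \cF$ and showing $\sum_{\tau \in \cF}\Delta(\tau) \leq C\Hd^1(\Gamma)$.

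\emph{The summation.} Write $S_m := \sum_{\tau \in \cF_m} d_{HS}(\overline{\gamma(a(\tau))},\overline{\gamma(b(\tau))})$. Inside $I_\tau$ the arcs of $\cF_{\tau,3}$ chain consecutively from $a(\tau)$ to $b(\tau)$, so repeated triangle inequalities give $\Delta(\tau) \geq 0$. Since $\bigsqcup_{\tau \in \cF_n} \cF_{\tau,3} = \cF_{n+3}$, summing the definition of $\Delta(\tau)$ over $\tau \in \cF_n$ yields $\sum_{\tau \in \cF_n}\Delta(\tau) = S_{n+3} - S_n$, so the partial sums telescope,
\[
  \sum_{n=1}^{N}\sum_{\tau \in \cF_n}\Delta(\tau) = S_{N+1} + S_{N+2} + S_{N+3} - S_1 - S_2 - S_3.
\]
Because $d_{HS} \leq C d_\infty$, $\pi_i$ is $1$-Lipschitz and $\gamma$ is $1$-Lipschitz, we have $S_m \leq C\sum_{\tau \in \cF_m}\diam(\tau) \leq C'\Hd^1(\Gamma)$ for every $m$ (the arcs of a fixed generation of a filtration have uniformly bounded overlap; cf.\ \cite{li-schul-1}). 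Since $\Delta(\tau) \geq 0$, letting $N \to \infty$ gives $\sum_{\tau \in \cF}\Delta(\tau) \leq 3C'\Hd^1(\Gamma)$.

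\emph{The pointwise estimate.} Fix $\tau \in \cF_n$, set $\ell \asymp \diam(\tau)$, and let $\zeta \in \cF_{\tau,1}$ be a child realizing the maximum in the definition of $d_\tau$. All quantities in $d_\tau$ and $\Delta(\tau)$ live in the step-$i$ Carnot group $\pi_i(G)$, and $\pi_i$ commutes with the formation of horizontal segments (as $\overline{g,h}$ depends only on $g$ and the first-layer coordinates of $g^{-1}h$), so $\pi_i(L_\zeta)$ and $\pi_i(L_\tau)$ are exactly the horizontal segments joining $\pi_i\gamma(a(\zeta))$ to $\pi_i\gamma(b(\zeta))$, resp.\ $\pi_i\gamma(a(\tau))$ to $\pi_i\gamma(b(\tau))$, inside $\pi_i(G)$. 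Choose parameters $a(\tau) = t_1 < t_2 < t_3 = a(\zeta) < t_4 = b(\zeta) < t_5 = b(\tau)$ in $I_\tau$, where $t_2$ is taken to be an endpoint of some arc of $\cF_{n+3}$ (the other $t_k$ already are). Applying Corollary \ref{c:curvature} in $\pi_i(G)$ to $p_k := \pi_i\gamma(t_k)$ and retaining only the $j = i$ summand of its conclusion — for which the outer projection is the identity on $\pi_i(G)$ — we get
\[
  \frac{d_\tau^{2i}}{\ell^{2i-1}} = \frac{1}{\ell^{2i-1}}\Big(\sup_{z \in \pi_i(L_\zeta)} d_{HS}(z, \pi_i(L_\tau))\Big)^{2i} \leq C\left( \sum_{k=1}^{4} d_{HS}(p_k, p_{k+1}) - d_{HS}(p_1, p_5) \right).
\]
Since each $t_k$ is an endpoint of an arc of $\cF_{n+3}$ contained in $I_\tau$, chaining the triangle inequality through these arcs bounds $\sum_{k=1}^{4} d_{HS}(p_k, p_{k+1})$ by $\sum_{\tau' \in \cF_{\tau,3}} d_{HS}(\overline{\gamma(a(\tau'))},\overline{\gamma(b(\tau'))})$, while $d_{HS}(p_1, p_5) = d_{HS}(\overline{\gamma(a(\tau))},\overline{\gamma(b(\tau))})$; hence the right-hand side is $\leq C\Delta(\tau)$. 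As $\ell \asymp \diam(\tau)$, this is the desired pointwise bound.

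\emph{The main obstacle.} The previous step invoked the separation hypothesis $\lambda\ell \leq d_{HS}(p_j, p_k)$ of Corollary \ref{c:curvature} for the five projected points, and this may fail: the extremal child $\zeta$ may be so short, or so collapsed by $\pi_i$, that $d_{HS}(\overline{\gamma(a(\zeta))},\overline{\gamma(b(\zeta))}) < \lambda\ell$; it may abut $a(\tau)$ or $b(\tau)$, leaving no well-separated buffer $t_2$; or $\pi_i$ may merge otherwise-distant curve points. Each degenerate configuration is treated by a direct estimate: when $\pi_i(L_\zeta)$ is itself short, $d_\tau$ is controlled by the drift of its endpoint $\pi_i\gamma(a(\zeta))$ from $\pi_i(L_\tau)$, which is again bounded by a curvature inequality applied to a non-degenerate sub-tuple; when $\zeta$ abuts an endpoint of $\tau$, one uses the four-point Proposition \ref{p:curvature} (or Lemma \ref{l:curvature}) with the buffer point placed inside $\zeta$, so that $L_\zeta$ itself appears among the segments bounded there. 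This bookkeeping — essentially the same as in Sections~2--3 of \cite{li-schul-1}, but now driven by the stratified curvature of Corollary \ref{c:curvature} rather than a Euclidean or Heisenberg analogue — is the main technical work; the telescoping and the triangle-inequality chaining are routine.
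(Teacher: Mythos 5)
Your skeleton is the same as the paper's: reduce to the pointwise estimate $d_\tau^{2i}/\diam(\tau)^{2i-1}\le C\,\Delta(\tau)$, prove it by feeding the endpoints of $\tau$, the endpoints of a child, and a buffer endpoint of a deeper arc into Corollary \ref{c:curvature} (with Proposition \ref{p:curvature} when the child shares an endpoint with $\tau$), and then telescope $\sum_{\tau\in\cF}\Delta(\tau)\lesssim \Hd^1(\Gamma)$. The telescoping paragraph is fine. The genuine gap is precisely the step you defer to ``bookkeeping'': the verification (or circumvention) of the separation hypothesis $\lambda\ell\le d_{HS}(p_j,p_k)$. Your proposed direct estimate for a degenerate child does not close: if the endpoints of the extremal child $\zeta$ are within $\lambda\,\diam(\tau)$ of each other, then bounding $\sup_{z\in\overline{L_\zeta}}d_{HS}(z,\overline{L_\tau})$ by the drift of one endpoint plus the length of $\overline{L_\zeta}$ leaves an error of size up to $\lambda\,\diam(\tau)$ with $\lambda$ a \emph{fixed} constant, and this cannot be absorbed into $C\Delta(\tau)^{1/2i}\diam(\tau)^{1-1/(2i)}$, because in that branch you have no lower bound on $\Delta(\tau)$.

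The missing idea, which is the actual content of the paper's argument, is a dichotomy at the level of the second-generation arcs $\cF_{\tau,2}$. Either all images of endpoints of arcs of $\cF_{\tau,2}$ are $2^{-1000}$-separated at scale $\diam(\tau)$, in which case every tuple needed (for \emph{every} child, not just the extremal one) satisfies the hypotheses of Corollary \ref{c:curvature}/Proposition \ref{p:curvature} with a universal $\lambda$; or two such endpoints $u,v$ have close images, and then, since the arc of $\cF_{\tau,2}$ between them still has diameter $\gtrsim 2^{-200}\diam(\tau)$ (a bound the filtration guarantees), it contains an endpoint $z$ of an arc of $\cF_{\tau,3}$ far from both, so $d(\gamma(u),\gamma(z))+d(\gamma(z),\gamma(v))-d(\gamma(u),\gamma(v))\gtrsim\diam(\tau)$, hence $\Delta(\tau)\gtrsim\diam(\tau)$ and the trivial bound $d_\tau^{2i}/\diam(\tau)^{2i-1}\lesssim\diam(\tau)$ already gives the pointwise estimate. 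Without this mechanism (or an equivalent one) your degenerate cases remain unproved. A secondary point: the paper applies Corollary \ref{c:curvature} to the \emph{unprojected} points of $G$ — the projections $\pi_i$ already appear in its conclusion, and the resulting unprojected excess still telescopes against $\Hd^1(\Gamma)$ — so separation only has to be arranged in $G$. Your plan of applying the corollary inside $\pi_i(G)$ to projected points creates the additional ``collapsed by $\pi_i$'' degeneracy you mention, and that one is genuinely harder to dispose of, since the filtration's diameter lower bounds (needed for the dichotomy above) hold in $G$ and not in $\pi_i(G)$.
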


\begin{proof}
  Although the quantities $d_\tau$, $\diam$, and $\Hd^1$ are all calculated with respect to $d = d_\infty$, it suffices to prove \eqref{e:dtau-bound} with $d = d_{HS}$ instead.  Indeed, a biLipschitz change of metric affects all quantities by only a multiplicative constant.  Thus, for the rest of this proof, we will assume $\diam$, $d_\tau$, and $\Hd^1$ are with respect to $d_{HS}$.

  As $d_{HS}$ and $d$ are biLipschitz, property (1) of filtrations gives that there exists a $C_0 > 1$ so that
  \begin{align}
    \frac{1}{C_0} 2^{-100k} \leq \diam(\tau) \leq C_0 2^{-100k}, \qquad \forall \tau \in \cF_k. \label{e:HS-diams}
  \end{align}
  Define $M = \lceil 3 + \log_2 C_0 \rceil$.  We have chosen $M$ so that for all $k$
  \begin{align}
    C_02^{-100(k+M)} \leq C_0^{-1} 2^{-202-100k}. \label{e:C0-defn}
  \end{align}
  For $\ell \in \{2,...,s\}$, define
  \begin{align*}
    \Delta_\ell(\tau) := \left( \sum_{\tau' \in \cF_{\tau,M}} d_{HS}(\pi_\ell(\gamma(a(\tau'))),\pi_\ell(\gamma(b(\tau')))) \right) - d_{HS}(\pi_\ell(\gamma(a(\tau))),\pi_\ell(\gamma(b(\tau)))).
  \end{align*}
  It suffices to prove that
  \begin{align}
    \frac{d_\tau^{2i}}{\diam(\tau)^{2i-1}} \leq C\sum_{\ell=2}^s \Delta_\ell(\tau). \label{e:delta-bound}
  \end{align}
  Indeed, summing both sides over $\tau \in \cF$ gives
  \begin{align*}
    \sum_{\tau \in \cF} \frac{d_\tau^{2i}}{\diam(\tau)^{2i-1}} \leq C \sum_{\ell=2}^s \sum_{\tau \in \cF} \Delta_\ell(\tau) \leq sCM \Hd^1(\Gamma)
  \end{align*}
  where we used the fact that $\sum_\tau \Delta_\ell(\tau) \leq M\Hd^1(\Gamma)$ for each $\ell$ as $\pi_\ell$ is 1-Lipschitz and the summation of differences telescopes.

  Fix a $\tau \in \cF_k$ and let $\{\tau_j\}_{j=1}^m$ denote the subarcs of $\cF_{\tau,1}$ in order of the flow along $\bT$ so that $a(\tau_1) = a(\tau)$ and $b(\tau_m) = b(\tau)$. Let $P$ denote the images under $\gamma$ of all the endpoints of subarcs of $\cF_{\tau,2}$.  We will want to apply the results of the last subsection to points from $P$.  It may seem strange that we are taking the endpoints of $\cF_{\tau,2}$ instead of endpoints of $\cF_{\tau,1}$, but this is necessary as we will indicate below.

  Let us first assume $P$ is a $C_0^{-1}2^{-1000-100k}$-separated set (in $d_{HS}$).  Note that we also have
  \begin{align}
    d_{HS}(p,q) \leq \diam(\tau), \qquad \forall p,q \in P. \label{e:P-upperbound}
  \end{align}
  We will bound
  $$\sup_{z \in \pi_i(L_{\tau_j})} \frac{d_{HS}(z,\pi_i(L_\tau))^{2i}}{\diam(\tau)^{2i-1}}$$
  for different $j$s and take the maximal of these bounds to get our needed result.  There will be three cases, $j = 1$, $j = m$, and the rest.

  First we consider when $j \in \{2,...,m-1\}$.  Pick $u$, any endpoint of a subarc of $\cF_{\tau_1,1}$ that is not also an endpoint of $\tau_1$.  It then follows that $u$ lies in $P$.  Note that it is here that we used the fact that $P$ contains endpoints of $\cF_{\tau,2}$.  We apply Corollary \ref{c:curvature} to the collection of points $\gamma(a(\tau)), \gamma(u), \gamma(a(\tau_j)), \gamma(b(\tau_j)),\gamma(b(\tau)) \in P$ with $\ell = \diam(\tau) \leq C_0 2^{-100k}$ and $\lambda = C_0^{-2} 2^{-1000}$ to get that
  \begin{align}
    \sup_{z \in \pi_i(L_{\tau_j})} \frac{d_{HS}(z,\pi_i(L_\tau))^{2i}}{\diam(\tau)^{2i-1}} \leq C\sum_{\ell=2}^s \Delta_\ell(\tau) \label{e:middle}
  \end{align}
  where $u$ plays the role of $p_2$ in Corollary \ref{c:curvature}.

  For $j = 1$, we apply Proposition \ref{p:curvature} to the points $\gamma(a(\tau)), \gamma(u), \gamma(b(\tau_1)), \gamma(b(\tau))$ using the same $u$, $\ell$, $\lambda$ as before to get that
  \begin{align}
    \sup_{z \in \pi_i(L_{\tau_1})} \frac{d_{HS}(z,\pi_i(L_\tau))^{2i}}{\diam(\tau)^{2i-1}} \leq C\sum_{\ell=2}^s \Delta_\ell(\tau). \label{e:first}
  \end{align}
  
  For $j = m$, we apply Proposition \ref{p:curvature} to the points $\gamma(a(\tau)), \gamma(a(\tau_{m-1})),\gamma(a(\tau_m)), \gamma(b(\tau_m))$ to get that
  \begin{align}
    \sup_{z \in \pi_i(L_{\tau_m})} \frac{d_{HS}(z,\pi_i(L_\tau))^{2i}}{\diam(\tau)^{2i-1}} \leq C\sum_{\ell=2}^s \Delta_\ell(\tau). \label{e:last}
  \end{align}
  The inequalities \eqref{e:middle}, \eqref{e:first}, \eqref{e:last} give \eqref{e:delta-bound} in our current case where $P$ is $C_0^{-1}2^{-1000-100k}$-separated.

  Thus, we now assume that $P$ is not $C_0^{-1}2^{-1000-100k}$-separated, and let $u,v$ be the endpoint of two subarcs of $\cF_{\tau,2}$ (where $u < v$ based on the flow of $\bT$) so that $d_{HS}(\gamma(u),\gamma(v)) < C_0^{-1}2^{-1000-100k}$.  Let $\xi$ be any subarc of $\cF_{\tau,2} \subset \cF_{k+2}$ that lies between $u$ and $v$.  As $\diam(\xi) \geq C_0^{-1} 2^{-100(k+2)}$, there then exists some $w \in \xi$ so that
  \begin{align*}
    d_{HS}(\gamma(w),\{\gamma(u),\gamma(v)\}) \geq \diam(\xi) - d_{HS}(\gamma(u),\gamma(v)) \geq C_0^{-1} 2^{-201-100k}.
  \end{align*}
  This $w$ must lie in some subarc $\zeta \in \cF_{\xi,M-2} \subset \cF_{\tau,M}$ and as $\diam(\zeta) \leq C_02^{-100(k+M)}$, we get that if $z$ is an endpoint of $\zeta$, then
  \begin{multline*}
    d_{HS}(\gamma(z), \{\gamma(u),\gamma(v)\}) \geq d_{HS}(\gamma(w), \{\gamma(u),\gamma(v)\}) - \diam(\zeta) \\
    \geq C_0^{-1}2^{-201-100k} - C_02^{-100(k+M)} \overset{\eqref{e:C0-defn}}{\geq} C_0^{-1}2^{-202-100k}.
  \end{multline*}
  Thus,
  $$d_{HS}(\gamma(u),\gamma(z)) + d_{HS}(\gamma(z),\gamma(v)) - d_{HS}(\gamma(u),\gamma(v)) > C_0^{-1}2^{-203-100k}.$$
  As $z$ is an endpoint of a subarc of $\cF_{\tau,M}$ and lies between $u,v$ which are endpoints of subarcs of $\cF_{\tau,2}$, we get that by a simple triangle inequality argument that
  \begin{align*}
    C_0^{-1}2^{-203-100k} \leq d_{HS}(\gamma(u),\gamma(z)) + d_{HS}(\gamma(z),\gamma(v)) - d_{HS}(\gamma(u),\gamma(v)) \leq \Delta_s(\tau).
  \end{align*}
  Finally, Lemma \ref{l:lines-bounds} and \eqref{e:P-upperbound} gives us that $\frac{d_\tau^{2i}}{\diam(\tau)^{2i-1}} \leq 2^{2i}\diam(\tau) \leq C_02^{2s-100k}$ which, with the previous inequality, gives our result.
\end{proof}

Given $\tau \in \cF$, we define $\{\tau_j\}_{j = 0}^\infty$ inductively as $\tau_0 = \tau$ and $\tau_j \in \cF_{\tau,j}$ so that $d_{\tau_j}$ is maximal for all arcs in $\cF_{\tau,j}$.  We now show that the $d_{\tau_k}$ controls the $\beta_i(\tau)$.  The proof of the following lemma is exactly like Lemma 3.6 of \cite{li-schul-1}.
\begin{lemma}[Lemma 3.6 of \cite{li-schul-1}]
  Let $\tau \in \cF$.  Then
  \begin{align}
    \beta_i(\tau) \diam(\tau) \leq \sum_{k=0}^\infty d_{\tau_k}. \label{e:tb-dk}
  \end{align}
\end{lemma}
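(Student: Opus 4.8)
The plan is to carry out the telescoping argument of Lemma~3.6 of \cite{li-schul-1} essentially verbatim. The key structural fact is that each projection $\pi_i$ is $1$-Lipschitz from $(G,d)$ to $(\pi_i(G),d)$: since $N(\pi_i(g)) \le N(g)$ we get $d(\pi_i(g),\pi_i(h)) = N(\pi_i(g^{-1}h)) \le N(g^{-1}h) = d(g,h)$ (throughout this subsection we take $d = d_{HS}$, consistent with the definitions of $d_\tau$ and $\Delta(\tau)$; any other homogeneous metric only changes the constants). Fix $\tau \in \cF_n$. Because $t \mapsto d(\pi_i(\gamma(t)),\overline{L_\tau})$ is continuous on $I_\tau$ and the set of $t \in I_\tau$ that are not an endpoint of any arc of $\cF$ is dense (its complement is countable), it suffices to bound $d(\pi_i(\gamma(t)),\overline{L_\tau})$ by $\sum_{k\ge0}\overline{d_{\tau_k}}$ for each such ``generic'' $t$; the right-hand side is independent of $t$, so taking the supremum over $t \in I_\tau$ and dividing by $\diam(\tau)$ then yields \eqref{e:tb-dk}.

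Fix such a $t$. Using the filtration properties (each level covers $\bT$, each arc has a unique parent, distinct arcs of a level meet only at endpoints), for a generic $t$ there is a canonical nested chain $\tau = \sigma_0 \supset \sigma_1 \supset \sigma_2 \supset \cdots$ with $\sigma_k \in \cF_{\tau,k}$, $\sigma_{k+1}$ the child of $\sigma_k$ whose interior contains $t$, and $t \in I_{\sigma_k}$ for all $k$. Iterating the triangle inequality through nearest points in the compact sets $\overline{L_{\sigma_0}},\dots,\overline{L_{\sigma_k}}$ (these are continuous images of line segments, hence compact, so the infima defining the distances are attained) gives
\begin{align*}
  d(\pi_i(\gamma(t)),\overline{L_\tau}) \le d(\pi_i(\gamma(t)),\overline{L_{\sigma_k}}) + \sum_{j=0}^{k-1} \sup_{z \in \overline{L_{\sigma_{j+1}}}} d(z,\overline{L_{\sigma_j}}).
\end{align*}
Since $\sigma_{j+1} \in \cF_{\sigma_j,1}$, the $j$-th summand is at most $d_{\sigma_j}$, and since $\sigma_j \in \cF_{\tau,j}$, the defining maximality of $\tau_j$ gives $d_{\sigma_j} \le \overline{d_{\tau_j}}$. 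Hence the sum is at most $\sum_{j=0}^{k-1}\overline{d_{\tau_j}}$.

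It remains to send $k \to \infty$. By property~(2) of filtrations $\diam(\sigma_k) \le 2^{-100(n+k)+14} \to 0$, and since $\gamma(a(\sigma_k)) \in L_{\sigma_k}$ while $t,a(\sigma_k) \in I_{\sigma_k}$, the Lipschitz bound on $\pi_i$ gives $d(\pi_i(\gamma(t)),\overline{L_{\sigma_k}}) \le d(\gamma(t),\gamma(a(\sigma_k))) \le \diam(\sigma_k) \to 0$. Thus $d(\pi_i(\gamma(t)),\overline{L_\tau}) \le \sum_{j\ge0}\overline{d_{\tau_j}}$, which is what we wanted. The only point requiring genuine care is the existence of the canonical nested chain $\{\sigma_k\}$ through the prescribed point $t$ together with the identification of $\sigma_{j+1}$ as a child of $\sigma_j$; this is precisely the bookkeeping carried out in the proof of Lemma~3.6 of \cite{li-schul-1}, and everything else is a routine iterated triangle inequality.
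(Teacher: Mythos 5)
Your argument is correct and is essentially the paper's proof: the paper simply defers to Lemma~3.6 of \cite{li-schul-1}, and you have carried out exactly that telescoping argument (nested chain through a generic $t$, iterated triangle inequality bounded by $d_{\sigma_j}\le \overline{d_{\tau_j}}$, and $\diam(\sigma_k)\to 0$ to kill the remainder). Your remark about fixing one homogeneous metric throughout is a sensible reading of the paper's notation and does not change the substance.
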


\begin{proof}[Proof of Proposition \ref{p:tb-bound}]
  We compute in $\ell_{2i}$-fashion:
  \begin{align*}
    \left( \sum_{\tau \in \cF} \beta_i(\tau)^{2i} \diam(\tau) \right)^{1/2i} &\overset{\eqref{e:tb-dk}}{\leq} \left( \sum_{\tau \in \cF} \frac{\left( \sum_{k=0}^\infty d_{\tau_k} \right)^{2i}}{\diam(\tau)^{2i-1}} \right)^{1/2i} \\
    &\leq \sum_{k=0}^\infty \left( \sum_{\tau \in \cF} \frac{d_{\tau_k}^{2i}}{\diam(\tau)^{2i-1}} \right)^{1/2i} \\
    &\leq \sum_{k=0}^\infty 2^{-J(k-1)/2} \left( \sum_{\tau \in \cF} \frac{d_{\tau_k}^{2i}}{2^{-J(k-1)(2i-1)}\diam(\tau)^{2i-1}} \right)^{1/2i} \\
    &\leq \sum_{k=0}^\infty 2^{-J(k-1)/2} \left( \sum_{\tau \in \cF} \frac{d_{\tau_k}^{2i}}{\diam(\tau_k)^{2i-1}} \right)^{1/2i} \\
    &\overset{\eqref{e:dtau-bound}}{\leq} \sum_{k=0}^\infty 2^{-J(k-1)/2} C\Hd^1(\Gamma)^{1/2i} \\
    &\leq C\Hd^1(\Gamma)^{1/2i}.
  \end{align*}
\end{proof}

\subsection{Flat balls} \label{s:flat}
We are back to using just the $d_\infty$ metric. One can compare the following lemma with Lemma 4.3 of \cite{li-schul-1}.  The crucial difference is that we are setting $h = \beta_1(\tau')\diam(\tau')$ rather than $\beta_s(\tau')\diam(\tau')$.

\begin{lemma} \label{l:long-line}
  There exists $\eta_0 \in (0,1)$ so that the following holds.  Let $B \in \cG_2$ be a ball of radius $r$ and $Q = Q(B)$ for which $2B \subset Q \subset 2(1+2^{-98})B$.  If $\tau' \in \Lambda'(B)$ is such that $\Center(B) \in \tau'$, $\beta_s(\tau') < \eta_0$, and
  \begin{align*}
    h := \beta_1(\tau')\diam(\tau') < \frac{1}{10}r,
  \end{align*}
  then there is a subarc $\tilde{\tau} \subset \tau'$ with image in $2B$ such that $\diam(\tilde{\tau}) \geq 4r-20h$.
\end{lemma}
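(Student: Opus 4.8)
The plan is to take $\tilde\tau$ to be the sub-arc of $\tau'$ through the parameter of $\Center(B)$ that stays inside $2B$, and to show that its two endpoints, which necessarily lie on $\partial(2B)$, sit essentially antipodally about $\Center(B)$, so that even their first-layer distance is at least $4r-20h$.

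\emph{Setup.} Translate so that $\Center(B)=0$ and fix $t_0\in I_{\tau'}$ with $\gamma(t_0)=0$. Since $\tau\subseteq\tau'$ is a connected component of $\gamma^{-1}(\Gamma\cap Q)$, the points $\gamma(a(\tau)),\gamma(b(\tau))$ lie outside $2B\subseteq Q$ (the case $\tau=\bT$, which forces $\Gamma\subseteq 3B$ and hence $r\asymp 1$, is handled separately); combining this with $\diam(\tau')\le 3\diam(B)$ from \eqref{e:curve-diam-bound} and with the fact that each component of $\tau'\setminus\tau$ has diameter at most $2^{-10}\diam(\tau')$, one checks $a(\tau)<t_0<b(\tau)$. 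Let $[s_1,s_2]\ni t_0$ be the connected component of $\{t\in I_{\tau'}:\gamma(t)\in 2B\}$ containing $t_0$; then $a(\tau)<s_1<t_0<s_2<b(\tau)$, the image of $\tilde\tau:=\gamma|_{[s_1,s_2]}$ lies in $2B$, and $d(0,\gamma(s_1))=d(0,\gamma(s_2))=2r$. It remains to bound $\diam(\tilde\tau)\ge d(\gamma(s_1),\gamma(s_2))$ from below.

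\emph{Confining the arc to a tube.} Because $\pi_i$ is $1$-Lipschitz and $N(\pi_i(g))\le N(g)$, we have $\beta_i(\tau')\le\beta_s(\tau')<\eta_0$ for all $i$, so $\gamma|_{\tau'}$ lies in a thin anisotropic tube about the horizontal segment $L_{\tau'}$: within $\beta_i(\tau')\diam(\tau')$ of $\pi_i(L_{\tau'})$ in layer $i$, and within $h$ of $\pi_1(L_{\tau'})$ in the horizontal layer. Applying Lemma \ref{l:nonhorizontal-bound} to $\Center(B)$ and an arbitrary point of $\gamma(\tau')$ with $L=L_{\tau'}$, and using $d(\,\cdot\,,L_{\tau'})\le\beta_s(\tau')\diam(\tau')<\eta_0\diam(\tau')$, bounds the non-horizontal part $NH$ of every point of $\gamma(\tau')$; feeding this, layer by layer, into the coordinate estimate in the proof of Lemma \ref{l:beta-balls} — here it is essential that the $i$-th coordinate of the deviation of $\gamma(t)$ from $L_{\tau'}$ is of size only $(\beta_i(\tau')\diam(\tau'))^i$, which is precisely the gain the stratified $\hat\beta$ provides over a single $\beta$ — shows that for every $p\in\gamma(\tau')$ with $d(0,p)\ge r$ the distance $d(0,p)$ is realized in the first layer, i.e. $|\pi_1(p)|=d(0,p)$. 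In particular $|\pi_1(\gamma(s_j))|=2r$ for $j=1,2$, and the Euclidean segment $\ell_1:=\pi_1(L_{\tau'})$ passes within $h$ of $\pi_1(\Center(B))=0$ at a point lying at distance at least $2r-O(h)$ from each endpoint of $\ell_1$.

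\emph{Finishing in the first layer; the main obstacle.} Let $e_1$ be the unit direction of $\ell_1$ and put $g(t):=\langle\pi_1(\gamma(t)),e_1\rangle$, a continuous function on $I_{\tau'}$ with $g(t_0)=0$. Since $\pi_1(\gamma(t))$ stays within $h$ of $\ell_1$, the previous paragraph gives $|g(s_j)|\ge\sqrt{(2r)^2-(2h)^2}\ge 2r-h/5$, and $g(a(\tau'))$, $g(b(\tau'))$ are of opposite signs with absolute values at least $2r-O(h)$. The one nontrivial point is that $g(s_1)$ and $g(s_2)$ also have opposite signs — i.e.\ $\tilde\tau$ does not exit $2B$ twice on the same end of $\ell_1$ — and this is the delicate step; I would prove it from the connected-component structure: if, say, $g(s_2)<0$, then on $[t_0,b(\tau)]$ the continuous function $g$ must return through $0$, and the tube estimate of the previous paragraph forces the curve to re-enter $2B$ strictly after $s_2$, contradicting that $[s_1,s_2]$ is the $2B$-component of $t_0$; a careful argument has to preclude a finite cascade of such excursions, each of which is forced to traverse $2B$ and return. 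Granted that the signs are opposite,
\[
  \diam(\tilde\tau)\ \ge\ d(\gamma(s_1),\gamma(s_2))\ \ge\ |\pi_1(\gamma(s_1))-\pi_1(\gamma(s_2))|\ \ge\ |g(s_1)|+|g(s_2)|-2h\ \ge\ 4r-\frac{12}{5}h\ \ge\ 4r-20h,
\]
after choosing $\eta_0$ small enough that the lower-order $O(h^2/r)$ and $O(\eta_0^{1/s}\diam(\tau'))$ errors left implicit above are absorbed. The crux is therefore (i) the no-doubling-back argument and (ii) keeping the non-horizontal error terms genuinely controlled, both of which rely on the anisotropy of $\hat\beta$.
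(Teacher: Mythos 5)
Your reduction to the first layer (confining $\gamma(\tau')$ to a thin tube around $L_{\tau'}$, observing that for tube points at $d_\infty$-distance $\geq r$ from $\Center(B)$ the distance is realized by the horizontal coordinate, and then working with $g(t)=\langle\pi_1(\gamma(t)),e_1\rangle$) is in the same spirit as the paper's proof, which uses the product structure of the $N_\infty$-ball to get $\pi_1(\tau'\cap 2B)=\pi_1(\tau')\cap B_{\R^{n_1}}(2)$ and then finishes with a Euclidean exercise. The gap is exactly at the step you flag as delicate, and it cannot be repaired for your choice of $\tilde\tau$: if you insist that $\tilde\tau$ be the connected component of $\{t: \gamma(t)\in 2B\}$ containing the parameter $t_0$ of the center, then the opposite-sign claim for $g(s_1),g(s_2)$ is simply false in general. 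Nothing in the hypotheses prevents the curve from first crossing $2B$ from left to right (it must pass through the deep interior of $2B$, but need not hit $0$ exactly), exiting on the right, re-entering, touching $0$ at $t_0$, and exiting on the right again; all $\beta_i(\tau')$ can remain tiny in such a configuration. In that case your $[s_1,s_2]$ has both endpoints near the same end of $\ell_1$ and diameter about $2r$, not $4r-20h$. Moreover, your proposed contradiction is not one: the curve re-entering $2B$ after $s_2$ merely produces another component of $\{t:\gamma(t)\in 2B\}$ and is perfectly compatible with $[s_1,s_2]$ being the component of $t_0$.

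The fix is to drop the requirement that $\tilde\tau$ contain $t_0$ (the lemma does not ask for it, and its later use in the covering proposition does not need it). The role of $\Center(B)\in\tau'$ is only to guarantee that $\ell_1$ passes within $h$ of $0$, so that the chord of $B_{\R^{n_1}}(2)$ near $\ell_1$ has length $\geq 4r-O(h)$. With that, select a crossing excursion by the intermediate value theorem: since $g$ takes values $\approx -2r$ and $\approx +2r$ at the endpoints of $\tau'$, let $u_2$ be the first time $g$ reaches $2r-h$ and $u_1$ the last time before $u_2$ that $g$ equals $-(2r-h)$ (or the symmetric choice); on $[u_1,u_2]$ one has $|g|\leq 2r-h$, which together with the perpendicular bound $\leq 2h$ and the smallness of the higher-layer coordinates keeps the image in $2B$, while $g(u_2)-g(u_1)=4r-2h$ gives the diameter bound. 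This is precisely the ``maximal subcurve'' Euclidean argument the paper leaves to the reader; the rest of your estimates (the $\sqrt{(2r)^2-(2h)^2}$ bound and the error bookkeeping) then go through.
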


\begin{proof}
  We translate the setting so that $\Center(B) = 0$ and $r = 1$.  Let $k = \beta_s(\tau')\diam(\tau')$, $L = L_{\tau'}$ and $C(L,k) = \{ p \in G : d(p,L) \leq k\}$.  Then $\tau' \subset C(L,k)$. 
  As $0 \in \tau' \subset C(L,k)$, one gets that $d(0,L) \leq k$.  Recall that horizontal lines going through 0 are lines of $\R^{n_1} \times \{0\}$ that go through the origin.  It thus follows from continuity of group multiplication that if $\eta_0$ (and so $k$ also) is sufficiently small, then $L$ is close in Hausdorff distance to a line of $\R^{n_1} \times \{0\}$ and so
  \begin{align}
    C(L,k) \cap 2B = C(L,k) \cap (B_{\R^{n_1}}(2) \times \R^{n_2 + ... + n_s}). \label{e:product-tube}
  \end{align}
  Here, we have used the fact that $B = \{x : N_\infty(x) \leq 1\}$ has a product structure.  This is where we have used the fact that our metric is $d_\infty$.  We will not use it anywhere else.
  
  As $\tau' \subset C(L,k)$, we get from \eqref{e:product-tube} that
  \begin{align*}
    \pi_1(\tau' \cap 2B) = \pi_1(\tau' \cap (B_{\R^{n_1}}(2) \times \R^{n_2 + ... + n_s})) = \pi_1(\tau') \cap B_{\R^{n_1}}(2).
  \end{align*}
  Let $\xi' = \pi_1(\tau')$, which is now a curve in $\pi_1(G)$.  As $\pi_1 : G \to \pi_1(G)$ is 1-Lipschitz, it suffices then to find a subcurve $\tilde{\xi} \subset \xi'$ contained in $B_{\pi_1(G)}(2)$ so that $\diam(\tilde{\xi}) \geq 4 - 20h$.
  
  Recall that the metric $d_{HS}$ on $\pi_1(G) \cong \R^{n_1}$ is equivalent to the Euclidean metric $|\cdot|$.  Thus, we have now reduced to the following problem in $\R^{n_1}$: We have a cylinder $\pi_1(C(L,h))$ of radius $h$ containing both 0 and a curve $\xi'$.  Because the central axis of the cylinder $\pi_1(L)$ goes from $\xi'(a(\tau')) \in B_{\R^{n_1}}(2)^c$ to $\xi'(b(\tau')) \in B_{\R^{n_1}}(2)^c$ but the cylinder also contains 0, these endpoints must lie at opposite ends of the cylinder and so are almost antipodal with respect to the ball.  We now want to find a subcurve $\tilde{\xi} \subset \xi'$ contained in $B_{\R^{n_1}}(2)$ with $\diam(\tilde{\xi}) > 4 - 20h$.  The existence of such a subcurve is a simple (but tedious) Euclidean exercise that involves finding the maximum subcurve of $\xi'$ in $B_{\R^{n_1}}(2)$.  We leave the details for the reader.
\end{proof}

We now choose once and for all $\epsilon_0 < \min\left\{\frac{\eta_0}{1000s},\frac{1}{6000s^{-1/2s}}\right\}$.  Note that if $\tau' \in \Lambda'(B)$ is such that $B \in \cG_2$ and $\tau' \ni \text{Center}(B)$, then the hypothesis of Lemma \ref{l:long-line} is automatically satified as
\begin{align}
  \beta_s(\tau') \leq \hat{\beta}(\tau') < \epsilon_0 \hat{\beta}_\Gamma(B) < \frac{\eta_0}{1000} < \frac{1}{1000}. \label{e:betas-bounds}
\end{align}

The following lemma will allow us to use the definition of flat balls to find a large subcurve $\hat{\xi}$ that is far away from $\tau'$.

\begin{lemma} \label{l:disjoint-line}
  Let $B \in \cG_2$ be a ball of radius $r$.  Let $\tau' \in \Lambda'(B)$ be such that $\Center(B) \in \tau'$.  Then there is a $\xi \in \Lambda(B)$ with a subarc $\hat{\xi} \subset \xi$ with image inside $2B$ of diameter
  \begin{align*}
    \diam(\hat\xi) > 100 \epsilon_0^s \hat{\beta}_\Gamma(B)^s \diam(B)
  \end{align*}
  so that
  \begin{align*}
    d(\hat\xi,\tau') > 100 \epsilon_0^s \hat{\beta}_\Gamma(B)^s \diam(B).
  \end{align*}
\end{lemma}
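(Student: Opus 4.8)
The plan is to exploit the tension between two facts: since $B\in\cG_2$, the arc $\tau'$ is trapped in a thin stratified tube around the horizontal line carrying $L_{\tau'}$, while $\hat\beta_\Gamma(B)>0$ forces \emph{some} point of $\Gamma\cap B$ to sit at a genuinely positive distance from that same line. That point will seed $\hat\xi$. Write $\rho:=100\epsilon_0^s\hat\beta_\Gamma(B)^s\diam(B)$, recall $\diam(B)\le 2r$, note $\hat\beta_\Gamma(B)>0$ (otherwise no arc could satisfy $\hat\beta(\tau)<\epsilon_0\hat\beta_\Gamma(B)$, so $B\notin\cG_2$), and record the a priori bound $\hat\beta_\Gamma(B)\le s^{1/2s}$, obtained by feeding any horizontal line through $\Center(B)$ into the infimum defining $\hat\beta_\Gamma(B)$ and using $N(\pi_i(g))\le N(g)$.

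First I would extract the far point. Let $\hat L$ be the horizontal line extending $L_{\tau'}$. Since $\pi_i(\hat L)\supseteq\pi_i(L_{\tau'})$, using $\hat L$ as a competitor in the infimum defining $\hat\beta_\Gamma(B)$ gives
\begin{align*}
  \hat\beta_\Gamma(B)^{2s}\le\sum_{i=1}^s\sup_{z\in\Gamma\cap B}\left(\frac{d(\pi_i(z),\pi_i(L_{\tau'}))}{r}\right)^{2i},
\end{align*}
so by a pigeonhole over the $s$ layers and compactness of $\Gamma\cap B$ there are an index $i_0$ and a point $z_0\in\Gamma\cap B$ with $d(\pi_{i_0}(z_0),\pi_{i_0}(L_{\tau'}))\ge s^{-1/2i_0}\hat\beta_\Gamma(B)^{s/i_0}r$. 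On the other hand $B\in\cG_2$ gives $\hat\beta(\tau')<\epsilon_0\hat\beta_\Gamma(B)$, hence $\beta_{i_0}(\tau')<(\epsilon_0\hat\beta_\Gamma(B))^{s/i_0}$, so using $\diam(\tau')\le 3\diam(B)\le 6r$ from \eqref{e:curve-diam-bound} every point $w$ of the image of $\tau'$ satisfies $d(\pi_{i_0}(w),\pi_{i_0}(L_{\tau'}))<6(\epsilon_0\hat\beta_\Gamma(B))^{s/i_0}r$. Since $d\ge d(\pi_{i_0}(\cdot),\pi_{i_0}(\cdot))$, two triangle inequalities in $\pi_{i_0}(G)$ give
\begin{align*}
  d(z_0,\tau')\ge\left(s^{-1/2i_0}-6\epsilon_0^{s/i_0}\right)\hat\beta_\Gamma(B)^{s/i_0}r.
\end{align*}
A short computation — trading the exponent $s/i_0$ down to $s$ via $\hat\beta_\Gamma(B)^{s/i_0}\ge\hat\beta_\Gamma(B)^s s^{-(1-1/i_0)/2}$ (legitimate because $\hat\beta_\Gamma(B)\le s^{1/2s}$) and using $m:=s/i_0\ge 1$ to keep $\epsilon_0^m$ small — bounds the right side below by $\tfrac12 s^{-1/2}\hat\beta_\Gamma(B)^s r$, and since $\epsilon_0<\tfrac1{3000}s^{1/2s}$ makes $1600\sqrt{s}\,\epsilon_0^s\le 1$, this is $\ge 4\rho$. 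In particular $z_0$ does not lie on the image of $\tau'$.

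Next I would locate $\xi$ and carve out $\hat\xi$. Since $z_0\in\Gamma\cap B\subset\Gamma\cap Q$, its parameter lies in some connected component $I_\xi$ of $\gamma^{-1}(\Gamma\cap Q)$ whose image meets $B$ at $z_0$, so $\xi:=\gamma|_{I_\xi}\in\Lambda(Q)$. Let $\hat\xi\subset\xi$ be the maximal subarc containing $z_0$ all of whose points lie in $2B$ and lie at distance $\ge 2\rho$ from the image of $\tau'$; this is well defined, the relevant parameter set being closed and containing the parameter of $z_0$ in its interior (using $z_0\in B\subset(2B)^\circ$ and $d(z_0,\tau')>4\rho$). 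By construction $d(\hat\xi,\tau')\ge 2\rho>\rho$ and the image of $\hat\xi$ lies in $2B$, so it remains to bound $\diam(\hat\xi)$ from below. Let $p$ be an endpoint of $\hat\xi$. By maximality, either $p\in\partial(2B)$, or $p$ is an endpoint of $\xi$ and hence lies on $\partial Q$ (as $2B\subset Q$), or $p$ lies at distance exactly $2\rho$ from the image of $\tau'$. In the first two cases $d(\Center(B),p)\ge 2r$, so $d(z_0,p)\ge 2r-r=r\ge 2\rho$ (the last step because $\epsilon_0$ is so small that $400\epsilon_0^s\hat\beta_\Gamma(B)^s\le 1$); in the third case $d(z_0,p)\ge d(z_0,\tau')-d(p,\tau')\ge 4\rho-2\rho=2\rho$. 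Either way $\diam(\hat\xi)\ge d(z_0,p)\ge 2\rho>\rho$, which is the required estimate. (If $\Gamma\subset Q$ then $\xi$ has no endpoints, but $\tau'\subset\Gamma$ forces the parameter set defining $\hat\xi$ to be a proper subarc of $\bT$, so $p$ still falls into the first or third case and nothing changes.)

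The step I expect to be the main obstacle is the constant bookkeeping in the second paragraph: the separation $\rho\sim\epsilon_0^s\hat\beta_\Gamma(B)^s r$ being requested is intrinsically tiny (exponentially small in $s$), so to dominate it by the distance $\sim s^{-1/2i_0}\hat\beta_\Gamma(B)^{s/i_0}r$ that one actually extracts, one must keep the layer exponents $\tfrac1{2i_0}$ and $\tfrac s{i_0}$ honest rather than replacing them crudely by $\tfrac12$ and $1$, and must invoke the a priori bound $\hat\beta_\Gamma(B)\le s^{1/2s}$ to turn the exponent $s/i_0$ into $s$. Everything else is soft — compactness for the supremum, continuity of $\gamma$ for the maximal-arc construction, and triangle inequalities — together with the routine bookkeeping for the $\Gamma\subset Q$ edge case.
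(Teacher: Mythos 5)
Your proposal is correct and follows essentially the same route as the paper: use $L_{\tau'}$ as a competitor in the definition of $\hat\beta_\Gamma(B)$, pigeonhole over the $s$ layers to find a point of $\Gamma\cap B$ far from $\pi_{i_0}(L_{\tau'})$, use $B\in\cG_2$ to confine $\tau'$ to a thin layer-$i_0$ tube and conclude by triangle inequality that this point is far from $\tau'$, then take a maximal subarc of the component $\xi$ through that point. The only differences are cosmetic (you trade the exponent $s/i_0$ for $s$ via the a priori bound $\hat\beta_\Gamma(B)\le s^{1/2s}$ where the paper rescales by $\diam(\tau')$ and uses $(\epsilon_0\hat\beta_\Gamma(B))^{s/i}\ge(\epsilon_0\hat\beta_\Gamma(B))^s$, and your maximal subarc is cut by the constraints "in $2B$ and $2\rho$-far from $\tau'$" rather than by a small ball around the far point).
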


\begin{proof}
  Define $L = L_{\tau'}$.  By definition, we have
  \begin{align*}
    \sum_{i=1}^s \sup_{x \in B \cap \Gamma} \left( \frac{d(\pi_i(x),\pi_i(L))}{r} \right)^{2i} \geq \hat{\beta}_\Gamma(B)^{2s}.
  \end{align*}
  Thus, there must exist an $i$ and $x \in B \cap \Gamma$ for which
  \begin{align*}
    \left( \frac{d(\pi_i(x),\pi_i(L))}{r} \right)^{2i} \geq \frac{\hat{\beta}_\Gamma(B)^{2s}}{2s}.
  \end{align*}
  We can now bound
  \begin{multline*}
    \epsilon_0 \overset{\eqref{e:curve-diam-bound}}{<} \frac{\diam(B)}{2000s^{-1/2s}\diam(\tau')} = \frac{\diam(B)/2\diam(\tau')}{1000s^{-1/2s}} \overset{\eqref{e:curve-diam-bound}}{\leq} \frac{(\diam(B)/2\diam(\tau'))^{i/s}}{1000s^{-1/2s}} \\
    \leq \frac{(r/\diam(\tau'))^{i/s}}{1000s^{-1/2s}}
  \end{multline*}
  and so
  \begin{align*}
    \left( \frac{d(\pi_i(x),\pi_i(L))}{\diam(\tau')} \right)^{2i} \geq (1000\epsilon_0)^{2s} s\left( \frac{d(\pi_i(x),\pi_i(L))}{r} \right)^{2i} \geq (500\epsilon_0 \hat{\beta}_\Gamma(B))^{2s} > \hat{\beta}(\tau')^{2s}.
  \end{align*}
  Thus, $x \notin \tau'$ and so $x \in \xi$ for some other $\xi \in \Lambda(B)$ for which
  \begin{align*}
    d(x,L) \geq d(\pi_i(x),\pi_i(L)) \geq 500 (\epsilon_0 \hat{\beta}_\Gamma(B))^{s/i} \diam(\tau').
  \end{align*}
  On the other hand, we have that
  \begin{align*}
    \sup_{t \in I_{\tau'}} d(\pi_i(\gamma(t)),\pi_i(L)) \leq \hat{\beta}(\tau')^{s/i} \diam(\tau') < (\epsilon_0 \hat{\beta}_\Gamma(B))^{s/i} \diam(\tau').
  \end{align*}
  Altogether we get that
  \begin{align*}
    d(x,\tau') \geq d(\pi_i(x),\pi_i(\tau')) > 499 (\epsilon_0 \hat{\beta}_\Gamma(B))^{s/i} \diam(\tau') \overset{\eqref{e:betas-bounds}}{\geq} 499(\epsilon_0 \hat{\beta}_\Gamma(B))^s \diam(\tau').
  \end{align*}
  The lemma now follows by taking $\hat{\xi}$ to be a maximal subarc of $\xi$ in $B(x,100\epsilon_0^s \beta_\Gamma(B)^s\diam(B))$ containing $x$ and remembering that $x \in B$ and $100\epsilon_0^s\beta_\Gamma(B)^s \leq 1/2$.
\end{proof}

\begin{proposition} 
  Let $B \in \cG_2$ and $E = \Gamma \cap 2B$.  If we cover $E$ with balls $\{B_i\}$ so that $\diam(B_i) < 10\epsilon_0^s \hat{\beta}_\Gamma(B)^s \diam(B)$, then
  \begin{align}
    \sum_i \diam(B_i) \geq [2 + (\epsilon_0 \hat{\beta}_\Gamma(B))^s] \diam(B). \label{e:ball-cover}
  \end{align}
\end{proposition}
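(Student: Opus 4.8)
The plan is to use the geometric picture established in Lemma~\ref{l:disjoint-line}: inside $2B$ the curve $\Gamma$ contains the ``main'' arc $\tau'$ through $\Center(B)$ together with a second piece $\hat\xi$ that is separated from $\tau'$ by at least $100\epsilon_0^s\hat\beta_\Gamma(B)^s\diam(B)$. The idea is that any covering of $E=\Gamma\cap 2B$ by sets of diameter smaller than $10\epsilon_0^s\hat\beta_\Gamma(B)^s\diam(B)$ must ``pay'' the full length needed to cross $2B$ along $\tau'$ (which is essentially $2\diam(B)$ by Lemma~\ref{l:long-line}) plus an extra amount proportional to the diameter of the detour $\hat\xi$, because no single covering element can simultaneously meet $\tau'$ and $\hat\xi$.

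The first step is to apply Lemma~\ref{l:disjoint-line}: if $\Center(B)\in\tau'$ for some $\tau'\in\Lambda'(Q(B))$ then we obtain $\xi\in\Lambda(Q)$ and a subarc $\hat\xi\subset\xi$ with image in $2B$, with $\diam(\hat\xi)>100\epsilon_0^s\hat\beta_\Gamma(B)^s\diam(B)$ and $d(\hat\xi,\tau')>100\epsilon_0^s\hat\beta_\Gamma(B)^s\diam(B)$. (If no such $\tau'$ exists then $\Center(B)$ lies on no curve of $\Lambda'(Q)$, which one can handle or exclude by the construction; I expect the relevant case is exactly the one covered by the lemma.) The second step is to invoke Lemma~\ref{l:long-line} with $h=\beta_1(\tilde\tau)\diam(\tilde\tau)$ for the appropriate subarc $\tilde\tau\subset\tau'$ through $\Center(B)$: since $B\in\cG_2$ we have $\beta_s(\tilde\tau)<\epsilon_0\hat\beta_\Gamma(B)<\eta_0$ and $h<\tfrac1{10}r$ (using $\epsilon_0$ small and $\hat\beta_\Gamma(B)\le 2$), so there is a subarc of $\tau'$ inside $2B$ of diameter at least $4r-20h\ge 2\diam(B)-\text{(small)}$. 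Thus $\tau'$ carries a connected piece of $\Gamma$ crossing $2B$ of length essentially $2\diam(B)$, and $\hat\xi$ carries an additional connected piece of diameter $>100\epsilon_0^s\hat\beta_\Gamma(B)^s\diam(B)$, the two being separated.

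The third and main step is the covering (lower-Minkowski-content / length-estimate) argument. Given any cover $\{B_i\}$ of $E$ with $\diam(B_i)<10\epsilon_0^s\hat\beta_\Gamma(B)^s\diam(B)$, partition the indices into those $B_i$ that intersect $\tau'$ and those that intersect $\hat\xi$; since $d(\hat\xi,\tau')>100\epsilon_0^s\hat\beta_\Gamma(B)^s\diam(B)$ and each $\diam(B_i)$ is below this gap, no $B_i$ can do both, so these two index sets are disjoint. For a connected rectifiable arc $\sigma$ covered by sets of diameter $<\delta$ one has $\sum_{i:\,B_i\cap\sigma\neq\emptyset}\diam(B_i)\ge\diam(\sigma)$ — indeed one can walk along $\sigma$ and chain the covering sets, a standard argument (and $\diam(\sigma)$ could be replaced by $\Hd^1(\sigma)$, but the diameter bound suffices). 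Applying this to the long crossing subarc of $\tau'$ gives $\sum_{\tau'\text{-indices}}\diam(B_i)\ge 4r-20h=2\diam(B)-20h$, and applying it to $\hat\xi$ gives $\sum_{\hat\xi\text{-indices}}\diam(B_i)\ge\diam(\hat\xi)>100\epsilon_0^s\hat\beta_\Gamma(B)^s\diam(B)$. Adding the two disjoint contributions, $\sum_i\diam(B_i)\ge 2\diam(B)-20h+100\epsilon_0^s\hat\beta_\Gamma(B)^s\diam(B)$. Finally I would absorb the error: $h=\beta_1(\tilde\tau)\diam(\tilde\tau)\le\hat\beta(\tilde\tau)\diam(\tilde\tau)<\epsilon_0\hat\beta_\Gamma(B)^s\cdot 3\diam(B)$ by \eqref{e:curve-diam-bound}, so $20h<60\epsilon_0\hat\beta_\Gamma(B)^s\diam(B)$, which (since $\epsilon_0^s\le\epsilon_0$ and with room to spare from the constant $100$) is dominated by, say, $99\epsilon_0^s\hat\beta_\Gamma(B)^s\diam(B)$ after re-examining the exponents; one then gets $\sum_i\diam(B_i)\ge[2+(\epsilon_0\hat\beta_\Gamma(B))^s]\diam(B)$, as desired.

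The step I expect to be the main obstacle is the bookkeeping in the final absorption: one must check that the $20h$ loss from Lemma~\ref{l:long-line} is genuinely smaller than the $100\epsilon_0^s\hat\beta_\Gamma(B)^s\diam(B)$ gain from Lemma~\ref{l:disjoint-line}, keeping careful track of the powers of $\epsilon_0$ (which appear as $\epsilon_0$ versus $\epsilon_0^s$), of $\hat\beta_\Gamma(B)$ (bounded by a constant, so harmless), and of the factor $3$ from \eqref{e:curve-diam-bound}; this is exactly why $\epsilon_0$ was chosen smaller than $\tfrac1{3000}s^{-1/2s}$. A secondary technical point is justifying the elementary ``chaining'' inequality $\sum\diam(B_i)\ge\diam(\sigma)$ for a connected set $\sigma$ and verifying that the relevant subarcs of $\tau'$ and $\hat\xi$ are genuinely connected with the claimed diameters and lie inside $2B$, which is precisely the content of the two preceding lemmas.
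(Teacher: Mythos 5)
Your argument is the same as the paper's: take the long subarc $\tilde{\tau}\subset\tau'$ from Lemma \ref{l:long-line}, the separated subarc $\hat{\xi}$ from Lemma \ref{l:disjoint-line}, observe that no $B_i$ (having diameter below the separation) can meet both, apply the chaining inequality $\sum_{B_i\cap\sigma\neq\emptyset}\diam(B_i)\geq\diam(\sigma)$ to each connected piece, and absorb the $20h$ loss into the $100\epsilon_0^s\hat{\beta}_\Gamma(B)^s\diam(B)$ gain. The one point you left unresolved --- and where your bookkeeping as written does not close --- is the absorption: you bound $h\leq\hat{\beta}(\tau')\diam(\tau')<3\epsilon_0\hat{\beta}_\Gamma(B)^s\diam(B)$ and then hope that $60\epsilon_0\hat{\beta}_\Gamma(B)^s$ is dominated by $99\epsilon_0^s\hat{\beta}_\Gamma(B)^s$, which fails for small $\epsilon_0$ since $\epsilon_0>\epsilon_0^s$; also the flatness hypothesis gives $\hat{\beta}(\tau')<\epsilon_0\hat{\beta}_\Gamma(B)$, not $\epsilon_0\hat{\beta}_\Gamma(B)^s$. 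The correct (and the paper's) bookkeeping is to use the definition $\hat{\beta}(\tau')^{2s}=\sum_i\beta_i(\tau')^{2i}\geq\beta_1(\tau')^2$, so that $\beta_1(\tau')\leq\hat{\beta}(\tau')^s<(\epsilon_0\hat{\beta}_\Gamma(B))^s$, whence by \eqref{e:curve-diam-bound} one gets $20h\leq 60\epsilon_0^s\hat{\beta}_\Gamma(B)^s\diam(B)$, which sits comfortably below the $100\epsilon_0^s\hat{\beta}_\Gamma(B)^s\diam(B)$ gain with no comparison between $\epsilon_0$ and $\epsilon_0^s$ needed; with this one-line fix your proof is complete and coincides with the paper's.
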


\begin{proof}
  Let $\tau' \in \Lambda'(B)$ contain $\Center(B)$, $\tilde{\tau} \subset \tau'$ be the subcurve from Lemma \ref{l:long-line}, and $\hat{\xi}$ be the curve from Lemma \ref{l:disjoint-line} (applied to $\tau'$).  Note that any $B_i$ may intersect at most one of the images of $\tilde{\tau}$ and $\hat{\xi}$.  Thus, as $\beta_1(\tau') \leq \hat{\beta}(\tau')^s \leq \epsilon_0^s \hat{\beta}_\Gamma(B)^s$, we get that
  \begin{align*}
    \sum_i \diam(B_i) &\geq \sum_{B_i \cap \tilde{\tau} \neq \emptyset} \diam(B_i) + \sum_{B_i \cap \hat{\xi} \neq \emptyset} \diam(B_i) \geq \diam(\tilde{\tau}) + \diam(\hat{\xi}) \\
    &\geq 2\diam(B) - 20 \epsilon_0^s \hat{\beta}_\Gamma(B)^s \diam(\tau') + 100 \epsilon_0^s \hat{\beta}_\Gamma(B)^s \diam(B) \\
    &\overset{\eqref{e:curve-diam-bound}}{\geq} (2 + \epsilon_0^s \hat{\beta}_\Gamma(B)^s) \diam(B).
  \end{align*}
\end{proof}

Now fix an integer $M \geq \lfloor -\log_2(s) \rfloor =: M_0$.  As $\hat{\beta}_E \leq s$ always, we can then define
\begin{align*}
  \cB^M := \{B \in \cG_2 : \hat{\beta}_\Gamma(B) \in [2^{-M-1},2^{-M}]\}.
\end{align*}
Set $J_M = \lceil s(M - M_0) - s\log(10\epsilon_0) + 10 \rceil$ and we apply Lemma \ref{l:separate-scales} to $\cB^M$ with $J = J_M$ to get $\{\cB_i^M\}_{i=1}^{D_M}$ where $D_M = D \cdot J_M$.  Note that this is a second application of Lemma \ref{l:separate-scales} with a new value of $J$ (as opposed to $J = 100$).  For each $i \in \{1,...,D_M\}$, apply the dyadic cube construction following Lemma \ref{l:separate-scales} to $\cB_i^M$ to get dyadic cubes $\Delta(\cB^M,i)$.  Fix one such $\Delta = \Delta(\cB^M,i)$.

\begin{proposition} \label{p:cube-cover}
  Let $B \in \cG_2$ and suppose $Q = Q(B) \in \Delta$ is decomposed as
  \begin{align*}
    Q = \left( \bigcup_i Q_i \right) \cup R_Q
  \end{align*}
  where $Q_i = Q(B_i) \in \Delta$ are maximal so that $Q_i \subsetneq Q$ and $R_Q$ is chosen so that the union is disjoint.  Then
  \begin{align*}
    \sum_i \diam(Q_i) + \Hd^1(R_Q \cap \Gamma) \geq \diam(Q) \left( 1 + \frac{1}{10} \epsilon_0^s \beta_\Gamma(B)^s \right).
  \end{align*}
\end{proposition}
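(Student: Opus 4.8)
The plan is to reduce everything to the ball‑growth estimate of the Proposition immediately preceding this one, read there as a statement about covers by \emph{arbitrary} small sets. We may assume $\Hd^1(R_Q)<\infty$, as otherwise there is nothing to prove, and we set $E:=\Gamma\cap 2B$. Since $Q=Q(B)\in\Delta=\Delta(\cB^M,i)$ we have $B\in\cB^M$, hence $\hat{\beta}_\Gamma(B)\geq 2^{-M-1}$, and the dyadic cube construction gives $2B\subset Q\subset 2(1+\delta)B$ with $\delta:=2^{-(J_M-2)}$, so that $2\diam(B)\leq\diam(Q)\leq 2(1+\delta)\diam(B)$. Because $J_M$ was chosen of size $\asymp sM$ while $\hat{\beta}_\Gamma(B)\asymp 2^{-M}$, the parameter $J_M$ is large enough to guarantee both
\begin{align}
  (1+\delta)\,2^{-J_M+1}\,\diam(B) &< 10\,\epsilon_0^s\,\hat{\beta}_\Gamma(B)^s\,\diam(B)=:\delta_0, \label{e:plan-JM-i}\\
  \delta &< \tfrac{1}{10}\,(\epsilon_0\hat{\beta}_\Gamma(B))^s; \label{e:plan-JM-ii}
\end{align}
these are the only consequences of the size of $J_M$ the argument uses. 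Now I would build an efficient cover of $E$ by sets of diameter below $\delta_0$. Since $2B\subset Q=(\bigsqcup_i Q_i)\sqcup R_Q$, we get $E=(\bigsqcup_i(E\cap Q_i))\sqcup(E\cap R_Q)$. Each $Q_i=Q(B_i)$ is a maximal \emph{proper} subcube of $Q$, so $r(B_i)\leq 2^{-J_M}r(B)$, and the cube estimate applied to $B_i$ gives $\diam(Q_i)\leq 4(1+\delta)r(B_i)\leq (1+\delta)2^{-J_M+1}\diam(B)<\delta_0$ by \eqref{e:plan-JM-i}. For the remainder, $E\cap R_Q\subset\Gamma$ has $\Hd^1(E\cap R_Q)\leq\Hd^1(R_Q)<\infty$, so for every $\epsilon'>0$ the definition of Hausdorff measure supplies a cover $\{U_j\}_j$ of $E\cap R_Q$ by sets of diameter $<\delta_0$ with $\sum_j\diam(U_j)\leq\Hd^1(R_Q)+\epsilon'$. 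Thus $\{Q_i\}_i\cup\{U_j\}_j$ covers $E$ by sets of diameter $<\delta_0$.

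Next I would feed this cover into the preceding Proposition. Although it is stated for covers by balls, its proof uses only (a) that each covering set meets at most one of the two arcs $\tilde{\tau}$ (from Lemma \ref{l:long-line}) and $\hat{\xi}$ (from Lemma \ref{l:disjoint-line}), which holds as soon as the covering sets have diameter below $d(\tilde{\tau},\hat{\xi})$, and indeed $d(\tilde{\tau},\hat{\xi})\geq d(\tau',\hat{\xi})>10\delta_0$ there; and (b) the elementary fact that a cover of a connected set has total diameter at least the diameter of that set. Hence the conclusion of that Proposition holds verbatim for any cover of $E$ by sets of diameter $<\delta_0$. Applying it to $\{Q_i\}_i\cup\{U_j\}_j$ and letting $\epsilon'\to 0$ yields
\begin{align*}
  \sum_i\diam(Q_i)+\Hd^1(R_Q)\;\geq\;\big(2+(\epsilon_0\hat{\beta}_\Gamma(B))^s\big)\diam(B)\;\geq\;\frac{2+(\epsilon_0\hat{\beta}_\Gamma(B))^s}{2(1+\delta)}\,\diam(Q),
\end{align*}
where the last step used $\diam(B)\geq\diam(Q)/(2(1+\delta))$. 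Finally, with $x=\tfrac12(\epsilon_0\hat{\beta}_\Gamma(B))^s\leq\tfrac12$ we have $\frac{2+(\epsilon_0\hat{\beta}_\Gamma(B))^s}{2(1+\delta)}=\frac{1+x}{1+\delta}\geq(1+x)(1-\delta)\geq 1+x-2\delta\geq 1+\tfrac1{10}(\epsilon_0\hat{\beta}_\Gamma(B))^s$ by \eqref{e:plan-JM-ii}, which is the asserted inequality.

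The bookkeeping in the first paragraph is routine; the two load‑bearing points are \eqref{e:plan-JM-i}--\eqref{e:plan-JM-ii} and the observation that the preceding Proposition's lower bound is really about covers by arbitrary sets. The former is exactly why $J_M$ was defined to scale linearly in $sM$: it forces the child‑scale gap $2^{-J_M}$ and the cube‑to‑ball distortion $2^{-(J_M-2)}$ to be much smaller than $(\epsilon_0\hat{\beta}_\Gamma(B))^s$. The latter is what lets us charge each child cube $Q_i$ its \emph{own} diameter rather than the diameter of an enclosing ball, and -- more importantly -- charge $E\cap R_Q$ only $\Hd^1(R_Q)$ rather than twice that; without it the coefficient of $\diam(Q)$ would degrade from $1$ to $\tfrac12$ and the inequality would be useless for the telescoping that follows. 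I expect that second point -- arranging that the constant in front of $\diam(Q)$ comes out exactly $1$ -- to be the real crux of the proof.
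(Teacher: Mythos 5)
Your proposal is essentially the paper's own proof: the paper handles this proposition by invoking Proposition 4.7 of \cite{li-schul-1} with \eqref{e:ball-cover} substituted for their equation (28), and that argument is exactly what you reconstruct — the maximal children $Q_i$ fall below the cover threshold because of the choice of $J_M$, the remainder $R_Q$ is covered near-optimally using the definition of $\Hd^1$, and the preceding proposition (whose proof indeed only needs covering sets of small diameter, not balls) supplies the lower bound, after which the cube-to-ball distortion $2^{-(J_M-2)}$ is absorbed. The only quibble is at the level of constants: your two displayed consequences of the size of $J_M$ require the additive constant in $J_M$ to grow like a multiple of $s$ (with the literal $\lceil sM-s\log(10\epsilon_0)+10\rceil$ the factor $20^s$ eventually beats $2^{10}$), a harmless adjustment that the paper's one-line proof elides as well.
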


\begin{proof}
  The proof is essentially the same as Proposition 4.7 of \cite{li-schul-1} except we will use our \eqref{e:ball-cover} instead of their equation (28).
\end{proof}

We can now prove the following proposition.
\begin{proposition}
  \begin{align*}
    \sum_{Q \in \Delta} \diam(Q) \leq \frac{10}{\epsilon_0^s} 2^{sM} \Hd^1(\Gamma).
  \end{align*}
\end{proposition}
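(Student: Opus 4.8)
The plan is a standard Carleson/telescoping packing argument over the tree of cubes in the fixed family $\Delta = \Delta(\cB^M,i)$, with Proposition \ref{p:cube-cover} supplying the one-step ``gain''. Note first that every cube of $\Delta$ has the form $Q(B_Q)$ for a ball $B_Q \in \cB_i^M \subset \cB^M$, so $\hat{\beta}_\Gamma(B_Q) \geq 2^{-M-1}$ and hence $\hat{\beta}_\Gamma(B_Q)^s \geq 2^{-s(M+1)}$; this is the only place the membership $B_Q \in \cB^M$ enters.

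I would then set up the combinatorics. Partially order $\Delta$ by inclusion; for a non-maximal $Q' \in \Delta$ its \emph{parent} is the smallest cube of $\Delta$ strictly containing it (well defined since the cubes strictly containing $Q'$ form a nonempty, totally ordered family), and for $Q \in \Delta$ let $\mathrm{ch}(Q)$ be the set of cubes whose parent is $Q$. By the maximality built into the decomposition of Proposition \ref{p:cube-cover}, $\mathrm{ch}(Q)$ is exactly the collection $\{Q_i\}$ appearing there, so $R_Q = Q \setminus \bigcup_{Q' \in \mathrm{ch}(Q)} Q'$. The key point is that the remainders $\{R_Q\}_{Q \in \Delta}$ are pairwise disjoint: a proper sub-cube of $Q$ lying in $\Delta$ is contained in one of the $Q_i$ and hence disjoint from $R_Q$, so $R_{Q'}$ with $Q' \subsetneq Q$ misses $R_Q$, while remainders of incomparable cubes are disjoint as sets. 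Consequently $\sum_{Q \in \Delta} \Hd^1(R_Q) \leq \Hd^1(\Gamma)$.

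Next, for each $Q = Q(B_Q) \in \Delta$ rewrite Proposition \ref{p:cube-cover} in excess form,
\begin{align*}
  \tfrac{1}{10}\epsilon_0^s\,\hat{\beta}_\Gamma(B_Q)^s\,\diam(Q) \;\leq\; \Big(\sum_{Q' \in \mathrm{ch}(Q)} \diam(Q')\Big) + \Hd^1(R_Q) - \diam(Q),
\end{align*}
and sum over $Q \in \Delta$. Each cube of $\Delta$ is the child of exactly one cube of $\Delta$ unless it is maximal, so the first and last terms telescope to $-\sum_{Q\ \mathrm{maximal}} \diam(Q) \leq 0$, giving
\begin{align*}
  \tfrac{1}{10}\epsilon_0^s\,2^{-s(M+1)}\sum_{Q \in \Delta} \diam(Q) \;\leq\; \sum_{Q \in \Delta} \tfrac{1}{10}\epsilon_0^s\,\hat{\beta}_\Gamma(B_Q)^s\,\diam(Q) \;\leq\; \sum_{Q \in \Delta} \Hd^1(R_Q) \;\leq\; \Hd^1(\Gamma).
\end{align*}
Rearranging yields $\sum_{Q \in \Delta} \diam(Q) \leq 10 \cdot 2^{s}\,\epsilon_0^{-s}\,2^{sM}\,\Hd^1(\Gamma)$, which is the asserted estimate after absorbing the harmless factor $2^s$ (e.g.\ by replacing $\epsilon_0$ with $\epsilon_0/2$ earlier, or simply noting $\hat{\beta}_\Gamma(B_Q) \asymp 2^{-M}$). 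Since $\Delta$ may contain cubes at arbitrarily small scales, I would first run this on the finite subfamily of cubes of diameter at least $\rho$ — there the remainders only enlarge, so the bound is unaffected — and then let $\rho \to 0$ by monotone convergence.

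I expect the only real difficulty to be bookkeeping rather than a new idea: checking that the tree structure on $\Delta$ is arranged so that each cube is counted exactly once as a child, that the remainder pieces $R_Q$ stay disjoint all the way up the tree (not merely among siblings), so that their $\Hd^1$-masses sum to at most $\Hd^1(\Gamma)$, and that one telescopes over a finite truncation before passing to the limit. Everything quantitative is already contained in Proposition \ref{p:cube-cover}.
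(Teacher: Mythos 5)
Your proof is correct and is in substance the same as the paper's: the paper simply cites Proposition 4.8 of \cite{li-schul-1} with the gain supplied by Proposition \ref{p:cube-cover}, i.e.\ the identical combination of the one-step excess inequality, disjointness of the remainders $R_Q$ over the whole tree, and a summation over $\Delta$ (organized there as a geometric series $\sum_n q^n = \tfrac{1}{1-q}$ with $q = (1+\epsilon_0^s 2^{-sM}/10)^{-1}$ rather than your global telescoping). The only points to tidy are cosmetic: your extra factor $2^s$ coming from $\hat{\beta}_\Gamma(B) \geq 2^{-M-1}$ is harmless for how the proposition is used, and in the truncation step the claim that ``the remainders only enlarge, so the bound is unaffected'' needs the small observation that a dropped child $Q_i$ satisfies $\diam(Q_i) \leq C\,\Hd^1(\Gamma \cap Q_i)$ (so its diameter is absorbed into the enlarged remainder at the cost of a constant), after which letting $\rho \to 0$ works as you say.
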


\begin{proof}
  The proof is essentially the same as Proposition 4.8 of \cite{li-schul-1} except we use our Proposition \ref{p:cube-cover} instead of their Proposition 4.7.  Note that this changes their quantity $q$ to $q = (1 + \epsilon_0^s 2^{-sM}/10)^{-1}$ and so we get
  \begin{align*}
    \sum q^n = \frac{1}{1-q} \leq \frac{10}{\epsilon_0^s} 2^{-sM}.
  \end{align*}
\end{proof}

We can finally finish by proving the following proposition, which is stronger than \eqref{e:flat}, the goal of this subsection.

\begin{proposition} \label{e:flat-bound}
  \begin{align*}
    \sum_{B \in \cG_2} \hat{\beta}_\Gamma(B)^{s+1} \diam(B) \leq C \Hd^1(\Gamma).
  \end{align*}
\end{proposition}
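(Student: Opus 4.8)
The plan is to partition $\cG_2$ by the dyadic magnitude of $\hat{\beta}_\Gamma$ and reduce the sum to a convergent series. Write $\cB^M = \{B \in \cG_2 : \hat{\beta}_\Gamma(B) \in [2^{-M-1}, 2^{-M}]\}$ as before. Taking the infimizing horizontal line in \eqref{e:orig-def} to pass through the center of $B$ shows that each summand there is at most $1$, so $\hat{\beta}_\Gamma(B) \le s^{1/2s} < 2$ for every ball; hence $\cG_2 = \bigcup_{M \ge M_0}\cB^M$ for some integer $M_0$ depending only on $G$ ($M_0 = -1$ works), and since the scales with $M<0$, if any, are treated identically (or absorbed into $M=0$) I will just write $\sum_{M\ge 0}$. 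It thus suffices to bound $\sum_{M\ge 0}\sum_{B\in\cB^M}\hat{\beta}_\Gamma(B)^{s+1}\diam(B)$.

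The first step is the per-scale estimate
\[
  \sum_{B\in\cB^M}\diam(B) \le C(M+1)\,2^{sM}\,\Hd^1(\Gamma).
\]
Recall that $\cB^M$ was split into $D_M = D\cdot J_M$ subfamilies $\{\cB_i^M\}_{i=1}^{D_M}$ via Lemma \ref{l:separate-scales}, with $J_M = \lceil sM - s\log(10\epsilon_0) + 10\rceil$; since $\epsilon_0 < 1/10$, the quantity $-s\log(10\epsilon_0)$ is a positive constant depending only on $G$, so $D_M \le C(M+1)$. For each fixed $i$, the previous proposition gives $\sum_{Q\in\Delta(\cB^M,i)}\diam(Q) \le \tfrac{10}{\epsilon_0^s}2^{sM}\Hd^1(\Gamma)$. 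Because $B \subseteq 2B \subseteq Q(B)$ we have $\diam(B) \le \diam(Q(B))$, and $B \mapsto Q(B)$ is injective on $\cB_i^M$: two distinct balls of equal radius produce cubes at positive distance by property (3) of the dyadic cube construction, while the containments $2B\subseteq Q(B)\subseteq 2(1+2^{-98})B$ force the radii of two balls with $Q(B)=Q(B')$ to agree up to the factor $1+2^{-98}$, incompatible with the ratio $\ge 2^{J_M}$ of the radii of distinct balls of $\cB_i^M$. Hence $\sum_{B\in\cB_i^M}\diam(B) \le \sum_{Q\in\Delta(\cB^M,i)}\diam(Q) \le \tfrac{10}{\epsilon_0^s}2^{sM}\Hd^1(\Gamma)$, and summing over $i\le D_M$ gives the per-scale estimate.

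The second step is to assemble the pieces. On $\cB^M$ we have $\hat{\beta}_\Gamma(B)^{s+1}\le 2^{-(s+1)M}$, so by the per-scale estimate
\[
  \sum_{B\in\cB^M}\hat{\beta}_\Gamma(B)^{s+1}\diam(B) \le 2^{-(s+1)M}\sum_{B\in\cB^M}\diam(B) \le C(M+1)\,2^{-(s+1)M}2^{sM}\,\Hd^1(\Gamma) = C(M+1)\,2^{-M}\,\Hd^1(\Gamma),
\]
and summing over $M\ge 0$, using $\sum_{M\ge 0}(M+1)2^{-M} = 4$, yields $\sum_{B\in\cG_2}\hat{\beta}_\Gamma(B)^{s+1}\diam(B) \le C\Hd^1(\Gamma)$. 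Finally, since $\hat{\beta}_\Gamma(B) < 2$ we have $\hat{\beta}_\Gamma(B)^{2s} \le C_G\,\hat{\beta}_\Gamma(B)^{s+1}$ for a constant $C_G$ depending only on $G$, so this proposition implies \eqref{e:flat}.

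The step that requires genuine care — and that explains why the exponent is taken to be $s+1$ rather than $s$ — is the assembly in the second step: the number $D_M$ of subfamilies at scale $M$ grows linearly in $M$ because of the $sM$ term in $J_M$, and the gain $2^{-(s+1)M}2^{sM} = 2^{-M}$ obtained from using an exponent strictly larger than $s$ is precisely what absorbs this linear growth and makes $\sum_M(M+1)2^{-M}$ converge; with exponent $s$ one would be left with the divergent $\sum_M(M+1)$. The geometric content has already been isolated in the previous proposition and in Proposition \ref{p:cube-cover}, which rest on \eqref{e:ball-cover} and ultimately on Lemmas \ref{l:long-line} and \ref{l:disjoint-line}.
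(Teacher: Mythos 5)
Your proof is correct and follows essentially the same route as the paper, which simply cites the argument for equation (13) of \cite{li-schul-1} with the exponent $2$ replaced by $s+1$: dyadic decomposition in $\hat{\beta}_\Gamma$, the per-scale cube estimate from the preceding proposition, the $D_M \lesssim M+1$ count from Lemma \ref{l:separate-scales}, and the extra power $2^{-M}$ absorbing the linear growth. Your reconstruction, including the observation that boundedness of $\hat{\beta}_\Gamma$ makes the $\hat{\beta}^{s+1}$ bound stronger than \eqref{e:flat}, matches the intended argument.
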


\begin{proof}
  The proof is essentially the same as the proof of equation (13) of \cite{li-schul-1} (right after the proof of their Proposition 4.8).  Instead of raising by 2, we raise by $s+1$.  See also the discussion after the proof of Lemma 4.7 of \cite{CLZ} for a small correction of the argument in \cite{li-schul-1}.
\end{proof}

Note that the exponent of $\hat{\beta}$ in Proposition \ref{e:flat-bound} is much better than the $2s$ that is needed.  This does not mean that Theorem \ref{th:necessity} can be improved as it is the non-flat balls that present the bottleneck.  Indeed, even in Hilbert spaces, where this flat ball argument was first presented, one gets the better bound $\sum_{B \in \cG_2} \beta_\Gamma(B) \diam(B) \leq C \ell(\Gamma)$ (see Corollary 3.26 of \cite{Schul-TSP}).

\section{Sufficiency}

We will prove the following theorem which will take care of the sufficient direction.

\begin{theorem} \label{th:sufficiency}
  Let $G$ be a step $s$ Carnot group with Hausdorff dimension $Q$ and $E \subset G$ a subset.  Define
  \begin{align*}
    \hat{\beta}(E) = \int_0^\infty \int_G \hat{\beta}_E(x,r)^{2s} ~dx \frac{dr}{r^Q}.
  \end{align*}
  There exists some constant $C > 0$ depending on $G$ and its Euclidean structure $|\cdot|$  so that if $\diam(E) + \hat{\beta}(E)$ is finite then $E$ lies on a rectifiable curve of length no more than $C(\diam(E) + \hat{\beta}(E))$.
\end{theorem}

One sees that Theorems \ref{th:necessity} and \ref{th:sufficiency} together imply Theorem \ref{th:main}.

The following lemma is an improvement on Lemma \ref{l:beta-balls}.  Note that it does not follow from Lemma \ref{l:beta-balls} because the closest point in $L$ to $p$ is chosen after the level $i$ is fixed.   As with Lemma \ref{l:beta-balls}, it holds for both $d_\infty$ and $d_{HS}$.

\begin{lemma} \label{l:euc-ball-contain}
  There exists a constant $C > 0$ depending only on $d$, $\eta_0 < 1$, and the Euclidean structure $|\cdot|$ so that if $p \in G$ and $L \subset G$ is a horizontal line for which $\max_i (d(\pi_i(p),\pi_i(L))/\ell)^{2i} = \eta < \eta_0$ for some $\ell > 0$, then $p \in L \cdot \delta_\ell(B_{\R^n}(C\eta^{1/2}))$.
\end{lemma}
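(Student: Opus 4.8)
The statement improves Lemma \ref{l:beta-balls} by getting the exponent $1/2$ on $\eta$ rather than having to pass through $\eta^{1/s}$-type losses; the key point is that $L$ is a \emph{horizontal} line, so modulo a left translation we may put $L$ through the origin in a first-layer direction $w \in S^{n_1-1}$, and then the control $d(\pi_i(p),\pi_i(L)) \le (\eta^{1/2i})\ell$ should be turned into a Euclidean-coordinate estimate on $p^{-1} \cdot (\text{closest point of } L)$ that is linear (not fractional) in $\eta^{1/2}$ on the first layer and still of the right homogeneity on higher layers.

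First I would dilate so that $\ell = 1$ and left-translate so that $L = \{(tw,0,\dots,0): t\in\R\}$ for some $w\in S^{n_1-1}$; this is allowed since left translation is an isometry of $d$ and commutes with the $\pi_i$ in the sense that it sends horizontal lines to horizontal lines and cosets to cosets (cf.\ the remark after Lemma \ref{l:pi-NH-decomp} identifying $d(\pi_i(p),\pi_i(L))$ with a coset distance). Let $t_0$ realize $d(\pi_1(p),\pi_1(L))$, i.e.\ $t_0 w$ is the metric projection in $\pi_1(G)=\R^{n_1}$, and set $q := (t_0 w, 0,\dots,0)\in L$. Write $g := q^{-1}p$; the goal is to show each Euclidean coordinate $g_i$ of $g$ satisfies $|g_i| \le C\eta^{1/2}$, because then $g \in B_{\R^n}(C'\eta^{1/2})$ and $p = q\cdot g \in L\cdot B_{\R^n}(C'\eta^{1/2})$ as desired. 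For the first layer, $g_1 = \pi_1(p) - t_0 w$, and $|g_1| = d_{\mathrm{eucl}}(\pi_1(p),\pi_1(L)) \le d(\pi_1(p),\pi_1(L)) \cdot C \le C\eta^{1/2}$ using that the Hebisch–Sikora (or $d_\infty$) norm is comparable to the Euclidean norm on the horizontal layer and $\eta^{1/2}\le \eta^{1/2}$; here we crucially use that $L$ is horizontal so that its $\pi_1$-image is an honest affine line in $\R^{n_1}$ and the metric projection agrees up to constants with the Euclidean one for $\eta_0$ small.

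For the higher layers $i\ge 2$ I would argue inductively on $i$, exactly as in the proof of Lemma \ref{l:beta-balls} but keeping track of the sharper exponent. The coordinate $g_i$ of $g = q^{-1}p$ equals the $i$-th layer coordinate of $\pi_i(q)^{-1}\pi_i(p)\in\pi_i(G)$, since the BCH polynomial $P_i$ depends only on lower layers. Now $\pi_i(q)$ lies on $\pi_i(L)$, so $d_\infty(\pi_i(q)^{-1}\pi_i(p))\le C\, d(\pi_i(p),\pi_i(L)) \le C\eta^{1/2i}$, whence $|g_i| \le (C\eta^{1/2i})^i = C^i\eta^{1/2}$. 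Summing the finitely many layers gives $|g|\le C\eta^{1/2}$. The main subtlety — and the place where $L$ being horizontal is doing real work, unlike in Lemma \ref{l:beta-balls} — is the first-layer step: one must know that for $\eta<\eta_0$ the nearest point of $L$ to $p$ in the $\pi_1$-metric is within $O(\eta^{1/2})$ \emph{Euclidean} distance of $\pi_1(p)$, and that after translating $q$ to the base point of $L$ the remaining displacement along $L$ does not spoil the higher-layer estimates; this is handled by the observation that $q\in L$ and that left translation by $q^{-1}$ preserves all the relevant distances. I expect that verifying the first-layer comparison (Euclidean projection vs.\ homogeneous-metric projection onto a horizontal line, uniformly for small $\eta$) is the only genuinely nonroutine point; the rest is the bookkeeping already present in Lemma \ref{l:beta-balls}.
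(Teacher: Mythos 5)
Your reduction to coordinates and your treatment of the first layer are fine, but the step handling the layers $i\ge 2$ contains a genuine gap, and it is exactly where the whole difficulty of the lemma sits. You fix the single point $q=(t_0w,0,\dots,0)\in L$ using only the first-layer projection, and then assert that since $\pi_i(q)\in\pi_i(L)$ one has $d_\infty(\pi_i(q)^{-1}\pi_i(p))\le C\,d(\pi_i(p),\pi_i(L))$. That inequality goes the wrong way: membership $\pi_i(q)\in\pi_i(L)$ only gives $d(\pi_i(p),\pi_i(L))\le d(\pi_i(p),\pi_i(q))$. The hypothesis controls the distance from $p$ to the \emph{line} at each layer, and the point of $L$ nearly realizing that distance could a priori be a different point for each $i$, far from your $q$; nothing in "left translation by $q^{-1}$ preserves all the relevant distances" rules this out, since after translating you are still facing a distance to a line, not to the basepoint. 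So the statement you actually need -- that one point of $L$ works simultaneously for all layers with the $\eta^{1/2}$ bound -- is assumed rather than proved; your appeal to the argument of Lemma \ref{l:beta-balls} does not help, because that lemma starts from distances to a fixed point, which is precisely what is missing here.

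The paper closes this gap by an induction on the layer $k$: assuming the coordinates of $q^{-1}p$ in layers $1,\dots,k-1$ are already $O(\eta^{1/2})$, it bounds $d(\pi_k(p),\pi_k(vt,0,\dots,0))$ from below \emph{uniformly in $t$}, splitting into $|t|\ge 1$, where the first-layer term $|x_1-vt|$ alone already exceeds $\eta^{1/2k}$, and $|t|\le 1$, where Lemma \ref{l:BCH-bound} (applied with the small lower-layer coordinates playing the role of the $y$'s) shows the BCH drift $P_k(-vt,0,\dots,0,x_1,\dots,x_{k-1})$ is $O(\eta^{1/2})$, so the $k$-th coordinate cannot be cancelled by sliding along $L$. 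If $|x_k|\ge C\eta^{1/2}$ with $C$ large this forces $d(\pi_k(p),\pi_k(L))>\eta^{1/2k}$, a contradiction. Some argument of this type -- a quantitative statement that moving along $L$ by a bounded amount changes the $k$-th layer coordinate by at most $O(\eta^{1/2})$ once the lower layers are pinned down, together with the exclusion of large $|t|$ via the first layer -- is unavoidable, and it is absent from your proposal.
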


\begin{proof}
  As usual, we dilate so that $\ell = 1$.  Choose $C_0 > 0$ so that $N(\pi_k(g)) \geq C_0 \sum_{1 \leq i \leq k} |g_i|^{1/i}$ for all $k$.  By choosing $\eta_0$ sufficiently small, we may suppose $\eta^{1/2s} < \min\{1,C_0/2\}$.

  We will prove for any $k$ that there exists $\eta_0 < 1$ and $c_1,...,c_k > 0$ so that if
  $$\max_{1 \leq i \leq k} d(\pi_i(p),\pi_i(L))^{2i} \leq \eta < \eta_0,$$
  then
  \begin{align}
    p \in L \cdot \left( \prod_{i=1}^k [-c_i\eta^{1/2},c_i\eta^{1/2}]^{n_i} \times \prod_{i=k+1}^s \R^{n_i}\right). \label{e:induct-subproof}
  \end{align}
  The lemma follows from this statement for $k = s$.

  The case of $k = 1$ is obvious.  Assume we have the statement up to $k-1$ and now assume $\max_{1 \leq i \leq k} d(\pi_i(p),\pi_i(L))^{2i} \leq \eta$.  Then by the induction hypothesis, we have
  $$p \in L \cdot \left( \prod_{i=1}^{k-1} [-c_i\eta^{1/2},c_i\eta^{1/2}]^{n_i} \times \prod_{i=k}^s \R^{n_i}\right).$$
  We choose $\eta_0$ so that $c_i\eta_0^{1/2} < 1/2$ for all $i \in \{1,...,k-1\}$.  Seeking a contradiction, let us suppose that
  $$p \in L \cdot \left( \prod_{i=1}^{k-1} [-c_i\eta^{1/2},c_i\eta^{1/2}]^{n_i} \times \left([-C\eta^{1/2},C\eta^{1/2}]^{n_i}\right)^c \times \prod_{i=k+1}^s \R^{n_i}\right),$$
  for some sufficiently large $C$ to be chosen (that will not depend on $p$).  We translate the setting so that $L = \{(vt,0,...,0)\}_{t \in \R}$ where $v \in S^{n_1-1}$ and $p = (x_1,...,x_s)$ where $|x_i| \leq c_i \eta^{1/2} < 1/2$ for $1 \leq i \leq k-1$ and $|x_k| \geq C\eta^{1/2}$.  Define
  \begin{align*}
    f(t) = d(\pi_k(p),\pi_k(vt,0,...,0)) \geq C_0\left(|x_1 - vt| + |x_k + P_k(-vt,0,...,0,x_1,...,x_{k-1})|^{1/k}\right),
  \end{align*}
  where we used the definition of $C_0$.  We will prove $f(t) > \eta^{1/2k}$ for all $t$, which will be our contradiction.

  First suppose $|t| \geq 1$.  Recall that $|v| = 1$ so that
  $$C_0|x_1 - vt| \geq (1 - |x_1|)C_0 \geq C_0/2 > \eta^{1/2k}.$$
  Thus, assume $|t| \leq 1$.  Then by Lemma \ref{l:BCH-bound}, we get that $|P_k(-vt,0,...,0,x_1,...,x_{k-1})| \leq C_1\eta^{1/2}$ for some $C_1$ depending only on $G$.  Thus, if we choose $C > C_1 + C_0^{-k}$, then
  \begin{align*}
    C_0|x_k + P_k(-vt,0,...,0,x_1,...,x_{k-1})|^{1/k} \geq C_0(C\eta^{1/2} - |P_k(-vt,0,...,0,x_1,...,x_{k-1})|)^{1/k} > \eta^{1/2k}.
  \end{align*}
\end{proof}

Following \cite{FFP}, we say that an ordered triple of points $p_1,p_2,p_3$ in a metric space is {\it orderable} if
\begin{align*}
  d(p_1,p_3) \geq \max\{d(p_1,p_2), d(p_2,p_3) \}.
\end{align*}
Note that $t_1,t_2,t_3 \in \R$ is orderable if and only if $t_2$ is between $t_1$ and $t_3$.

The next lemma says that the triangle inequality excess of orderable points in $\R^n$ can be bounded from above by $\frac{h^2}{\ell}$ where $h$ is the height of the triangle and $\ell$ is the diameter.

\begin{lemma} \label{l:euc-pythag}
  Let $\alpha > 0$ and $P_1,P_2,P_3 \in \R^n$ be orderable points such that $\alpha \ell \leq |P_i - P_j| \leq \ell$ for some $\ell > 0$.  Let $L$ be any affine line.  There exists a constant $C > 0$ depending only on $\alpha$ so that
  \begin{align}
    |P_1 - P_2| + |P_2 - P_3| - |P_1 - P_3| \leq C \max_{1 \leq i \leq 3} \frac{|P_i - L|^2}{\ell}. \label{e:euclidean-delta}
  \end{align}
\end{lemma}

\begin{proof}
  The three points lie on a Euclidean plane.  As the right hand side of \eqref{e:euclidean-delta} can only increase for lines not in the plane, we may assume $\R^n = \R^2$.  We rotate and dilate so that $\ell = 1$, $P_1 = 0$, and $P_3 = (\gamma,0)$ where $\gamma \in [\alpha, 1]$.  Let $P_2 = (x,y)$.  As $P_2 \notin \{P_1, P_3\}$, we must have $x \in (0,\gamma)$ as otherwise the orderability condition is violated.

  Let $L'$ be the affine line going through $P_1,P_3$.  We first claim that
  \begin{align*}
    h := |P_2 - L'| \leq 2\max_{1 \leq i \leq 3} |P_i - L|.
  \end{align*}
  Indeed, if $L$ is within $h/2$ of $P_1,P_3$ then $|P_2 - L| \geq h/2$, which gives the claim.  Otherwise, $L$ is at least $h/2$ away from one of $P_1$,$P_3$, which again gives the claim.  Thus, it now suffices to prove that
  \begin{align}
    |P_1 - P_2| + |P_2 - P_3| - |P_1 - P_3| \leq C |P_2 - L'|^2. \label{e:mod-euclidean-delta}
  \end{align}

  As the left hand side of \eqref{e:mod-euclidean-delta} is bounded by $2$, if $|P_2 - L'| >\epsilon_0$ for some $\epsilon_0 > 0$ sufficiently small to be determined, then the lemma immediately follows.  Thus, we may assume $|y| = h \leq \epsilon_0$.

  By choosing $\epsilon_0 < \alpha/2$ we get that $x \in (\alpha/2, \gamma-\alpha/2)$.  Note then that $P_2 = (x,y)$.  We have that
  \begin{align*}
    |P_1 - P_2| + |P_2 - P_3| - |P_1 - P_3| &= \sqrt{x^2 + h^2} + \sqrt{(\gamma - x)^2 + h^2} - x - (\gamma - x) \\
    &= \left( \frac{1}{\sqrt{x^2 + h^2} + x} + \frac{1}{\sqrt{(\gamma - x)^2 + h^2} + \gamma - x} \right) h^2 \\
    &\leq \left( \frac{1}{2x} + \frac{1}{2(\gamma - x)} \right) h^2 \\
    &\leq \frac{2}{\alpha} h^2,
  \end{align*}
  where we used the fact that $x \in (\alpha/2,\gamma-\alpha/2)$ in the last inequality.
\end{proof}

The next lemma says that in a metric space, the ordering of points is the same as the ordering of sufficiently close points on a subset isometric to $\R$.

\begin{lemma} \label{l:metric-order}
  Let $L \subseteq X$ be a subset isometric to $\R$ and $p_1,p_2,p_3 \in X$ be points so that $d(p_i,p_j) \geq \alpha \ell$ for some $\ell > 0$ and $p_i \in B(t_i, \alpha \ell/8)$ for points $t_i \in L$.  Then $p_1,p_2,p_3$ are orderable if and only if $t_1, t_2, t_3$ are orderable.
\end{lemma}

\begin{proof}
  By passing to the rescaled metric $\frac{1}{\ell} d$ (which preserves orderability in the original metric), we may assume $\ell = 1$.
  By a triangle inequality argument, we have that
  \begin{align*}
    d(t_i, t_j) \geq d(p_i, p_j) - \frac{\alpha}{4} \geq \frac{3}{4} \alpha.
  \end{align*}

  First assume $t_1,t_2,t_3$ are orderable.  Then
  \begin{align*}
    d(t_1,t_3) \geq \max\{d(t_1,t_2), d(t_2,t_3)\} + \frac{3}{4} \alpha.
  \end{align*}
  Here, we used the fact that $t_1,t_2,t_3$ lie in a subset isometric to $\R$.
  We have again by triangle inequality that
  \begin{align*}
    d(p_1, p_3) &\geq d(t_1,t_3) - \alpha/4, \\
    d(t_1,t_2) & \geq d(p_1, p_2) - \alpha/4, \\
    d(t_2,t_3) & \geq d(p_2, p_3) - \alpha/4.
  \end{align*}
  Thus,
  \begin{align*}
    d(p_1,p_3) \geq \max\{d(t_1,t_2), d(t_2,t_3)\} + \frac{\alpha}{2} \geq \max\{d(p_1,p_2), d(p_2,p_3)\}.
  \end{align*}

  Now assume $p_1,p_2,p_3$ are orderable.  As
  \begin{align*}
    d(t_1, t_3) &\geq d(p_1,p_3) - \alpha/4, \\
    d(p_1,p_2) & \geq d(t_1, t_2) - \alpha/4, \\
    d(p_2,p_3) & \geq d(t_2, t_3) - \alpha/4,
  \end{align*}
  we get that $d(t_1,t_3) \geq \max\{ d(t_1,t_2), d(t_2,t_3)\} - \alpha/2$.  As $d(t_i,t_j) \geq 3\alpha/4$, we must have that $t_2$ is between $t_1$ and $t_3$.
\end{proof}

\begin{lemma} \label{l:proj-order}
  Let $\alpha > 0$ and $p_1,p_2,p_3 \in G$ be orderable points so that $\alpha \ell \leq d_{HS}(p_i, p_j) \leq \ell$ for some $\ell > 0$ whenever $i \neq j$.  Let $L$ be any horizontal line.  If
  $$\max_{i \in \{1,2,3\}} \frac{d_{HS}(p_i,L)}{\ell} < \frac{\alpha}{8},$$
  then $\frac{\alpha}{2} \ell \leq |\pi_1(p_i) - \pi_1(p_j)| \leq \ell$ whenever $i \neq j$ and  $\pi_1(p_1), \pi_1(p_2), \pi_1(p_3)$ are also orderable in $\R^{n_1}$.
\end{lemma}

\begin{proof}
  As usual, we dilate so that $\ell = 1$.  We translate so that $L = \{(tv, 0,...,0) : t \in \R\}$ for some $v \in S^{n_1-1}$.  Then by assumption, there exists $t_1,t_2,t_3 \in \R$ so that $p_i \in (t_iv,0,...,0) \cdot B_G(\alpha/8)$.  We have by a triangle inequality argument that
  \begin{align*}
    |t_i - t_j| \geq d_{HS}(p_i, p_j) - \frac{\alpha}{4} \geq \frac{3}{4} \alpha
  \end{align*}
  whenever $i \neq j$.  Here, we used the fact that $N_{HS}(tv,0,...,0) = |t|$.

  We first prove that $\frac{\alpha}{2} \leq |\pi_1(p_i) - \pi_1(p_j)| \leq 1$.  The upper bound follows from the fact that $\pi_1$ is 1-Lipschitz.  Thus, it suffices to prove the lower bound.  We have by 1-Lipschitzness of $\pi_1$ that
  \begin{align*}
    \max_{i \in \{1,2,3\}} d_{HS}(\pi_1(p_i),\pi_1(t_iv,0,...,0)) < \frac{\alpha}{8},
  \end{align*}
  and so a triangle inequality argument gives
  \begin{align*}
    |\pi_1(p_i) - \pi_1(p_j)| \geq |t_iv - t_jv| - \frac{\alpha}{4} \geq \frac{\alpha}{2}.
  \end{align*}

  We now prove orderability.  As $L$ is isometric to $\R$, we have by Lemma \ref{l:metric-order} that $t_1, t_2, t_3$ are orderable.  As $\pi_1$ is 1-Lipschitz and an isometry when restricted on $L$, we get by another application of Lemma \ref{l:metric-order} that $\pi_1(p_1), \pi_1(p_2), \pi_1(p_3)$ is also orderable.
\end{proof}

The following lemma will allow us to Taylor expand the Hebisch-Sikora norm.
\begin{lemma} \label{l:HS-taylor}
  For any Carnot group $G$ and $\alpha \in (0,1)$, there exists a constant $C > 0$ so that if $\alpha \leq N_{HS}(x_1,...,x_{s-1},0) \leq 1$ and $|y| \leq 1/C$, then
  \begin{align*}
    0 \leq N_{HS}(x_1,...,x_{s-1},y) - N_{HS}(x_1,...,x_{s-1},0) \leq C|y|^2.
  \end{align*}
\end{lemma}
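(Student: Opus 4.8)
The plan is to exploit the fact that the Hebisch--Sikora norm $N = N_{HS}$ is the Minkowski gauge of a \emph{Euclidean} ball $B_{\R^n}(\eta)$, so it sees each layer coordinate only through its squared Euclidean length. Consequently, once the other coordinates are frozen, $N$ becomes, after an elementary change of variables, a smooth function of $|y|^2$, and the first-order term in the perturbation $y \in V_s$ automatically vanishes. (We read the index ``$r$'' in the statement as the top index $s$, i.e.\ $y$ perturbs the top layer.)

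Concretely, for $g \neq 0$ one has $\delta_{1/\lambda}(g) \in B_{\R^n}(\eta)$ iff $\sum_{i=1}^s \lambda^{-2i}|g_i|^2 \le \eta^2$, and since $\lambda \mapsto \sum_i \lambda^{-2i}|g_i|^2$ is continuous, strictly decreasing, and runs from $+\infty$ to $0$, the value $N(g)$ is the unique $\lambda > 0$ with $\sum_{i=1}^s \lambda^{-2i}|g_i|^2 = \eta^2$. Writing $\lambda_0 := N(x_1,\dots,x_{s-1},0) \in (\alpha,1)$ and $\lambda := N(x_1,\dots,x_{s-1},y)$, this yields
\[
  \sum_{i=1}^{s-1}\lambda_0^{-2i}|x_i|^2 = \eta^2, \qquad \sum_{i=1}^{s-1}\lambda^{-2i}|x_i|^2 + \lambda^{-2s}|y|^2 = \eta^2 .
\]
Subtracting, $\sum_{i=1}^{s-1}(\lambda_0^{-2i}-\lambda^{-2i})|x_i|^2 = \lambda^{-2s}|y|^2 \ge 0$; as each $\lambda\mapsto\lambda^{-2i}$ is strictly decreasing and not all $x_i$ vanish (because $\eta^2>0$), this forces $\lambda \ge \lambda_0$, which is exactly the lower bound $N(x_1,\dots,x_{s-1},y) - N(x_1,\dots,x_{s-1},0) \ge 0$.

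For the upper bound, set $\delta := \lambda - \lambda_0 \ge 0$. Since $(x_1,\dots,x_{s-1},y) = (x_1,\dots,x_{s-1},0)\cdot(0,\dots,0,y)$ (the BCH polynomials vanish here as all but the top entry of the second factor are $0$), subadditivity gives the crude a priori bound $\lambda \le \lambda_0 + N(0,\dots,0,y) = \lambda_0 + \eta^{-1/s}|y|^{1/s} < 1 + \eta^{-1/s} =: M$ once $|y|\le 1$, while $\lambda \ge \lambda_0 \ge \alpha$; both $M$ and $\alpha$ are independent of $(x_1,\dots,x_{s-1})$. Choosing $C$ large enough that $|y|\le 1/C$ forces $\lambda^{-2s}|y|^2 \le \alpha^{-2s}|y|^2 \le \eta^2/2$, we get $\sum_{i=1}^{s-1}\lambda^{-2i}|x_i|^2 \ge \eta^2/2$. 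Applying the mean value theorem to $\lambda\mapsto\lambda^{-2i}$ on $[\lambda_0,\lambda]$ gives $\lambda_0^{-2i}-\lambda^{-2i} \ge 2i\lambda^{-2i-1}\delta \ge 2\lambda^{-1}\lambda^{-2i}\delta$ for each $i$, so summing,
\[
  \lambda^{-2s}|y|^2 = \sum_{i=1}^{s-1}(\lambda_0^{-2i}-\lambda^{-2i})|x_i|^2 \ge 2\lambda^{-1}\delta \sum_{i=1}^{s-1}\lambda^{-2i}|x_i|^2 \ge M^{-1}\eta^2\delta ,
\]
whence $\delta \le M\eta^{-2}\lambda^{-2s}|y|^2 \le M\eta^{-2}\alpha^{-2s}|y|^2$. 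Enlarging $C$ to also absorb the earlier smallness requirements on $|y|$ gives the claim.

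The only real point to watch is uniformity of the constants in $(x_1,\dots,x_{s-1})$. This is automatic: the hypothesis $N(x_1,\dots,x_{s-1},0)\in(\alpha,1)$ already confines the $x_i$ to a compact set, and, crucially, keeps $\bigl|\partial_\lambda\sum_i\lambda^{-2i}|x_i|^2\bigr| = \sum_i 2i\lambda^{-2i-1}|x_i|^2 \ge 2\eta^2/\lambda_0 \ge 2\eta^2$ bounded away from $0$ near $\lambda_0$ -- which is precisely the implicit-function-theorem input behind the mean value estimate above. Equivalently, one may phrase the whole argument as: $t \mapsto \lambda(t)$ with $t = |y|^2$ is, by the implicit function theorem, a $C^1$ function near $0$ with $\lambda'(0) = \lambda_0^{-2s}\big/\sum_i 2i\lambda_0^{-2i-1}|x_i|^2$ uniformly bounded, so $\lambda(t)-\lambda(0) = O(t)$ uniformly, which is the assertion.
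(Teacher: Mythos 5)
Your proof is correct and rests on the same idea as the paper's: since the Hebisch--Sikora norm is the gauge of a Euclidean ellipsoid, $N$ depends on the top-layer coordinate only through $|y|^2$, and a first-order expansion in the dilation parameter yields the quadratic bound. The difference is only in execution — you solve the implicit defining equation $\sum_{i}\lambda^{-2i}|x_i|^2+\lambda^{-2s}|y|^2=\eta^2$ and apply the mean value theorem, getting the non-degeneracy directly from $\sum_i\lambda_0^{-2i}|x_i|^2=\eta^2$, whereas the paper verifies the ansatz $\lambda+C|y|^2$ via a Taylor inequality together with the compactness bound $|(x_1,\dots,x_{s-1})|\geq c$; both amount to the same elementary computation.
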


\begin{proof}
  As balls of the Hebisch-Sikora norm centered at the origin are axis-aligned ellipsoids, the left hand inequality is obvious.
  Define $K = \{(x_1,...,x_{s-1},0) : \alpha \leq N_{HS}(x_1,...,x_{s-1},0) \leq 1\}$, which is a compact set.
  Recall that $N_{HS}$ is smooth on $G \setminus \{0\}$.  For $x = (x_1,...,x_{s-1})$, we define the function
  \begin{align*}
    f_x(y) = N_{HS}(x_1,...,x_{s-1},y) - N_{HS}(x_1,...,x_{s-1},0).
  \end{align*}
  We have already proven that $f_x(y) \geq 0$ always.  Furthermore, $f_x(0) = 0$ and so $Df_x(0) = 0$.  Thus, by smoothness of $N_{HS}$, we have that $f_x(y) \leq \|D^2f_x(0)\| |y|^2$ when $|y| \leq \epsilon_x$ where $\|D^2f_x(0)\|$ is the operator norm of the Hessian of $f_x$ at 0.  Here, $\epsilon_x$ is positive for all $x \in K$ and can be chosen in a continuous way.  By compactness of $K$, we get the lemma by choosing
  \begin{align*}
    C = \max \left\{\max_{x \in K} \epsilon_x^{-1}, \max_{x \in K} \|D^2f_x(0)\| \right\}.
  \end{align*}
\end{proof}

The following proposition is the main result of this section.

\begin{proposition}
  For any $\alpha \in (0,1)$ there exists $C > 0$ depending on $d_{HS}$ and the Euclidean structure $|\cdot|$ so that if $p_1,p_2,p_3 \in E$ are orderable points such that $\alpha \ell \leq d_{HS}(p_i,p_j) \leq \ell$ for $i \neq j$ for some $\ell > 0$, then
  \begin{align*}
    d_{HS}(p_1,p_2) + d_{HS}(p_2,p_3) - d_{HS}(p_1,p_3) \leq C \max_{1 \leq i \leq 3} \max_{1 \leq k \leq s} \frac{d_{HS}(\pi_k(p_i),\pi_k(L))^{2k}}{\ell^{2k-1}}
  \end{align*}
  where $L$ is any horizontal line.
\end{proposition}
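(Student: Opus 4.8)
The plan is to dilate so that $\ell=1$ and then bound the excess by a constant multiple of $\eta:=\max_{1\le i\le 3}\max_{1\le k\le s}d_{HS}(\pi_k(p_i),\pi_k(L))^{2k}$, which is the right-hand side at scale $1$. If $\eta$ exceeds a threshold $\eta_0=\eta_0(G,\alpha)$ fixed along the way, the bound is immediate since the excess never exceeds $d_{HS}(p_1,p_2)+d_{HS}(p_2,p_3)\le 2$; so I would assume $\eta<\eta_0$, which forces all three points to be Euclidean-close to $L$.

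First I would apply Lemma \ref{l:euc-ball-contain} to get $p_i\in L\cdot B_{\R^n}(C_0\eta^{1/2})$ for each $i$. After translating so that $L=\{(vt,0,\dots,0):t\in\R\}$ with $v\in S^{n_1-1}$, write $p_i=(vt_i,0,\dots,0)\cdot w_i$ with $|w_i|\le C_0\eta^{1/2}$. The structural observation is that the products $a_{ij}:=p_i^{-1}p_j=w_i^{-1}\cdot(v(t_j-t_i),0,\dots,0)\cdot w_j$ are, coordinatewise, tiny perturbations of their horizontal shadows: expanding the group law (two uses of Lemma \ref{l:BCH-bound}, after a harmless rescaling absorbing that $|v(t_j-t_i)|\le 2$ rather than $\le 1$) shows that every layer-$k$ coordinate of $a_{ij}$ with $k\ge 2$ has Euclidean norm at most $C_1\eta^{1/2}$, while $\pi_1(a_{ij})=\pi_1(p_i)^{-1}\pi_1(p_j)=\pi_1(p_j)-\pi_1(p_i)$ has Euclidean norm in $[\alpha/4,1]$ once $\eta_0$ is small (using $d_{HS}(p_i,p_j)\ge\alpha$ together with the fact that a homogeneous norm is small on Euclidean-small elements).

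The heart of the argument is the Taylor expansion of the Hebisch--Sikora norm. Applying Lemma \ref{l:HS-taylor} successively in the quotient groups $\pi_s(G)=G,\ \pi_{s-1}(G),\dots,\pi_2(G)$ --- legitimate because $N_{HS}$ restricts to a Hebisch--Sikora norm on each $\pi_k(G)$ and agrees there with its value on $G$ on elements supported in the first $k$ layers --- I would peel the coordinates of $a_{ij}$ off one layer at a time, from the top down. At each step the element whose last coordinate is being removed still has first layer of norm $\ge\alpha/4$, so the hypothesis of Lemma \ref{l:HS-taylor} holds after a trivial rescaling, and the removed coordinate has norm $\le C_1\eta^{1/2}\le 1/C$, so each of the $s-1$ steps costs at most $C(C_1\eta^{1/2})^2=O(\eta)$. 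Summing gives
\begin{align*}
0\le d_{HS}(p_i,p_j)-|\pi_1(p_i)-\pi_1(p_j)|\le C_2\eta.
\end{align*}
Feeding this into the three terms of the excess (the two positive corrections and the one negative correction each lie in $[-C_2\eta,C_2\eta]$) reduces the proposition to the purely Euclidean estimate
\begin{align*}
|\pi_1(p_1)-\pi_1(p_2)|+|\pi_1(p_2)-\pi_1(p_3)|-|\pi_1(p_1)-\pi_1(p_3)|\le C_3\eta
\end{align*}
for three points of $\R^{n_1}$ lying within $C_0\eta^{1/2}$ of the line $\R v$ with pairwise distances in $[\alpha/4,1]$. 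Splitting each $\pi_1(p_i)$ into its component along $\R v$ and a transverse part of size $\le C_0\eta^{1/2}$ and using $\sqrt{p^2+q^2}\le p+q^2/(2p)$ on the first two terms and $\sqrt{p^2+q^2}\ge p$ on the third, this excess is $O(\eta/\alpha)$ once $\pi_1(p_2)$ projects between $\pi_1(p_1)$ and $\pi_1(p_3)$ on $\R v$ --- the betweenness supplied by Lemma \ref{l:proj}, which governs exactly the small-excess regime at issue, and which I would fold in as a preliminary reduction. Collecting constants finishes the proof with $C=C(G,\alpha)$.

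The main obstacle, and the genuinely new ingredient advertised in the introduction, is the Taylor-expansion step: one needs the Hebisch--Sikora norm to behave like a $C^{1,1}$ function in the higher layers near a point whose first layer is bounded away from the origin, so that an $O(\eta^{1/2})$ perturbation there moves the norm only by $O(\eta)$; Lemma \ref{l:HS-taylor} provides exactly this, and it is essential to spend it layer-by-layer through the quotients rather than in one shot. A secondary point to watch is that the first-layer perturbations are only $O(\eta^{1/2})$ and affect the norm to first order: these corrections do not cancel term by term, but their net contribution to the excess is second order, which is why one peels down to the first layer and closes with the elementary three-point estimate together with the betweenness from Lemma \ref{l:proj}.
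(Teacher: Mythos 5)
Your proposal is correct and follows essentially the same route as the paper's proof: reduce to $\ell=1$ and to small $\eta$, use Lemma \ref{l:euc-ball-contain} to place the $p_i$ in Euclidean tubes around $L$, use Lemma \ref{l:BCH-bound} to write $p_i^{-1}p_j$ as its first-layer difference plus $O(\eta^{1/2})$ higher coordinates, peel those coordinates off layer by layer with Lemma \ref{l:HS-taylor} to get $d_{HS}(p_i,p_j)=|\pi_1(p_i)-\pi_1(p_j)|+O(\eta)$, and close with the Euclidean three-point estimate. The only deviations are minor: the paper derives the first-layer separation via Lemmas \ref{l:pi-NH-decomp} and \ref{l:nonhorizontal-bound} rather than your direct homogeneous-norm bound, and it cites the final Euclidean estimate as well known, whereas you invoke Lemma \ref{l:proj} to supply the betweenness of the projected middle point -- an ordering issue the paper leaves implicit (and which, strictly speaking, only pertains in the small-excess regime where Lemma \ref{l:proj} indeed applies).
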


\begin{proof}
  We dilate and translate the setting so that $\ell = 1$ and $L = \{(vt,0,...,0)\}_{t \in \R}$ where $v \in S^{n_1-1}$.  Let $\mu > 0$ be a constant to be determined.  Note that the left hand side of the desired inequality is bounded from above by 2.  If the non-constant term of the right hand side is bounded from below by  $\mu$, then the inequality holds with $C = 2/\mu$.  Thus, we may suppose that 
  \begin{align}
    \eta := \max_{1 \leq i \leq 3} \max_{1 \leq k \leq s} d_{HS}(\pi_k(p_i),\pi_k(L))^{2k} \leq \mu \label{e:eta-defn}
  \end{align}
  for some $\mu$ to be determined.  Our goal then is to bound the left hand side of the desired inequality by some multiple of $\eta$.

  Let $P_i = \pi_1(p_i)$.  We will first let $\mu \leq (\alpha/8)^{2s}$ so that we can apply Lemma \ref{l:proj-order} to get that $\alpha/2 \leq |P_i - P_j| \leq 1$ whenever $i \neq j$ and $P_1, P_2,P_3$ are orderable in $\R^{n_1}$.

  We then let $\mu < \eta_0$ of Lemma \ref{l:euc-ball-contain} to get that there exists some $C_0 > 0$ so that $p_i = (t_iv,0,...,0) \cdot u_i$ where $u_i \in B_{\R^n}(C_0\eta^{1/2})$ and $t_1,t_2,t_3 \in \R$.  Thus, $p_i^{-1}p_j = u_i^{-1} \cdot ((t_j-t_i)v,0,...,0) \cdot u_j$.  Note that $P_i = t_iv + \pi_1(u_i) \in \R^{n_1}$.  By repeated applications of Lemma \ref{l:BCH-bound}, we get that
  \begin{align*}
    p_i^{-1}p_j = (P_j - P_i, q_{i,j}^{(2)},...,q_{i,j}^{(s)})
  \end{align*}
  where $\max_{2 \leq k \leq s} |q_{i,j}^{(k)}| \leq C_1\eta^{1/2}$ for some $C_1 > 0$.  

  Let $(x_1,...,x_s) = p_1^{-1}p_2$.  As $|P_1 - P_2| \geq \alpha/2$, we have that $\alpha/2 \leq N_{HS}(x_1,...,x_{s-1},0) \leq 1$.  Let $C$ be the constant from Lemma \ref{l:HS-taylor}.  Recall that
  $$|x_s| = |q^{(s)}_{1,2}| \leq C_1\eta^{1/2} \leq C_1 \mu^{1/2}.$$
  Thus, if we take $\mu$ sufficiently small, we get that $|x_s| < 1/C$.  We can then apply Lemma \ref{l:HS-taylor} to get
  \begin{align*}
    N_{HS}(x_1,...,x_s) - N_{HS}(x_1,...,x_{s-1},0) \leq C |x_s|^2 \leq CC_1^2 \eta.
  \end{align*}
  We repeat this process of bounding
  $$d_{HS}(\pi_i(p_1), \pi_i(p_2)) - d_{HS}(\pi_{i-1}(p_1), \pi_{i-1}(p_2)) = N_{HS}(x_1,...,x_i) - N_{HS}(x_1,...,x_{i-1},0)$$
  in $\pi_i(G)$ by some multiple of $\eta$ and then telescope the resulting bounds to get
  \begin{align*}
    d_{HS}(p_1,p_2) = d_{HS}(\pi_s(p_1),\pi_s(p_2)) \leq d_{HS}(\pi_1(p_1), \pi_1(p_2)) + C_2 \eta = |P_1 - P_2| + C_2 \eta
  \end{align*}
  for some $C_2 > 0$.  Likewise,
  \begin{align*}
    d_{HS}(p_2,p_3) &\leq |P_2 - P_3| + C_2 \eta.
  \end{align*}
  Also, we get from repeated applications of the other inequality of Lemma \ref{l:HS-taylor} that
  \begin{align*}
    d_{HS}(p_1,p_3) &\geq |P_1 - P_3|.
  \end{align*}
  Altogether, we get that
  \begin{align*}
    d_{HS}(p_1,p_2) + d_{HS}(p_2,p_3) - d_{HS}(p_1,p_3) \leq |P_1 - P_2| + |P_2 - P_3| - |P_1 - P_3| + 2C_2 \eta.
  \end{align*}

  Now applying Lemma \ref{l:euc-pythag} gives us that $|P_1 - P_2| + |P_2 - P_3| - |P_1 - P_3| \leq C_3 \eta$ for some $C_3$ depending only on $\alpha$.  Recalling the definition of $\eta$ in \eqref{e:eta-defn}, we get the proposition.
\end{proof}

We immediately get the following corollary.

\begin{corollary}\label{c:our-cor}
  For any $\alpha \in (0,1)$ there exists $C > 0$ depending on $\alpha$, $d_{HS}$, and the Euclidean structure so that the following holds.  If $p_1,p_2,p_3 \in E$ are orderable points such that $\alpha \ell \leq d_{HS}(p_i,p_j) \leq \ell$ for some $\ell > 0$ when $i \neq j$, then
  \begin{align*}
    d_{HS}(p_1,p_2) + d_{HS}(p_2,p_3) - d_{HS}(p_1,p_3) \leq C \hat{\beta}_E(B(p_i,2\alpha\ell))^{2s} \ell
  \end{align*}
  for any $i \in \{1,2,3\}$.
\end{corollary}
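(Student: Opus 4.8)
The plan is to obtain this directly from the preceding proposition, exploiting that the triangle-excess bound there is stated for an \emph{arbitrary} horizontal line $L$. Fix $\alpha$ and let $p_1,p_2,p_3 \in E$ satisfy $\alpha\ell \leq d_{HS}(p_i,p_j) \leq \ell$ for $i\neq j$. First I would fix any index $i_0 \in \{1,2,3\}$ and set $B := B(p_{i_0},2\alpha_2\ell)$ with radius $r := 2\alpha_2\ell$; since $d_{HS}(p_{i_0},p_j) \leq \ell \leq r$ (the relevant regime has $2\alpha_2 \geq 1$), all three points lie in $E\cap B$, so the same argument works for every choice of center $p_{i_0}$. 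Then I would choose a horizontal line $L$ realizing the infimum defining $\hat{\beta}_E(B)$ up to a factor of $2$, so that
\[
\sum_{k=1}^s \left( \frac{1}{r}\,\sup_{z\in E\cap B} d_{HS}(\pi_k(z),\pi_k(L)) \right)^{2k} \leq 2\,\hat{\beta}_E(B)^{2s}.
\]

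The main computation is then a short chain. Applying the preceding proposition to $p_1,p_2,p_3$ with this particular $L$ bounds $d_{HS}(p_1,p_2)+d_{HS}(p_2,p_3)-d_{HS}(p_1,p_3)$ by $C_0\max_{1\le i\le 3}\max_{1\le k\le s} d_{HS}(\pi_k(p_i),\pi_k(L))^{2k}/\ell^{2k-1}$ with $C_0=C_0(\alpha)$. Since each $p_i \in E\cap B$, the numerator of every term is at most $\sup_{z\in E\cap B} d_{HS}(\pi_k(z),\pi_k(L))^{2k}$, and because a single summand is dominated by the whole $\ell_1$-sum, the choice of $L$ makes this at most $2r^{2k}\hat{\beta}_E(B)^{2s}$. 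As $r^{2k}/\ell^{2k-1} = (2\alpha_2)^{2k}\ell \le \max\{1,(2\alpha_2)^{2s}\}\,\ell$ for $1\le k\le s$, each term — hence their maximum — is at most $2\max\{1,(2\alpha_2)^{2s}\}\,\hat{\beta}_E(B)^{2s}\ell$, and multiplying by $C_0$ gives the corollary with $C$ depending only on $\alpha$ and $G$.

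The only bookkeeping point is the metric inside $\hat{\beta}$: since the preceding proposition is phrased in $d_{HS}$, I would take $d$ in the definition of $\hat{\beta}$ to be $d_{HS}$ on each $\pi_k(G)$, which is legitimate because any two homogeneous metrics on a Carnot group (and on each of its quotients) are biLipschitz, so this costs only multiplicative constants. I do not expect any genuine obstacle here: all of the substance — the passage from small stratified distances to containment in Euclidean balls (Lemma \ref{l:euc-ball-contain}), the Taylor expansion of $N_{HS}$ (Lemma \ref{l:HS-taylor}), and the flat Euclidean three-point estimate — already lives in the preceding proposition, and the corollary is merely its reformulation in terms of $\hat{\beta}$-numbers over a ball.
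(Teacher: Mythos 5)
Your proposal is correct and matches the paper, which simply records this corollary as an immediate consequence of the preceding proposition: one picks a near-optimal horizontal line $L$ in the definition of $\hat{\beta}$ of the ball containing $p_1,p_2,p_3$, applies the proposition with that $L$, and absorbs the $r$ versus $\ell$ scaling into the constant, exactly as you do. Your handling of the metric bookkeeping and of the ball's radius is consistent with the intended (if typo-laden) statement, so there is nothing to add.
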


\begin{proof}[Proof of Theorem \ref{th:sufficiency}]
  We may assume $E$ is bounded as otherwise there is nothing to prove.  To prove the theorem, we will use the farthest insertion algorithm of section 3 of \cite{FFP}, which we assume the reader is familiar with.  As a refresher, their construction starts with $\{\Delta_k\}_{k=0}^\infty$, a nested set of finite maximal $2^{-k}$-nets of $E$.  They then build a sequence of connected piecewise geodesic curves $\{\Gamma_k\}_{k=0}^\infty$ where $\Delta_k$ is contained at the endpoints of the geodesic segments (see the bottom of p. 467 of \cite{FFP}) and pass to a limit to get $\Gamma$.

  The construction of \cite{FFP} is in the Heisenberg group and uses the usual $\beta$ (which they call $\beta_\H$).  However, they make no reference to the Heisenberg group except through their Proposition 3.2 and Corollary 2.16 (there is a typo as a square is missing in their $\beta_\H(P_i,C_0t)$).  By their Remark 3.1, their Proposition 3.2 holds for all Carnot groups.  As for their Corollary 2.16, we will replace that with our Corollary \ref{c:our-cor}.  Additionally, in Case A, we use the following bound instead of their (3.3):
  \begin{align*}
    \ell(\Gamma_j) - \ell(\Gamma_{j-1}) \leq C2^{-j} \leq \frac{C}{\epsilon_0^{2s}} 2^{-j} \beta_E(P,C_12^{-j})^{2s} \leq \frac{C}{\epsilon_0^{2s}} 2^{-j} \hat{\beta}_E(P,C_12^{-j})^{2s},
  \end{align*}
  This, along Corollary \ref{c:our-cor}, replaces the presence of $\beta_\H^2$ in their upper bound with $\hat{\beta}^{2s}$.

  One important difference is that \cite{FFP} uses the Carnot-Carath\'eodory metric, but we use $d_{HS}$.  Thus, all nets, balls, etc. will be done with respect to $d_{HS}$.  As $d_{HS}$ is not geodesic, we must modify the method of \cite{FFP}.  We will actually apply the algorithm of \cite{FFP} to inductively build {\it abstract} connected graphs $\tilde{\Gamma}_k$ whose vertices contain $\Delta_k$.  The ``length'' of an abstract graph $\ell(\tilde{\Gamma}_k)$ is the sum of the $d_{HS}$ length of all abstract edges in the graph, and all references to geodesic segments in the algorithm will be replaced by abstract edges.

  The results of \cite{FFP} show that their $\ell(\Gamma_k)$ is bounded their $\beta_\H(E) + \diam(E)$ where $\beta_\H(E)$ is the Carleson integral of $\beta_\H^2$.  Indeed, let $\mathcal{A}_k$ denote all geodesic segments of $\mathcal{A}$ (as defined right before Case B2(i)) that appear in $\Gamma_k$.  Then, using the notation of \cite{FFP}, we can conclude using their equations (3.3), (3.4), (3.10), (3.11) (there is a typo in (3.11) as it should be $\tau_2([Q_1,R_0])$ on the left hand side) that
  \begin{align*}
    \ell(\Gamma_k) &= \sum_{j=0}^{k-1} (\ell(\Gamma_{k+1}) - \ell(\Gamma_k)) + \ell(\Gamma_0) \\
    &\leq C\sum_{j \in \N} \sum_{P \in \Delta_j} \beta_\H(P,C_3 2^{-j})^2 2^{-j} + \sum_{I \in \mathcal{A}_k} \tau_1(I) + \sum_{I \in \mathcal{A}_k} \tau_2(I) + C \diam(E) \\
    &\leq C\sum_{j \in \N} \sum_{P \in \Delta_j} \beta_\H(P,C_3 2^{-j})^2 2^{-j} + \frac{1}{10^6} \ell(\Gamma_k) + \frac{1}{10^3} \ell(\Gamma_k) + C \diam(E),
  \end{align*}
  which implies that $\ell(\Gamma_k) \leq C(\beta_\H(E) + \diam(E))$ for all $k$ as the sum of the $\beta_\H^2$ is just a discretized Carleson integral.  This is similar to their estimate at the top of p. 468.  With the modifications we outlined above, we would get the following uniform bound for the abstract graphs
  \begin{align*}
    \ell(\tilde{\Gamma}_k) \leq C (\diam(E) + \hat{\beta}(E)).
  \end{align*}

  Once the abstract graphs are built, we replace each abstract edge in $\tilde{\Gamma}_k$ with quasi-convex curves using \eqref{eq:qc} to get a sequence of actual continua $\{\Gamma_k\}_{k=0}^\infty$ such that $\Delta_k \subset \Gamma_k$.  Note that the lengths are still uniformly bounded 
  $$\Hd^1(\Gamma_k) \leq C \ell(\tilde{\Gamma}_k) \leq C(\diam(E) + \hat{\beta}(E)).$$

  Finally, as each $\Delta_k$ is finite and so compact and lies in $\Gamma_j$ for $j \geq k$, we get from their Theorem 5.1 and a diagonalization argument that the limiting curve $\Gamma$ of (a subsequence of) the $\Gamma_k$ contains each $\Delta_k$ and $\Hd^1(\Gamma) \leq C(\diam(E)+\hat{\beta}(E))$.  As $\Gamma$ is compact and $\overline{\bigcup_{k=1}^\infty \Delta_k} \supseteq E$, we then get that $E \subseteq \Gamma$.  This proves the theorem.
\end{proof}

\section{Proof of Proposition \ref{p:euc-char}} \label{s:proof}
We first prove the equivalence of (1) and (2).
We prove $(2) \leq (1)$ (up to constants) first.  Let $L$ be a horizontal line so that
\begin{align*}
  \sum_{i=1}^s \sup_{z \in E \cap B(x,r)} \left( \frac{d_{HS}(\pi_i(z), \pi_i(L))}{r} \right)^{2i} \leq 2 \hat{\beta}_E(x,r)^{2s}.
\end{align*}
Then we also have
\begin{align*}
  \sup_{z \in E \cap B(x,r)} \max_{i \in \{1,...,s\}} \left( \frac{d_{HS}(\pi_i(z), \pi_i(L))}{r} \right)^{2i} \leq 2 \hat{\beta}_E(x,r)^{2s}.
\end{align*}
and so Lemma \ref{l:euc-ball-contain} gives that
\begin{align*}
  E \cap B_G(x,r) \subseteq L \cdot \delta_r(B_{\R^n}(C \sqrt{2} \hat{\beta}_E(x,r)^s))
\end{align*}
which gives the needed bound.

We now prove the converse bound.  Suppose that
\begin{align*}
  E \cap B_G(x,r) \subseteq L \cdot \delta_r(B_{\R^n}(\epsilon^s)).
\end{align*}
Note that we can take $\epsilon \leq M$ where $M$ is a constant depending only on $d$ and $|\cdot|$ so that $B_G(0,1) \subseteq B_{\R^n}(M^s)$.

Let $z \in E \cap B(x,r)$.  Then there is a $p \in L$ so that $z \in p \cdot \delta_r(B_{\R^n}(\epsilon^s))$. 
Then Lemma \ref{l:beta-balls} gives
\begin{align*}
  \left(\frac{d_{HS}(\pi_i(z),\pi_i(p))}{r}\right)^i \leq C\epsilon^s, \qquad \forall i \in \{1,...,s\}.
\end{align*}
As $p \in L$ and $z \in E \cap B(x,r)$ was arbitrary, we get for each $i$ that
\begin{align*}
  \sup_{z \in E \cap B(x,r)} \left(\frac{d_{HS}(\pi_i(z),\pi_i(L))}{r}\right)^{2i} \leq C^2\epsilon^{2s}.
\end{align*}
which allows us to conclude that
\begin{align*}
  \hat{\beta}_E(x,r)^{2s} = \sum_{i=1}^s \sup_{z \in B(x,r) \cap E} \left(\frac{d_{HS}(\pi_i(z), \pi_i(L))}{r}\right)^{2i} \leq C^2s \epsilon^{2s}
\end{align*}
as needed.  This proves the equivalence between (1) and (2).


We now establish the equivalence between (2) and (3).  Note that the $\epsilon$ of (2) can be bounded by the same $M$ as above. There is also a similar upper bound for the $\epsilon$ of (3) which depends only on $d$ and $\rho$.  Now, by translating and dilating, we can rewrite (2) as
\begin{align*}
  \inf_L \inf \{ \epsilon > 0 : \delta_{1/r}(x^{-1}E) \cap B_G(0,1) \subset L \cdot B_{\R^n}(\epsilon^s) \}.
\end{align*}
The left-invariance of $\rho$ along with \eqref{e:riem-compare} gives that there exists a $C_0 > 1$ so that
\begin{align*}
  L \cdot B_{\R^n}(\epsilon^s/C_0) \subseteq \{z \in G : \rho(z,L) < \epsilon^s\} \subseteq L \cdot B_{\R^n}(C_0\epsilon^s).
\end{align*}
The equivalence of (2) and (3) now easily follows.
\qed

\begin{bibdiv}
\begin{biblist}

  \bib{BM}{misc}{
    author = {Badger, M.},
    author = {McCurdy, S.},
    title = {Subsets of rectifiable curves in Banach spaces: sharp exponents in Schul-type theorems},
    note = {Preprint},
    year = {2020},
  }

  \bib{BNV}{misc}{
    author = {Badger, M.},
    author = {Naples, L.},
    author = {Vellis, V.},
    title = {H\"older curves and parameterizations in the Analyst's Traveling Salesman theorem},
    note = {Preprint},
    year = {2018},
  }

  \bib{bishop-peres}{book}{
    author = {Bishop, C. J.},
    author = {Peres, Y.},
    title = {Fractals in Probability and Analysis},
    publisher = {Cambridge University Press},
    series = {Cambridge Studies in Advanced Mathematics},
    year = {2016},
  }

  \bib{breuillard}{article}{
    author = {Breuillard, E.},
    title = {Geometry of locally compact groups of polynomial growth and shape of large balls},
    journal = {Groups Geom. Dyn.},
    volume = {8},
    issue = {3},
    pages = {669–-732},
    year = {2014},
  }

  \bib{CLZ}{article}{
    author = {Chousionis, V.},
    author = {Li, S.},
    author = {Zimmerman, S.},
    title = {The traveling salesman theorem on Carnot groups},
    journal = {Calc. Var. PDE},
    note = {accepted}
  }

  \bib{david-schul}{article}{
    author = {David, G. C.},
    author = {Schul, R.},
    title = {The Analyst's traveling salesman theorem in graph inverse limits},
    journal = {Ann. Acad. Sci. Fenn. Math.},
    volume = {42},
    year = {2017},
    pages = {649-692},
  }

  \bib{david-schul-2}{misc}{
    author = {David, G. C.},
    author = {Schul, R.},
    title = {A sharp necessary condition for rectifiable curves in metric spaces},
    note = {Preprint},
  }

  \bib{FFP}{article}{
   author={Ferrari, F.},
   author={Franchi, B.},
   author={Pajot, H.},
   title={The geometric traveling salesman problem in the Heisenberg group},
   journal={Rev. Mat. Iberoam.},
   volume={23},
   number={2},
   year={2007},
   pages={437--480},
  }




  \bib{hahlomaa}{article}{
    author = {Hahlomaa, I.},
    journal = {Adv. Math.},
    title = {Menger curvature and rectifiability in metric spaces},
    volume = {219},
    number = {6},
    pages = {1894-1915},
    year = {2008},
  }

  \bib{hebisch-sikora}{article}{
    Author = {Hebisch, W.},
    author = {Sikora, A.},
    title = {A smooth subadditive homogeneous norm on a homogeneous group},
    journal = {Studia Math.},
    volume = {96},
    number = {3},
    pages = {231-236},
    year =  {1990},
  }

  \bib{Jones-TSP}{article}{
   title={Rectifiable sets and the traveling salesman problem},
   author={Jones, P. W.},
   journal={Invent. Math.},
   volume={102},
   number={1},
   pages={1--15},
   date={1990}
  }

  \bib{juillet}{article}{
   author={Juillet, N.},
   title={A counterexample for the geometric traveling salesman problem in the Heisenberg group}, 
   journal={Rev. Mat. Iberoam.},
   volume={26},
   year={2010},
   number={3},
   pages={1035--1056},
  }

  \bib{li-coarse}{article}{
    author = {Li, S.},
    title = {Coarse differentiation and quantitative nonembeddability for Carnot groups},
    journal = {J. Funct. Anal.},
    volume = {266},
    pages = {4616-4704},
    year = {2014},
  }
  %


  %
  %

  \bib{li-schul-1}{article}{
   title={The traveling salesman problem in the Heisenberg group: upper bounding curvature},
   author={Li, S.}, 
   author={Schul, R.},
   journal={Trans. Amer. Math. Soc.},
   note = {accepted},
  }

  \bib{li-schul-2}{article}{
    author = {Li, S.},
    author = {Schul, R.},
    title = {An upper bound for the length of a traveling salesman path in the Heisenberg group},
    journal = {Rev. Mat. Iberoam.},
    note = {accepted},
  }

  \bib{pajot}{book}{
    title={Analytic Capacity, Rectifiability, Menger Curvature and Cauchy Integral},
    author = {Pajot, H.},
    volume = {1799},
    series = {Lecture Notes in Mathematics},
    publisher = {Springer-Verlag},
    year = {2002},
  }

  \bib{Ok-TSP}{article}{
   title={Characterizations of subsets of rectifiable curves in $\mathbb{R}^n$},
   author={Okikiolu, K.},
   journal={J.~ London Math.~ Soc.~ (2)},
   volume={46},
   date={1992},
   pages={336--348}
  }

  %


  \bib{schul}{article}{
   title={Ahlfors-regular curves in metric spaces},
   author={Schul, R.},
   journal={Ann. Acad. Sci. Fenn. Mat.},
   volume={32},
   date={2007},
   pages={437-460}
  }

  \bib{Schul-TSP}{article}{
   title={Subsets of rectifiable curves in Hilbert space---the Analyst's TSP},
   author={Schul, R.},
   journal={J. Anal. Math.},
   volume={103},
   date={2007},
   pages={331--375}
  }

\end{biblist}
\end{bibdiv}

\end{document}